\documentclass{article}
\usepackage[utf8]{inputenc}

\usepackage[T1]{fontenc}
\usepackage[a4paper,top=3.5cm,bottom=3.5cm,left=3cm,right=3cm,marginparwidth=1.75cm]{geometry}

\usepackage{graphicx}
\usepackage{tikz} 
\usepackage{amsmath} 
\usepackage{amssymb}
\usepackage{amsthm}
\usepackage{fancyhdr}
\usepackage{hyperref}
\usepackage{algpseudocode}
\usepackage{algorithm2e}
\usepackage{soul}
\usepackage{placeins}

\usepackage{caption}
\usepackage{subcaption}
\usepackage{float}
\usepackage{bigints}

\newtheorem{lem}{Lemma}

\newtheorem{remark}{Remark}

\newtheorem{prop}{Proposition}
\theoremstyle{definition}

\usepackage{multirow}
\usepackage{mathtools}
\usepackage{setspace}
\usepackage{tabularx}
\usepackage{array}
\usepackage{epigraph}
\setlength\epigraphwidth{10cm}
\usepackage{dsfont}
\usepackage[noabbrev]{cleveref}

\DeclareMathOperator*{\argmin}{arg\,min}

\newcommand{\rev}[1]{\textcolor{black}{#1}}

\definecolor{codegreen}{rgb}{0,0.6,0}
\definecolor{codegray}{rgb}{0.5,0.5,0.5}
\definecolor{codepurple}{rgb}{0.58,0,0.82}
\definecolor{backcolour}{rgb}{0.95,0.95,0.92}

\setstretch{1.5}
\setcounter{tocdepth}{2}

\pagestyle{empty}

\addtolength{\hoffset}{-1.5cm}
\addtolength{\textwidth}{3.0cm}
\addtolength{\voffset}{-1.75cm}
\addtolength{\textheight}{3cm}
\setlength{\parskip}{0pt}
\setlength{\parindent}{15pt}

\newcommand{\xddots}{%
  \raise 4pt \hbox {.}
  \mkern 6mu
  \raise 1pt \hbox {.}
  \mkern 6mu
  \raise -2pt \hbox {.}
}

\setlength{\parindent}{0in}

\begin{document}
\def\N{{\mathbb N}}
\newcommand{\R}{\mathbb{R}}

\newcommand{\cve}[1]{\textcolor{blue}{\texttt{[VE: #1]}}}
\definecolor{auburn}{rgb}{0.43, 0.21, 0.1}
\newcommand{\fc}[1]{\textcolor{auburn}{\texttt{[FC: #1]}}}
\newcommand{\ve}[1]{\textcolor{blue}{#1}}

\fancyfoot[L]{\color{black}{\Large{C2 - Confidential}}}

\newcommand{\bu}{\boldsymbol{u}}
\newcommand{\bv}{\boldsymbol{v}}
\newcommand{\bphi}{\boldsymbol{\phi}}
\newcommand{\bvarphi}{\boldsymbol{\varphi}}
\newcommand{\bpsi}{\boldsymbol{\psi}}
\newcommand{\bxi}{\boldsymbol{\xi}}
\newcommand{\bzeta}{\boldsymbol{\zeta}}
\newcommand{\btheta}{\boldsymbol{\theta}}
\newcommand{\balpha}{\alpha}
\newcommand{\bw}{\boldsymbol{w}}
\newcommand{\bb}{\boldsymbol{b}}
\newcommand{\bn}{\boldsymbol{n}}
\newcommand{\bId}{\boldsymbol{\rm Id}}
\newcommand{\rL}{\mathrm L}
\newcommand{\rbP}{\boldsymbol{\mathrm P}}
\newcommand{\rS}{\mathrm S}
\newcommand{\bfx}{\boldsymbol{x}}

\newcommand{\BT}[1]{\textcolor{black}{#1}}

\newpage

\pagestyle{fancy}

\lhead{ }
\chead{}
\rhead{}
\lfoot{}
\cfoot{\thepage }
\rfoot{}
\begin{center}

\begin{huge} Elasticity-based morphing technique and application to reduced-order modeling \end{huge} \\

\begin{large} A. Kabalan$^{1,2}$, F. Casenave$^2$, F. Bordeu$^2$, V. Ehrlacher$^{1,3}$, A. Ern$^{1,3}$ \end{large} \\
$^1$ Cermics, Ecole nationale des ponts et chaussées, 6-8 Av. Blaise Pascal, Champs-sur-Marne, 77455 Marne-la-Vallée cedex 2, FRANCE,\\
$^2$ Safran Tech, Digital Sciences \& Technologies, Magny-Les-Hameaux, 78114, FRANCE, \\
$^3$ Inria Paris, 48 rue Barrault, CS 61534, 75647 Paris cedex, FRANCE. \\
 
\end{center}

\hrule

\vspace*{1pt}

\setstretch{1}

\paragraph{Abstract}
The aim of this article is to introduce a new methodology for constructing morphings between shapes that have identical topology. The morphings are obtained by deforming a reference shape, through the resolution of a sequence of linear elasticity equations, onto every target shape. In particular, our approach does not assume any knowledge of a boundary parametrization, \rev{and the computation of the boundary deformation is not required beforehand}. Furthermore, constraints can be imposed on specific points, lines and surfaces in the reference domain to ensure alignment with their counterparts in the target domain after morphing. Additionally, we show how the proposed methodology can be integrated in an offline and online paradigm, which is useful in reduced-order modeling involving variable shapes. This framework facilitates the efficient computation of the morphings in various geometric configurations, thus improving the versatility and applicability of the approach. The \rev{robustness and computational efficiency of the} methodology is illustrated on \rev{two-dimensional test cases, including} the regression problem of the drag and lift coefficients of airfoils of non-parameterized variable shapes.

\hrule
\section{Introduction}
\subsection{Background}
Solving parametric partial differential equations (PDEs) for various values of parameters in a given set is a common task in industrial contexts. Examples of sets of parameters include initial and boundary values, coefficients in the PDE of interest or geometrical parameters of the domain where the PDE is posed. When the evaluation of the PDE solution is computationally expensive, model-order reduction techniques offer an efficient tool to speed up computations while maintaining accuracy.

A common situation encountered in reduced-order modeling is the following: Given a set of parameter values $\mathcal{P} \subset \mathbb{R}^p$ for some $p\in \mathbb{N}^*$, and a physical domain $\Omega_0 \subset \mathbb{R}^d$ for some $d=2,3$, one is interested in quickly computing an approximation of the solution $u_\mu: \Omega_0 \to \mathbb{R}$ for all $\mu \in \mathcal P$ of a given parametric PDE of the form $\mathcal{A}_{\mu}(u_{\mu})=0$ on $\Omega_0$ with $\mathcal A_\mu$ some parameter-dependent differential operator, together with appropriate initial/boundary conditions. Then, the reduced-basis method \cite{quarteroni2015reduced,hesthaven2016certified} involves constructing a low-dimensional approximation space $\mathcal{Z}_r={\rm span}\{ \xi_1,\ldots, \xi_r \}$ of the solution set $\mathcal{U}=\{u_{\mu} :\; \mu \in \mathcal{P} \}$, and then computing an approximation of $u_\mu\in\mathcal{Z}_r$, for instance as a Galerkin approximation of the parametric PDE, thus enabling faster solution computations. 
In practice, efficient reduced-order modeling techniques employ a two-phase procedure. First one performs the offline phase, where the PDE $\mathcal{A}_{\mu}(u_{\mu})=0$ is solved for $u_{\mu}$ for some values of the parameter $\mu \in \mathcal{M}$ using the computationally expensive high-fidelity model (HFM); here, $\mathcal{M}$ is a selected training set.  Subsequently, the reduced space $\mathcal{Z}_r$ can be constructed through approximation algorithms such as the Proper Orthogonal Decomposition (POD) \cite{berkooz1993proper,chatterjee2000introduction} or greedy approaches \cite{quarteroni2015reduced}. The online phase, also known as the exploitation phase, consists in computing approximations of the solution of the PDE belonging to $\mathcal{Z}_r$ for new parameter values. This phase leverages the precomputed reduced-order basis to efficiently compute these approximations. Depending on the complexity of the PDE and its parameter dependence, more advanced strategies such as hyper-reduction~\cite{ryckelynck2009hyper} may be required.

The present work deals with the case where the physical domain also depends on the value of the parameter $\mu \in \mathcal P$. More precisely, for all $\mu \in \mathcal P$, we now consider $\Omega_\mu \subset \mathbb{R}^d$ to be some domain which may depend on $\mu$, and assume that the solution of the parametric PDE is now a function $u_\mu: \Omega_\mu \to \mathbb{R}$. In such a situation, standard algorithms such as POD are not directly applicable, since the solutions $u_{\mu}$ are defined on different domains. The most common solution in the literature on reduced-order modeling with geometric variabilities is to find an appropriate morphing $\boldsymbol{\phi}_{\mu}$ from (or to) a reference geometry $\Omega_0$ to (or from) each parametric domain $\Omega_{\mu}$. In this scenario, the problem can be reformulated on the reference domain $\Omega_0$ and reduced-order modeling techniques are applied to the transformed solution set $\{u_{\mu}\circ \boldsymbol{\phi}_{\mu} : \; \mu \in \mathcal{P}\}$. Such a task is often called a registration problem. Registration problems are also of interest when the domain does not depend on the parameter to achieve efficiency of the reduced-order modeling technique. We refer the reader to \cite{welper2020transformed,cagniart2019model} for some seminal works in this setting.

\subsection{Related works on morphing techniques}

The difficulty now lies on the efficient construction of a morphing $ \boldsymbol{\phi}_{\rev{\mu}}: \Omega_0 \to \Omega_{\rev{\mu}}$ from a reference domain $\Omega_0 \subset \mathbb{R}^d$ to a target domain $\Omega_{\rev{\mu}}\subset \mathbb{R}^d$ that captures the target geometry accurately. Early works on model-order reduction with geometrical variability adopted the use of affine mappings \cite{rozza2008reduced}. However, this approach cannot be applied to general domains with curved boundaries and edges. Other commonly used techniques in computational physics to deform a geometry (or a mesh) onto another are free-form deformation (FFD) \cite{sederberg1986free}, radial basis function (RBF) interpolation \cite{de2007mesh}, linear elasticity/harmonic mesh morphings \cite{baker2002mesh,masud2007adaptive}, nonlinear elasticity \cite{shontz2012robust,froehle2015nonlinear}, only to cite a few. Numerous contributions adopted these strategies in reduced-order modeling contexts \cite{manzoni2017efficient,salmoiraghi2018free,demo2021hull,lehrenfeld2019mass}. However, all these strategies share the assumption that the geometries are parameterized \rev{or} that the deformation of the nodes on the boundary is known. This way, the displacement of the nodes on $\partial\Omega_0$ can be imposed 
to map onto $\partial\Omega_{\rev{\mu}}$, and the extension of the deformation to the whole domain can be determined by means of the chosen method.

In many scenarios, however, an explicit parametrization of the geometry is not available, especially in the online phase. In this situation, constructing a suitable morphing $\boldsymbol{\phi}_{\mu}: \Omega_0 \to \Omega_{\mu}$ becomes more challenging. One possibility often advocated in the literature is a two-step procedure which consists of first finding the deformation of the boundary $\boldsymbol{\phi}_{\mu}(\partial \Omega_0)$, and then leveraging the knowledge of $\boldsymbol{\phi}_{\mu}(\partial \Omega_0)$ to compute $\boldsymbol{\phi}_{\mu}(\Omega_0)$, by using either RBF interpolation \cite{casenave2024mmgp,ye2024data,sieger2014rbf}, mesh parametrization \cite{casenave2024mmgp}, geometry registration \cite{taddei2020registration,taddei2022optimization,taddei2023compositional}, optimal transport~\cite{iollo2022mapping,cucchiara2024model}, \rev{iterative spring analogy \cite{GT2014}}, or some other technique. These approaches require the computation of the boundary morphing before calculating the volume morphing. \rev{However, these approaches suffer from two main drawbacks. First, the boundary morphing is usually case specific, and, to the best of our knowledge, there is no generic way to deform $\boldsymbol{\phi}_{\mu}(\partial \Omega_0)$ onto $\partial \Omega_{\rev{\mu}}$. Second, depending on the case, the above two-step procedure could be expensive, which would make these approaches not suitable for model-order reduction.} Another class of methods \rev{which does not require the a priori knowledge of the boundary morphing} is the LDDMM (Large Deformation Diffeomorphic Metric Mapping) \cite{beg2005computing}. This method finds the morphing from $\Omega_0$ to $\Omega_{\mu}$ as a flow of diffeomorphims solving optimal control problems, but is usually expensive to compute \cite{galarce2022state}.

\subsection{Contribution\rev{s}}
The \rev{first main} contribution of the paper \rev{is a novel method for constructing a morphism} from a reference domain $\Omega_0$ to a target domain $\Omega_{\mu}$ without a priori knowledge on \rev{any parametrization of the geometry of the target domain or its boundary. Moreover, the method proceeds in a single, generic step. Additionally, it is possible to impose certain geometrical features, such as points, lines or surfaces on $\partial\Omega_0$, to be mapped onto some a priori chosen counterparts on $\partial\Omega_{\mu}$. Moreover, the constructed morphing allows for a tangential displacement of the boundary, especially near the target shape, thus reducing distortions. While some of the morphing techniques described in the previous section share some of these features, none of them shares all the features.}

The construction of the morphing proceeds as follows.
Starting from $\Omega_0$, the algorithm produces a sequence of morphisms $(\bphi^{(m)})_{m\geq 0}$ defined on $\Omega_0$ such that $\bphi^{(0)} = \bId|_{\Omega_0}$, where $\bId$ denotes the identity mapping from $\mathbb{R}^d$ onto $\mathbb{R}^d$. For all $m \in \N$, denoting by $\Omega^{(m)} = \bphi^{(m)}(\Omega_0)$, the morphing is updated at iteration $m+1$ as 
\begin{equation}\label{eq:update}
\bphi^{(m+1)} =  \left(\bId|_{\Omega_0} + \gamma^{(m)} \bu^{(m)}\right)\circ \bphi^{(m)},
\end{equation}
where $\bu^{(m)}: \Omega^{(m)} \to \mathbb{R}^d$ is the solution of a linear elasticity problem posed on $\Omega^{(m)}$ and $\gamma^{(m)}>0$ is a user-dependent parameter expected to be small. Notice that (\ref{eq:update}) may be seen as a time-discretization scheme associated with the evolution equation
$$
\partial_t \bphi(t) = \bu(t) \circ \bphi(t),
$$
where $\bu(t): \mathbb{R}^d \to \mathbb{R}^d$ is a time-dependent velocity field. In the linear elasticity problem at iteration m, external forces are applied on the boundary of the current domain $\Omega^{(m)}$ to ensure that the new domain $\Omega^{(m+1)}$ is closer in a certain sense to the target domain $\Omega_{\rev{\mu}}$. The present approach shares similarities with \cite{de2016optimization} but differs in the type of linear elasticity problems that are solved. 

The second main contribution is to embed the above morphing technique in a reduced-order modeling context. Given a collection of domains $\{\Omega_i\}_{1\leq i \leq n}$ for some $n\in \mathbb{N}^*$ which forms the training set, we compute  morphisms $\bphi_i: \Omega_0 \to \Omega_i$ in an offline phase using the algorithm proposed above. Then, we design an efficient online reduced-order model to quickly compute a morphing from the reference domain $\Omega_0$ onto a new target domain outside the training set. The efficiency of the approach is strongly linked to the use of an appropriate initial guess used as a starting point in the iterative online procedure. Finally, we provide numerical evidence that the method produces accurate results when employed in regression-based model-order reduction techniques. 

To sum up, the main contributions of this work are as follows:
\begin{enumerate}
\item A novel morphing technique applicable to non-parametric domains with two main highlights:
\begin{enumerate}
\item it is possible to prescribe a priori the displacement of points and lines on the boundary;
\item the morphing allows for a tangential displacement of the boundary, especially near the target shape, thus reducing distortions.
\end{enumerate}
\item The embedding of the above morphing technique into a multi-query context with variable shapes:
\begin{enumerate}
\item allowing for a computationally efficient algorithm based on a reduced-order modeling with an offline/online decomposition;
\item offering the possibility of learning scalar outputs from simulations realized with variable shapes.
\end{enumerate}
\end{enumerate}
While the present work focuses on the application of morphing to model-order reduction, morphing is an important ingredient in many other areas of application, such as shape optimization \cite{porziani2021automatic}, fluid-structure interaction \cite{shamanskiy2021mesh,wick2011fluid}, model generation \cite{pascoletti2021stochastic}, and healthcare \cite{PCBCZ:19}, only to cite a few examples.

\subsection{Motivating example}

We present in this section an example which motivates the interest of the present methodology in the context of reduced-order modeling. 

Let $d = \rev{2}$ and let $\{\Omega_i\}_{1\leq i \leq n}\subset \R^d$ be a collection of domains in $\R^d$, where a domain in $\R^d$ is understood as an open bounded connected subset of $\R^d$ with piecewise smooth boundary. Assume that all the domains share the same topology. Let $\Omega_0 \subset \R^d$ be a fixed reference domain that shares the same topology as well. The collection $\{\Omega_i\}_{1\leq i \leq n}$ is referred to as the training set of target domains. 
 \begin{figure}[ht] 
\begin{center}
\includegraphics[scale=0.4]{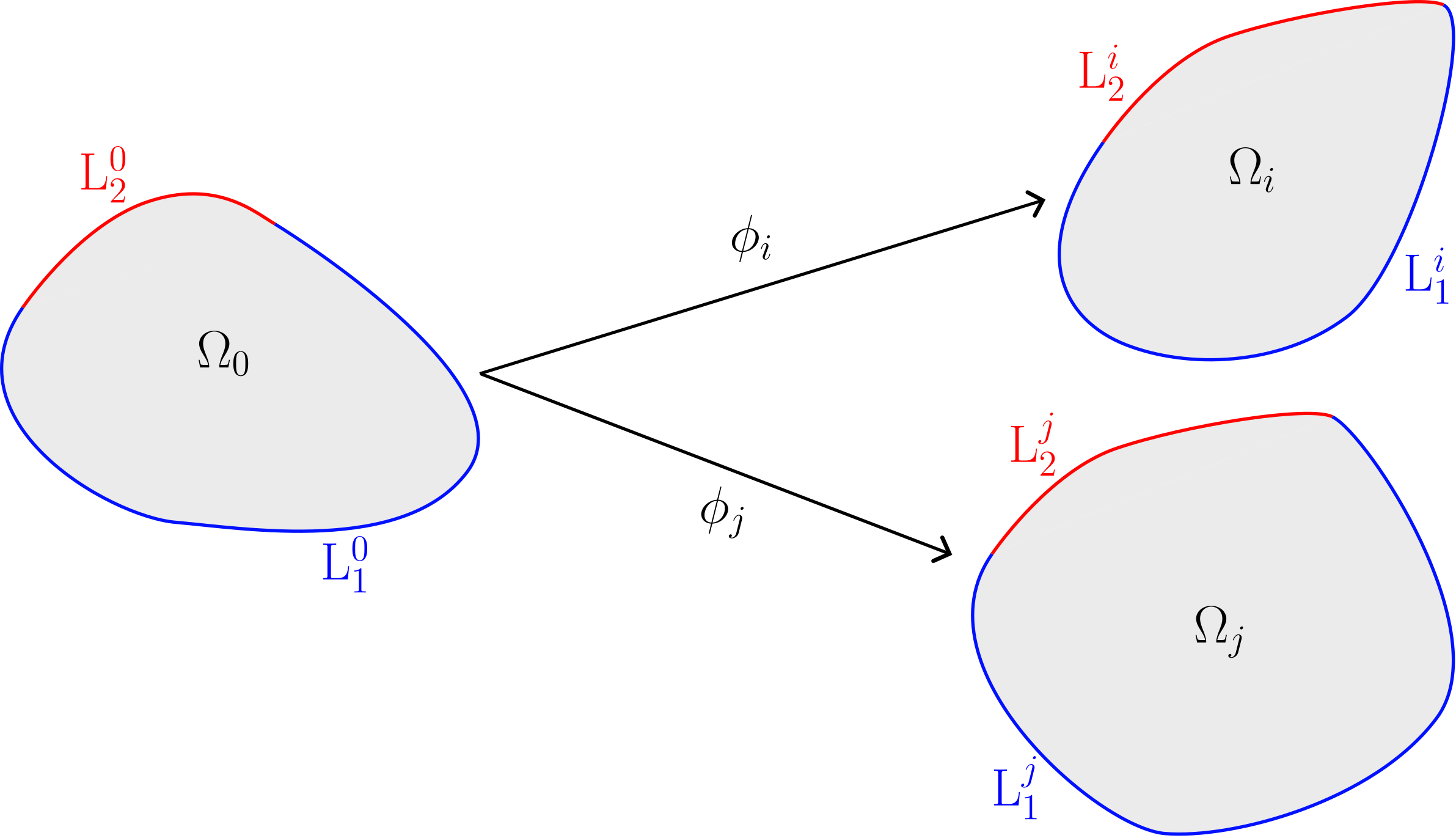}
\end{center}
\caption{Reference domain $\Omega_0$ with two samples $\Omega_i$ and $\Omega_j$ from the target dataset.}
\end{figure}
Assume now that one is actually interested in solving, for all $i\in \{1,\ldots,n\}$, the following parametric (elliptic) PDE with mixed boundary conditions: 
$$
\left\{
\begin{array}{ll}
\mathcal{L}_{\mu_i}(u_{\mu_i })=0 & \text{ in }  \Omega_i, \\
u_{\mu_i}= a & \mbox{ on } L_1^i,\\
\nabla u_{\mu_i} \cdot \bn_i= b & \mbox{ on }{L_2^i},\\
\end{array}
\right.
$$
where $\bn_i$ is the unit normal outward vector to $\Omega_i$, $\mu_i \in \mathcal M$ belongs to the set of parameter values, $u_{\mu_i }: \Omega_i \to \R$ is the solution to the PDE problem of interest, $a,b \in \R$, and ${L_1^i}$ and ${L_2^i}$ are open subsets of $\partial \Omega_i$ which form a partition of $\partial \Omega_i$. In other words, Dirichlet boundary conditions are enforced on $L_1^i$, whereas Neumann boundary conditions are enforced on $L_2^i$. Moreover, one wishes to construct a reduced-order modeling technique to quickly obtain numerical approximations of the problems above. 

Since, for all $1\leq i \leq n$, each solution $u_{\mu_i }$ is defined on a different domain, traditional dimensionality reduction methods such as POD are not directly applicable. One possibility is to rely on so-called registration methods to find a morphing $\bphi_i: \Omega_0 \to \Omega_i$, and then apply POD on the family of functions $\{u_{\mu_i }\circ \bphi_i\}_{1\leq i \leq n}$. Moreover, we want to ensure that $\bphi_i(L_1^0)=L_1^i$ and $\boldsymbol{\phi}_i(L_2^0)=L_2^i$, with $\partial \Omega_0= L_1^0 \bigcup L_2^0$. In this case, the boundary conditions $u_{\mu_i }\circ \boldsymbol{\phi}_i|_{L_1^0}= a$ and  $(\nabla u_{\mu_i} \cdot \bn_i)\circ \boldsymbol{\phi}_i|_{L_2^0}= b$  are satisfied, and the dimensionality reduction problem is expected to be simpler.

\subsection{Outline of the paper}
In Section~\ref{Section Offline}, we present the (offline) methodology to construct a morphing from a reference domain $\Omega_0$ onto a target domain $\Omega$ while respecting certain conditions on the morphing of the boundary. In Section~\ref{Section Online}, we show how, given a training dataset of geometries $\{\Omega_i\}_{1\leq i \leq n}$, we can reduce the complexity of the problem of finding a morphing for a given domain outside the training dataset, so that the method can be efficient in the online phase. In both sections, we provide numerical examples to illustrate the behavior of the proposed methods. In Section~\ref{learning scalars}, we present an application of the proposed morphing strategy to learn scalar outputs from simulations realized on different (non-parameterized) geometries. As an example, we predict the drag coefficient of airfoils of non-parameterized variable shapes. Finally, in Section~\ref{Conclusion}, we provide a brief summary and some concluding remarks. 

\section{ High-fidelity morphing construction} \label{Section Offline}
In this section, we present the new high-fidelity methodology to construct a morphism $\bphi: \Omega_0 \to \Omega$ between a reference domain $\Omega_0 \subset \mathbb{R}^d$ and a target domain $\Omega \subset \R^d$. In the context of model-order reduction with geometric variability, this approach is applied in the offline phase (see Section \ref{Section Online}). In what follows, we denote by $\|\cdot\|$ the Euclidean norm of $\mathbb{R}^d$. Moreover, we use boldface notation for vectors in $\R^d$, fields taking values in $\R^{d}$, and sets and linear spaces composed of such fields.

\subsection{Notation and preliminaries}
Let $\Omega_0$ and $\Omega$ be domains in $\mathbb{R}^d$. For the sake of simplicity, we present the methodology in the case $d=2$. 

Let $N_p, N_l\in \N^*$. Let $\left\{\rbP_1, \ldots , \rbP_{N_p}\right\} \subset \partial \Omega$ be a collection of $N_p$ distinct points of $\partial \Omega$, and $\left\{L_1, \ldots , L_{N_l}\right\} \subset \partial \Omega$ be a collection of disjoint, open, connected subdomains of $\partial \Omega$ with positive $1$-dimensional Hausdorff measure such that $\bigcup_{k = 1}^{N_l} \overline{L_{k}} = \partial \Omega$, where $\overline{L_{k}}$ denotes the closure of $L_{k}$. Similarly, we consider a collection $\left\{\rbP_1^0, \cdots , \rbP^0_{N_p}\right\} \subset \partial \Omega_0 $  of $N_p$ distinct points of $\partial \Omega_0 $ and a collection $\left\{L_1^0, \cdots , L^0_{N_l} \right\} \subset \partial \Omega_0 $ of disjoint, open, connected subdomains of $\partial \Omega_0 $ with positive $1$-dimensional Hausdorff measure such that $\bigcup_{k = 1}^{N_l} \overline{L^0_{k}} = \partial \Omega_0$.
 Our goal is to build a morphing such that each line $L^0_k$ (resp., point $\rbP^0_k$) in $\partial\Omega_0$ is mapped to the corresponding line $L_k$ (resp., point $\rbP_k$) in $\partial\Omega$. 
 
Let us introduce the set $ \boldsymbol{\mathcal{T}}_{\Omega_0 }:=\{ \boldsymbol{\phi} \in \boldsymbol{W}^{1,\infty}(\Omega_0):\;  \boldsymbol{\phi} \text{ is injective}, \: \boldsymbol{\phi}^{-1}\in \boldsymbol{W}^{1,\infty}(\boldsymbol{\phi}(\Omega_0))  \}$.  We wish to find a morphing $\boldsymbol{\phi} \in \boldsymbol{\mathcal{T}}_{\Omega_0}$ such that
\begin{subequations} 
\label{mapping conditions}
\begin{alignat}{2}
    &\boldsymbol{\phi}(\Omega_0)=\Omega ,   \label{domain mapping}\\
    & \boldsymbol{\phi}(\rbP_{k}^0)=\rbP_{k} , \quad && \forall 1\leq k \leq N_p ,  \label{points mapping}\\
    & \boldsymbol{\phi}(L^0_{k})=L_{k} , \quad && \forall 1\leq k \leq N_l.  \label{curve mapping}
    \end{alignat}
\end{subequations}
Our aim here is to propose a new iterative method to construct a morphing $\boldsymbol{\phi}\in \boldsymbol{\mathcal{T}}_{\Omega_0 }$ such that conditions \eqref{domain mapping}-\eqref{points mapping}-\eqref{curve mapping} are satisfied at convergence. The rest of the section is organized as follows. First, in Section~\ref{sec:math_setting}, we collect some auxiliary mathematical results to justify the relevance of the proposed approach. Then, in Section~\ref{shape matching opti}, we propose a first approach, inspired from~\cite{de2016optimization}, to construct a morphing satisfying only the requirement~\eqref{domain mapping}. Finally, in Section~\ref{Shape matching with tags}, we \rev{present the main approach to construct the morphing so that all the constraints in~\eqref{mapping conditions} are taken into consideration.}

\subsection{Mathematical setting} \label{sec:math_setting}

This section collects some auxiliary mathematical results, most of which are classical. For the sake of completeness, we recall some proofs. We start with a classical lemma (see~\cite[Lemma~6.13]{allaire2007conception}).
\begin{lem} \label{variation of phi}
    Let $\boldsymbol{\phi} \in \boldsymbol{\mathcal{T}}_{\Omega_0}$. Define the set $\boldsymbol{\mathcal{T}}'_{\boldsymbol{\phi}, 1}:=\{\bv \circ \boldsymbol{\phi} \; | \;  \bv \in \boldsymbol{W}^{1,\infty}(\bphi(\Omega_0)),\; \|\bv\|_{\boldsymbol{W}^{1,\infty}(\bphi(\Omega_0))} < 1 \}\subset \boldsymbol{W}^{1,\infty}(\Omega_0)$. Then, for all $\boldsymbol{\xi} \in \boldsymbol{\mathcal{T}}'_{\boldsymbol{\phi}, 1}$, we have $\boldsymbol{\phi} + \boldsymbol{\xi} \in \boldsymbol{\mathcal{T}}_{\Omega_0}$.
\end{lem}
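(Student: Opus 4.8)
The plan is to reduce the statement to a perturbation-of-the-identity argument by exploiting the multiplicative structure of the perturbation. The starting observation is that, for $\bxi = \bv \circ \bphi \in \boldsymbol{\mathcal{T}}'_{\bphi,1}$, one has the factorization
\[
\bphi + \bxi = \bphi + \bv\circ\bphi = (\bId + \bv)\circ \bphi.
\]
Thus, since $\bphi \in \boldsymbol{\mathcal{T}}_{\Omega_0}$ already, it suffices to prove two things: (i) that $\bId + \bv$ is itself a bi-Lipschitz homeomorphism of $\bphi(\Omega_0)$ onto an open set, with $\boldsymbol{W}^{1,\infty}$ inverse; and (ii) that the class $\boldsymbol{\mathcal{T}}_{\Omega_0}$ is stable under post-composition by such a map.

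First I would establish (i). Writing $\Omega' := \bphi(\Omega_0)$, the hypothesis $\|\bv\|_{\boldsymbol{W}^{1,\infty}(\Omega')} < 1$ gives in particular $\|\nabla \bv\|_{L^\infty(\Omega')} \le \|\bv\|_{\boldsymbol{W}^{1,\infty}(\Omega')} =: L < 1$, so that (using that on the domains considered $\boldsymbol{W}^{1,\infty}$ coincides with the space of Lipschitz fields) $\bv$ is $L$-Lipschitz. The reverse triangle inequality then yields, for all $y_1, y_2 \in \Omega'$,
\[
(1-L)\,\|y_1 - y_2\| \;\le\; \|(\bId+\bv)(y_1) - (\bId+\bv)(y_2)\| \;\le\; (1+L)\,\|y_1 - y_2\|.
\]
The lower bound shows $\bId + \bv$ is injective and that its inverse (defined on the image) is Lipschitz with constant $\le (1-L)^{-1}$; the upper bound shows $\bId + \bv$ is Lipschitz. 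By Rademacher's theorem a Lipschitz map has an essentially bounded weak Jacobian, so both $\bId+\bv$ and its inverse lie in $\boldsymbol{W}^{1,\infty}$. That the image $(\bId+\bv)(\Omega')$ is open — so that the inverse is defined on a genuine domain — follows from invariance of domain applied to the continuous injective map $\bId + \bv$.

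It remains to prove (ii), i.e. that $\bpsi := (\bId + \bv)\circ\bphi$ lies in $\boldsymbol{\mathcal{T}}_{\Omega_0}$. Injectivity is immediate as a composition of injective maps. Membership in $\boldsymbol{W}^{1,\infty}(\Omega_0)$ follows because a composition of Lipschitz fields is Lipschitz, or equivalently by the chain rule $\nabla \bpsi = \big(\nabla(\bId + \bv)\circ\bphi\big)\,\nabla\bphi \in L^\infty$. Finally $\bpsi^{-1} = \bphi^{-1}\circ(\bId+\bv)^{-1}$ is a composition of the two $\boldsymbol{W}^{1,\infty}$ inverses produced above, hence again Lipschitz and in $\boldsymbol{W}^{1,\infty}(\bpsi(\Omega_0))$; this is where the hypothesis $\bphi \in \boldsymbol{\mathcal{T}}_{\Omega_0}$ is used in an essential way. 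Combining (i) and (ii) gives $\bphi + \bxi \in \boldsymbol{\mathcal{T}}_{\Omega_0}$.

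The delicate points are not the contraction estimate, which is elementary, but the two facts that make $\boldsymbol{W}^{1,\infty}$ behave like the Lipschitz class: first, the identification of $\boldsymbol{W}^{1,\infty}$ with Lipschitz maps (which needs the domains to be quasiconvex, as is guaranteed here by the piecewise-smooth boundary assumption), and second, the openness of $(\bId+\bv)(\Omega')$ via invariance of domain, needed so that the inverse belongs to $\boldsymbol{W}^{1,\infty}$ of an open set rather than of an arbitrary subset. I would expect the composition and chain-rule bookkeeping for the inverses to be the most error-prone step to write out carefully, whereas the geometric heart of the argument is the single contraction estimate displayed above.
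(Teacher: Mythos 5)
Your proof is correct and takes essentially the same route as the paper's: the same factorization $\bphi+\bxi=(\bId+\bv)\circ\bphi$, the same perturbation-of-the-identity argument giving injectivity of $\bId+\bv$ from $\|\bv\|_{\boldsymbol{W}^{1,\infty}(\bphi(\Omega_0))}<1$, and the same conclusion by composition of injective maps. The paper's proof is in fact terser than yours, simply asserting the bijectivity of $\bId+\bv$ (in the spirit of the cited lemma of Allaire) without spelling out the contraction estimate, the openness of the image, or the $\boldsymbol{W}^{1,\infty}$ regularity of the inverse, all of which you verify explicitly.
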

\begin{proof}
Let $\boldsymbol{\phi} \in \boldsymbol{\mathcal{T}}_{\Omega_0}$, $\boldsymbol{\xi} \in \boldsymbol{\mathcal{T}}'_{\boldsymbol{\phi},1}$ such that $ \boldsymbol{\xi} = \bv \circ \boldsymbol{\phi}$ for some $\bv \in \boldsymbol{W}^{1, \infty}(\bphi(\Omega_0))$ with $\|\bv\|_{\boldsymbol{W}^{1,\infty}(\bphi(\Omega_0))} < 1$. Then, $\boldsymbol{\phi} + \boldsymbol{\xi} = \boldsymbol{\phi} + \boldsymbol{v} \circ \boldsymbol{\phi} = (\bId+\boldsymbol{v})\circ \boldsymbol{\phi}$. Since $\|\bv\|_{\boldsymbol{W}^{1,\infty}(\bphi(\Omega_0))} < 1$, $(\bId+\bv)$ is bijective from $\Omega_0$ onto $\boldsymbol{\phi}(\Omega_0)$, so that $\boldsymbol{\phi} + \boldsymbol{\xi} =  (\bId+\bv)\circ \boldsymbol{\phi}$ is injective, as the composition of two injective maps. Hence, $\boldsymbol{\phi} + \boldsymbol{\xi} \in \boldsymbol{\mathcal{T}}_{\Omega_0}$.
\end{proof}
This lemma proves in particular that the set $\boldsymbol{\mathcal{T}}'_{\boldsymbol{\phi}}:=\{\bv \circ \boldsymbol{\phi} | \;  \bv \in \boldsymbol{W}^{1,\infty}(\bphi(\Omega_0))\}$ is included in the tangential space of $\boldsymbol{\mathcal T}_{\Omega_0}$ at point $\bphi$.

The following proposition is classical in topology optimization, see, e.g., \cite{allaire2007conception,allaire2004structural} for a proof.
\begin{prop} \label{prop Diff J}
Let $g \in H^1_{{\rm loc}}(\R^d)$, let $\boldsymbol{\phi} \in \boldsymbol{\mathcal{T}}_{0}$, $\boldsymbol{\psi} \in  \boldsymbol{\mathcal{T}}'_{\boldsymbol{\phi}}$, and let $\bn_{\boldsymbol{\phi}}$ be the outward unit normal vector to $\boldsymbol{\phi}(\partial \Omega_0)$. Then the differential of $J_g$ at $\boldsymbol{\phi}$ evaluated in the direction $\boldsymbol{\psi}$ is
\begin{align} \label{diff J}
    DJ_g(\boldsymbol{\phi})(\boldsymbol{\psi})= \int_{\boldsymbol{\phi}(\partial\Omega_0)} g(\boldsymbol{x}) \boldsymbol{\psi} \circ \boldsymbol{\phi}^{-1}(\boldsymbol{x})\cdot  \bn_{\boldsymbol{\phi}} ds.
\end{align}
\end{prop}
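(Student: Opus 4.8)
The plan is to identify $J_g$ with the volume functional $J_g(\bphi)=\int_{\bphi(\Omega_0)}g(\bfx)\,d\bfx$ and to compute its Gâteaux derivative along $\bpsi$ by transporting the perturbed integral back onto the fixed domain $\bphi(\Omega_0)$, i.e.\ by the classical Hadamard/change-of-variables argument. Since $\bpsi\in\boldsymbol{\mathcal{T}}'_{\bphi}$, I would first write $\bpsi=\bv\circ\bphi$ for some $\bv\in\boldsymbol{W}^{1,\infty}(\bphi(\Omega_0))$, so that $\bphi+t\bpsi=(\bId+t\bv)\circ\bphi$; for $t$ small enough, Lemma~\ref{variation of phi} guarantees $\bphi+t\bpsi\in\boldsymbol{\mathcal{T}}_{\Omega_0}$ and gives the image $(\bphi+t\bpsi)(\Omega_0)=(\bId+t\bv)(\bphi(\Omega_0))$. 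The key reduction is therefore that perturbing $\bphi$ in the direction $\bpsi$ amounts to deforming the current domain $\bphi(\Omega_0)$ by the velocity field $\bv$, which recasts the problem as a standard domain-integral shape derivative on a fixed reference configuration.

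Next I would perform the change of variables $\bfx=\boldsymbol{y}+t\bv(\boldsymbol{y})$ to pull the perturbed integral back onto $\bphi(\Omega_0)$, obtaining
\[
J_g(\bphi+t\bpsi)=\int_{\bphi(\Omega_0)}g\bigl(\boldsymbol{y}+t\bv(\boldsymbol{y})\bigr)\,\bigl|\det\!\bigl(\bId+t\nabla\bv(\boldsymbol{y})\bigr)\bigr|\,d\boldsymbol{y}.
\]
I would then differentiate under the integral sign at $t=0$, using the first-order expansions $g(\boldsymbol{y}+t\bv)=g(\boldsymbol{y})+t\,\nabla g\cdot\bv+o(t)$ and $\det(\bId+t\nabla\bv)=1+t\,\mathrm{div}(\bv)+o(t)$ (the latter from Jacobi's formula $\frac{d}{dt}\det(\bId+tA)|_{t=0}=\mathrm{tr}\,A$ together with $\mathrm{tr}(\nabla\bv)=\mathrm{div}(\bv)$; note that the Jacobian stays positive for small $t$, so the absolute value is harmless). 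Collecting the first-order terms and recognizing $\nabla g\cdot\bv+g\,\mathrm{div}(\bv)=\mathrm{div}(g\bv)$ yields the volume form of the derivative,
\[
DJ_g(\bphi)(\bpsi)=\int_{\bphi(\Omega_0)}\mathrm{div}\bigl(g\,\bv\bigr)\,d\boldsymbol{y}.
\]
Applying the divergence theorem on $\bphi(\Omega_0)$, whose boundary is $\bphi(\partial\Omega_0)$ with outward unit normal $\bn_{\bphi}$, and finally substituting back $\bv=\bpsi\circ\bphi^{-1}$, produces exactly the claimed boundary expression \eqref{diff J}.

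The main obstacle is not the formal computation but its rigorous justification under the stated low regularity $g\in H^1_{\mathrm{loc}}(\R^d)$ and $\bphi,\bphi^{-1}\in\boldsymbol{W}^{1,\infty}$. Differentiation under the integral sign must be justified, e.g.\ by dominated convergence, exploiting that $\bv\in\boldsymbol{W}^{1,\infty}$ gives uniform Lipschitz control of the integrand and of its difference quotients on the fixed bounded domain $\bphi(\Omega_0)$; moreover the composition $g(\boldsymbol{y}+t\bv)$ must be treated through the $H^1$ translation/chain rule rather than pointwise, since $\nabla g$ exists only in $L^2_{\mathrm{loc}}$. Likewise, the divergence theorem applied to $g\bv$ requires $g\bv\in\boldsymbol{W}^{1,1}$ near the boundary, which follows from $g\in H^1_{\mathrm{loc}}$ and $\bv\in\boldsymbol{W}^{1,\infty}$ combined with the piecewise-smoothness of $\partial\Omega_0$; this is precisely where the $H^1_{\mathrm{loc}}$ hypothesis (rather than mere continuity of $g$) is genuinely used, as it is what makes the trace of $g$ on $\bphi(\partial\Omega_0)$ and the resulting boundary integral well defined. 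Since the identity is classical in shape optimization, I would invoke \cite{allaire2007conception,allaire2004structural} for these technical points and keep the emphasis on the change-of-variables reduction and the divergence theorem.
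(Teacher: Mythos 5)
Your proposal is correct: the paper itself gives no proof of Proposition~\ref{prop Diff J}, stating only that the result is classical in shape/topology optimization and citing \cite{allaire2007conception,allaire2004structural}, and your argument (write $\bpsi=\bv\circ\bphi$, reduce to a perturbation of the domain $\bphi(\Omega_0)$ by the velocity field $\bv$, change variables, expand the Jacobian via Jacobi's formula, and conclude with the divergence theorem applied to $g\bv$) is precisely the classical Hadamard-type proof contained in those references. Your attention to the low-regularity issues ($H^1_{\rm loc}$ differentiation in the $L^2$ sense, $W^{1,1}$ divergence theorem on Lipschitz boundaries) goes slightly beyond what the paper records, but is consistent with its framework.
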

We define, for all $\bv\in \boldsymbol{W}^{1,\infty}(\bphi(\Omega_0))$, 
$$
 \widetilde{DJ_g}(\boldsymbol{\phi})(\bv)= \int_{ \boldsymbol{\phi}(\partial\Omega_0)} g(\boldsymbol{x}) \bv(\boldsymbol{x})\cdot  \bn_{\boldsymbol{\phi}} ds,  
$$
so that 
$$
 DJ_g(\boldsymbol{\phi})(\bv \circ \bphi) =  \widetilde{DJ_g}(\boldsymbol{\phi})(\bv). 
$$
The proof of the following lemma is immediate using the trace theorem and the fact that $g\in H^1_{\rm loc}(\mathbb{R}^d)$. 

\begin{lem} 
    Let $\boldsymbol{\phi} \in \boldsymbol{\mathcal{T}}_{\Omega_0}$. The linear functional $\widetilde{DJ_g}(\boldsymbol{\phi}): \boldsymbol{W}^{1,\infty}(\bphi(\Omega_0)) \to \R $ can be uniquely extended to a continuous linear form defined on the space $\boldsymbol{H}^1(\bphi(\Omega_0))$.
\end{lem}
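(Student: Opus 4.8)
The plan is to bound the boundary integral defining $\widetilde{DJ_g}(\bphi)$ by the $\boldsymbol{H}^1$-norm using the trace theorem, and then to invoke the density of $\boldsymbol{W}^{1,\infty}(\bphi(\Omega_0))$ in $\boldsymbol{H}^1(\bphi(\Omega_0))$ together with the standard bounded-linear-extension argument. Write $\Omega' := \bphi(\Omega_0)$. Since $\bphi \in \boldsymbol{\mathcal{T}}_{\Omega_0}$ is bi-Lipschitz (both $\bphi$ and $\bphi^{-1}$ lie in $\boldsymbol{W}^{1,\infty}$) and $\Omega_0$ has piecewise smooth, hence Lipschitz, boundary, the image $\Omega'$ is a bounded Lipschitz domain, so the trace theorem applies on $\partial\Omega'$. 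Moreover, because $g \in H^1_{\rm loc}(\R^d)$ and $\Omega'$ is bounded, the restriction $g|_{\Omega'}$ belongs to $H^1(\Omega')$, and $\|g\|_{H^1(\Omega')}$ is a fixed finite quantity.

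The first step is the estimate. By the trace theorem for Lipschitz domains, the trace of $g$ on $\partial\Omega'$ lies in $L^2(\partial\Omega')$ and satisfies $\|g\|_{L^2(\partial\Omega')} \le C\,\|g\|_{H^1(\Omega')}$, and likewise every $\bv \in \boldsymbol{H}^1(\Omega')$ has a trace with $\|\bv\|_{\boldsymbol{L}^2(\partial\Omega')} \le C\,\|\bv\|_{\boldsymbol{H}^1(\Omega')}$, where $C$ depends only on $\Omega'$. Since $\bn_{\bphi}$ is a unit normal, $\|\bn_{\bphi}\| = 1$ almost everywhere on $\partial\Omega'$, so by Cauchy--Schwarz
\[
\left| \widetilde{DJ_g}(\bphi)(\bv) \right| \le \int_{\partial\Omega'} |g|\,\|\bv\|\, ds \le \|g\|_{L^2(\partial\Omega')}\,\|\bv\|_{\boldsymbol{L}^2(\partial\Omega')} \le C^2\,\|g\|_{H^1(\Omega')}\,\|\bv\|_{\boldsymbol{H}^1(\Omega')}.
\]
As $\|g\|_{H^1(\Omega')}$ is a fixed constant, this shows that $\widetilde{DJ_g}(\bphi)$ is continuous on $\boldsymbol{W}^{1,\infty}(\Omega')$ for the topology induced by the $\boldsymbol{H}^1(\Omega')$-norm.

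Finally, since $\Omega'$ is Lipschitz, smooth functions up to the boundary are dense in $\boldsymbol{H}^1(\Omega')$ and are contained in $\boldsymbol{W}^{1,\infty}(\Omega')$, so $\boldsymbol{W}^{1,\infty}(\Omega')$ is itself dense in $\boldsymbol{H}^1(\Omega')$. A continuous linear functional defined on a dense subspace admits a unique continuous extension to the whole space (with the same operator norm), which yields the claimed extension of $\widetilde{DJ_g}(\bphi)$ to $\boldsymbol{H}^1(\Omega')$; uniqueness is immediate from density. The only point requiring attention — and what I would regard as the main, though entirely routine, obstacle — is justifying that $\partial\Omega'$ is regular enough for both the trace theorem and the density statement; this is precisely what the bi-Lipschitz regularity encoded in the definition of $\boldsymbol{\mathcal{T}}_{\Omega_0}$ guarantees.
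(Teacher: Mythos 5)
Your proof is correct and follows exactly the route the paper indicates: the paper states only that the result is ``immediate using the trace theorem and the fact that $g\in H^1_{\rm loc}(\mathbb{R}^d)$,'' and your argument (trace estimate via Cauchy--Schwarz, then density of $\boldsymbol{W}^{1,\infty}$ in $\boldsymbol{H}^1$ and the unique bounded extension) is precisely that argument written out in full. The only soft spot, which you yourself flag and which the paper glosses over as well, is that a bi-Lipschitz image of a Lipschitz domain is not automatically Lipschitz, so strictly speaking the trace and density statements on $\bphi(\Omega_0)$ should be obtained by transporting them from $\Omega_0$ through the bi-Lipschitz change of variables rather than by claiming Lipschitz regularity of the image boundary.
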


The following proposition is at the heart of the new method we propose in the present work. 
\begin{prop} \label{Prop inner product} Assume that $d=2$. Let  $\alpha >0$ and let $\boldsymbol{\phi} \in \boldsymbol{\mathcal{T}}_{\Omega_0}$ be such that
\begin{itemize}
\item $\boldsymbol{\phi}(\Omega_0)$ is a domain in $\mathbb{R}^2$; 
\item for all $\boldsymbol{p}\in \mathbb{R}^2$ and all $r>0$, $\boldsymbol{\phi}(\Omega_0) \neq B(\boldsymbol{p},r)$, where $B(\boldsymbol{p},r)$ is the open ball of $\mathbb{R}^2$ with center $\boldsymbol{p}$ and radius $r$.
\end{itemize}
 For all $\bu \in \boldsymbol{H}^1(\bphi(\Omega_0))$, let $\varepsilon(\bu)$ and $\sigma(\bu)$ be the linearized strain and stress tensors defined by
\begin{align*}
    &\varepsilon(\bu) := \frac{1}{2} (\nabla \bu + \nabla \bu^T),\\
    &\sigma(\bu) :=  \frac{E}{(1+\nu)}\varepsilon(\bu) + \frac{E\nu}{(1+\nu)(1-\nu)} {\rm Tr}(\varepsilon(\bu))I,
\end{align*}
with $E>0$ and $-1<\nu<\frac{1}{2}$ are, respectively, called the Young modulus and the Poisson ratio. Then the bilinear form
\begin{align} \label{inner product}
    a_{\boldsymbol{\phi}}: \boldsymbol{H}^1({\bphi(\Omega_0)}) \times \boldsymbol{H}^1({\bphi(\Omega_0)}) \ni (\bu,\bv) \longmapsto a_{\boldsymbol{\phi}}(\bu,\bv) := \int_{ \boldsymbol{\phi}(\Omega_0)} \sigma ( \bu):\varepsilon (\bv) d\boldsymbol{x} 
     + \alpha \int_{ \boldsymbol{\phi}(\partial\Omega_0)} (\bu \cdot \bn_{\bphi}) (\bv\cdot \bn_{\bphi}) ds 
\end{align}
defines an inner product on $\boldsymbol{H}^1({\bphi(\Omega_0)})$.
\end{prop}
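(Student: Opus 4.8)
The plan is to verify the three defining properties of an inner product—bilinearity, symmetry, and positive definiteness—of which only the last is non-trivial. Bilinearity is immediate since both integrands depend linearly on each argument. For symmetry, I would expand the volume integrand using the isotropic constitutive law: writing $\mu := \frac{E}{2(1+\nu)}$ and $\lambda := \frac{E\nu}{(1+\nu)(1-\nu)}$, one has $\sigma(\bu) = 2\mu\,\varepsilon(\bu) + \lambda\,{\rm Tr}(\varepsilon(\bu))\,I$, hence
$$\sigma(\bu):\varepsilon(\bv) = 2\mu\,\varepsilon(\bu):\varepsilon(\bv) + \lambda\,{\rm Tr}(\varepsilon(\bu))\,{\rm Tr}(\varepsilon(\bv)),$$
which is manifestly symmetric in $(\bu,\bv)$; the boundary term is symmetric by inspection. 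It then remains to prove that $a_{\bphi}(\bu,\bu) \geq 0$ with equality only for $\bu = 0$.

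For positive semi-definiteness, I would study the energy density $w(\varepsilon) := \sigma(\bu):\varepsilon(\bu) = 2\mu\,|\varepsilon|^2 + \lambda\,({\rm Tr}\,\varepsilon)^2$ pointwise. Splitting $\varepsilon$ into its deviatoric part $\varepsilon^D$ and its spherical part, and using that in dimension $d=2$ one has $|\varepsilon|^2 = |\varepsilon^D|^2 + \tfrac12({\rm Tr}\,\varepsilon)^2$, I obtain
$$w(\varepsilon) = 2\mu\,|\varepsilon^D|^2 + (\mu + \lambda)\,({\rm Tr}\,\varepsilon)^2.$$
A short computation gives $\mu + \lambda = \frac{E}{2(1-\nu)}$, so that the hypotheses $E>0$ and $-1<\nu<\tfrac12$ guarantee $\mu > 0$ and $\mu + \lambda > 0$. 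Hence $w(\varepsilon) \geq 0$ with equality if and only if $\varepsilon = 0$, so $\int_{\bphi(\Omega_0)} \sigma(\bu):\varepsilon(\bu)\,d\bfx \geq 0$, while the boundary term is nonnegative as a square; together these give $a_{\bphi}(\bu,\bu) \geq 0$.

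For definiteness, I would assume $a_{\bphi}(\bu,\bu) = 0$. Since both contributions are nonnegative they must vanish separately: the volume term forces $\varepsilon(\bu) = 0$ a.e., while the boundary term forces $\bu\cdot\bn_{\bphi} = 0$ on $\bphi(\partial\Omega_0)$. By the classical characterization of the kernel of the symmetric gradient on a connected domain, $\varepsilon(\bu) = 0$ implies that $\bu$ is an infinitesimal rigid motion, $\bu(\bfx) = \bb + \omega\,\boldsymbol{R}\bfx$, with $\bb \in \R^2$, $\omega \in \R$, and $\boldsymbol{R}$ the rotation by $\tfrac{\pi}{2}$. I would then dispose of the two parts of this motion separately. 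If $\omega = 0$, then $\bu \equiv \bb$ is constant, and evaluating $\bu\cdot\bn_{\bphi} = 0$ at a boundary point of $\bphi(\Omega_0)$ that is extremal in the direction $\bb$ (where the outward normal is parallel to $\bb$) forces $\bb = 0$, hence $\bu = 0$.

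The main obstacle is the rotational case $\omega \neq 0$, and this is exactly where the hypothesis that $\bphi(\Omega_0)$ is not a ball comes in. Writing the motion as a rotation $\bu(\bfx) = \omega\,\boldsymbol{R}(\bfx - \bfx_c)$ about a center $\bfx_c$, the condition $\bu\cdot\bn_{\bphi} = 0$ says that $\boldsymbol{R}(\bfx - \bfx_c)$ is orthogonal to the normal, i.e. tangent to $\bphi(\partial\Omega_0)$, at every boundary point. Parametrizing the boundary by arc length and using $\boldsymbol{R}(\bfx - \bfx_c)\perp(\bfx - \bfx_c)$, I would show that $\frac{d}{ds}\tfrac12|\bfx(s) - \bfx_c|^2 = (\bfx(s) - \bfx_c)\cdot\bfx'(s) = 0$, so that $|\bfx - \bfx_c|$ is constant along the boundary. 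Hence $\bphi(\partial\Omega_0)$ is a circle centered at $\bfx_c$ and $\bphi(\Omega_0)$ is the corresponding open ball, contradicting the second hypothesis; therefore $\omega = 0$ and $\bu = 0$. The delicate points to handle are the regularity at the finitely many corners of the piecewise smooth boundary—where the arc-length argument applies on each smooth arc and $|\bfx - \bfx_c|$ extends continuously across corners—and the justification that $\varepsilon(\bu)=0$ in $\boldsymbol{H}^1$ indeed yields a genuine rigid motion.
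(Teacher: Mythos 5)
Your proposal is correct and follows essentially the same route as the paper's proof: reduce the problem to positive definiteness, identify the kernel of the symmetric gradient on the connected domain with infinitesimal rigid motions, eliminate the translational part using $\bu\cdot\bn_{\bphi}=0$ together with boundedness of the domain, and eliminate the rotational part by showing the boundary would have to be a circle centered at the rotation center, contradicting the non-ball hypothesis. Your write-up is slightly more explicit in two places --- the deviatoric--spherical split establishing pointwise coercivity of the elastic energy density (which the paper leaves implicit) and the arc-length argument showing $\|\bfx-\bfx_c\|$ is constant along the boundary, replacing the paper's dichotomy on the sign of the radial normal --- but these are variations in detail, not in approach.
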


\begin{proof}
    We only need to show the positive definiteness of $a_{\bphi}$. Let $\bu  \in \boldsymbol{H}^1({\bphi(\Omega_0)}) $ be such that $a_{\bphi}(\bu, \bu) =0$. Let us prove that $\bu = \boldsymbol{0}$. 
    From the definition \eqref{inner product} of $a_{\bphi}$, we infer that $\varepsilon(\bu) = 0$ in $\bphi(\Omega_0)$ and $\bu\cdot \bn_{\bphi} = 0$ on $ \bphi(\partial\Omega_0)$. 
    Thus, since $\boldsymbol{\phi}(\Omega_0)$ is connected, there exists $\boldsymbol{M}\in \mathbb{R}^{2\times 2}$ with $\boldsymbol{M}^T  = - \boldsymbol{M}$ and $\bb\in \mathbb{R}^2$ such that $\bu(\boldsymbol{x}) = \boldsymbol{M}\boldsymbol{x} + \bb$, for all $\boldsymbol{x}\in \boldsymbol{\phi}(\Omega_0)$.
    
    Let us now prove that necessarily $M = \boldsymbol{0}$ and $\bb = \boldsymbol{0}$. Reasoning by contradiction, let us first assume that $\boldsymbol{M}\neq \boldsymbol{0}$. Then, there exists $m\in \mathbb{R}\setminus\{0\}$ and $\boldsymbol{y}= (y_1, y_2)\in \mathbb{R}^2$ such that for all $\boldsymbol{x} = (x_1,x_2)\in \boldsymbol{\phi}(\Omega_0)$, 
$$
\bu(\boldsymbol{x}) = m \left(\begin{array}{c}
x_2 - y_2\\
-(x_1 - y_1)\\
\end{array}\right).
$$
Since $\bu\cdot \bn_{\bphi}=0$ on $ \bphi(\partial\Omega_0)$, we infer that $\bn_{\bphi}(\boldsymbol{x}) = \pm \frac{ (\boldsymbol{x}-\boldsymbol{y})}{\|\boldsymbol{x}-\boldsymbol{y}\|}$ for all $\boldsymbol{x}\in \boldsymbol{\phi}(\partial\Omega_0)$ such that $\boldsymbol{x}\neq \boldsymbol{y}$. Since the boundary of $\boldsymbol{\phi}(\Omega_0)$ is piecewise $\mathcal C^1$, the following holds:
    \begin{itemize}
        \item Either $\bn_{\bphi}(\boldsymbol{x}) = \frac{\boldsymbol{x}-\boldsymbol{y}}{\|\boldsymbol{x}-\boldsymbol{y}\|}$ for all $\boldsymbol{x}\in \boldsymbol{\phi}(\partial\Omega_0)$ and hence $\boldsymbol{\phi}(\Omega_0)$ has to be equal to $B(\boldsymbol{y},r)$ for some $r>0$, which is not possible by assumption; 
        \item Or  $\bn_{\bphi}(\boldsymbol{x}) = \frac{-(\boldsymbol{x}-\boldsymbol{y})}{\|\boldsymbol{x}-\boldsymbol{y}\|}$ for all $\boldsymbol{x}\in \boldsymbol{\phi}(\partial\Omega_0)$ has to be equal to $\overline{B(\boldsymbol{y},r)}^c$ for some $r>0$, which cannot be since $\boldsymbol{\phi}(\Omega_0)$ is a bounded set.
    \end{itemize}
    Hence, $\boldsymbol{M} = \boldsymbol{0}$ and $\bu(\boldsymbol{x}) = \bb$ for all $\boldsymbol{x}\in \boldsymbol{\phi}(\Omega_0)$. Thus, we obtain  $\bb\cdot \bn_{\bphi}=0$ on $ \boldsymbol{\phi}(\partial\Omega_0)$ which is not possible if $\bb\neq \boldsymbol{0}$ since $ \boldsymbol{\phi}(\partial\Omega_0)$ would then be a hyperplane orthogonal to $\bb$. 
    Hence, $\bb=\boldsymbol{0}$, and this completes the proof.
\end{proof}

\subsection{Shape matching without constraints} \label{shape matching opti}

The aim of this section is to propose a new approach to compute a morphism $\bphi \in \boldsymbol{\mathcal{T}}_{\Omega_0 } $ satisfying (\ref{domain mapping}). Our starting point is the approach introduced in \cite{de2016optimization}. As in~\cite{de2016optimization}, our approach consists in reformulating the problem as an optimization problem and the algorithm as a gradient descent.  Let $g \in H^1_{{\rm loc}}(\R^d)$ to be a level set function for $\Omega$, i.e., a function such that, for all $\bfx\in \mathbb{R}^d$,
\begin{align}\label{level set def}
    \left\{
    \begin{array}{ll}
         g(\boldsymbol{x})<0 & \text{if \hspace{1mm}} \boldsymbol{x} \in \Omega, \\
        g(\boldsymbol{x})>0 & \text{if \hspace{1mm}} \boldsymbol{x} \in \overline{\Omega}^c,\\
        0 &  \text{if \hspace{1mm}} \boldsymbol{x} \in \partial \Omega.
    \end{array} \right .
\end{align}
Define the functional
\begin{align}
            J_g: \boldsymbol{\mathcal{T}}_{\Omega_0} \ni \boldsymbol{\phi} \longmapsto J_g(\boldsymbol{\phi}) :=  \int_{\boldsymbol{\phi}(\Omega_0)} g(\boldsymbol{x}) d\boldsymbol{x} \in \R . \label{J}
\end{align}
Since $g$ is a level-set function, the set $ \boldsymbol{\mathcal{T}}^*:= \{\boldsymbol{\phi} \in \boldsymbol{\mathcal{T}}_{\Omega_0}\;  | \; \boldsymbol{\phi}(\Omega_0)=\Omega \} $ coincides with the set of global minimizers of $J_g$ over $\boldsymbol{\mathcal T}_{\Omega_0}$. Thus, in order to find a morphing from $\Omega_0$ to $\Omega$, we can consider the following optimization problem:
\begin{align}\label{min J}
    \text{Find } \boldsymbol{\phi}^* \in \argmin_{\boldsymbol{\phi} \in \boldsymbol{\mathcal{T}}_{\Omega_0}} J_g(\boldsymbol{\phi}).
\end{align} 
One example of level-set function, which is commonly used in practice, is the \textit{signed distance function} defined as
\begin{align}\label{Euclidean SDF}
    d_{\Omega}(\boldsymbol{x})=\left\{
    \begin{array}{ll}
        - d(\boldsymbol{x},\partial \Omega) & \text{if \hspace{2mm}} \boldsymbol{x} \in \Omega, \\
        d(\boldsymbol{x},\partial \Omega) & \text{if \hspace{2mm}} \boldsymbol{x} \in \overline{\Omega}^c,\\
        0 &  \text{if \hspace{2mm}} \boldsymbol{x} \in \partial \Omega,
    \end{array}\right .
\end{align}
where $d(\boldsymbol{x},\partial \Omega)$ is the Euclidean distance of $\boldsymbol{x}$ to $\partial \Omega$.

The high-fidelity algorithm we propose to compute a morphism $\bphi\in \boldsymbol{\mathcal{T}}_{\Omega_0}$ satisfying \eqref{domain mapping} is a particular gradient descent algorithm to minimize the functional $J_g$ for some level set function $g\in H^1_{\rm loc}(\mathbb{R}^d)$ over $\boldsymbol{\mathcal{T}}_{\Omega_0}$, all the iterations being guaranteed to be well-defined. More precisely, the algorithm is an iterative algorithm which computes a sequence of morphisms $(\bphi^{(m)})_{m\geq 0}$ as follows. The starting point is $\bphi^{(0)} = \bId$. At iteration $m \in \N$, knowing $\bphi^{(m)}$, the next iterate $\bphi^{(m+1)}$ is computed as
\begin{equation}\label{eq:iterate}
\bphi^{(m+1)} = (\bId + \gamma^{(m)}\bu^{(m)}) \circ \bphi^{(m)},
\end{equation}
where $\gamma^{(m)}$ is some positive (small) constant and $\bu^{(m)}$ is computed as follows: Given some finite-dimensional subspace $\boldsymbol{V}^{(m)} \subset \boldsymbol{W}^{1,\infty}\left( \Omega^{(m)}\right)$ where $\Omega^{(m)}:= \bphi^{(m)}(\Omega_0)$,
 $\bu^{(m)}\in \boldsymbol{V}^{(m)}$ is defined as the unique solution to 
\begin{equation}\label{eq:var1}
\forall \bv^{(m)}\in \boldsymbol{V}^{(m)}, \quad a_{\bphi^{(m)}}(\bu^{(m)}, \bv^{(m)}) = - \widetilde{DJ_g}(\bphi^{(m)})(\bv^{(m)}).
\end{equation}
In other words, $\bu^{(m)}\in \boldsymbol{V}^{(m)}$ is the unique solution to the following linear elasticity problem:
\begin{equation}\label{eq:var2}
\forall \bv^{(m)}\in \boldsymbol{V}^{(m)}, \quad \int_{ \Omega^{(m)}} \sigma ( \bu^{(m)}):\varepsilon (\bv^{(m)}) d\boldsymbol{x} 
     + \alpha \int_{\partial\Omega^{(m)}} (\bu^{(m)} \cdot \bn^{(m)}) (\bv^{(m)}\cdot \bn^{(m)}) ds  = - \int_{\partial\Omega^{(m)}} g(\boldsymbol{x}) \bv^{(m)}(\boldsymbol{x})\cdot  \bn^{(m)}ds,
\end{equation}
where $\bn^{(m)}$ denotes the outward unit normal vector to $\Omega^{(m)}$. In particular, when the level set function $g$ is chosen to be the distance function $d_\Omega$, problem (\ref{eq:var2}) reads as follows:
\begin{equation}\label{eq:var3}
\forall \bv^{(m)}\in \boldsymbol{V}^{(m)}, \quad \int_{ \Omega^{(m)}} \sigma ( \bu^{(m)}):\varepsilon (\bv^{(m)}) d\boldsymbol{x} 
     + \alpha \int_{\partial\Omega^{(m)}} (\bu^{(m)} \cdot \bn^{(m)}) (\bv^{(m)}\cdot \bn^{(m)}) ds  = - \int_{\partial\Omega^{(m)}} d_\Omega(\boldsymbol{x}) \bv^{(m)}(\boldsymbol{x})\cdot  \bn^{(m)}ds.
\end{equation}
In what follows, we refer to this procedure as the \textbf{signed distance algorithm}.  An illustration is shown in Figure \ref{3domains}. 

\begin{figure}[ht] 
\begin{center}
\includegraphics[scale=0.4]{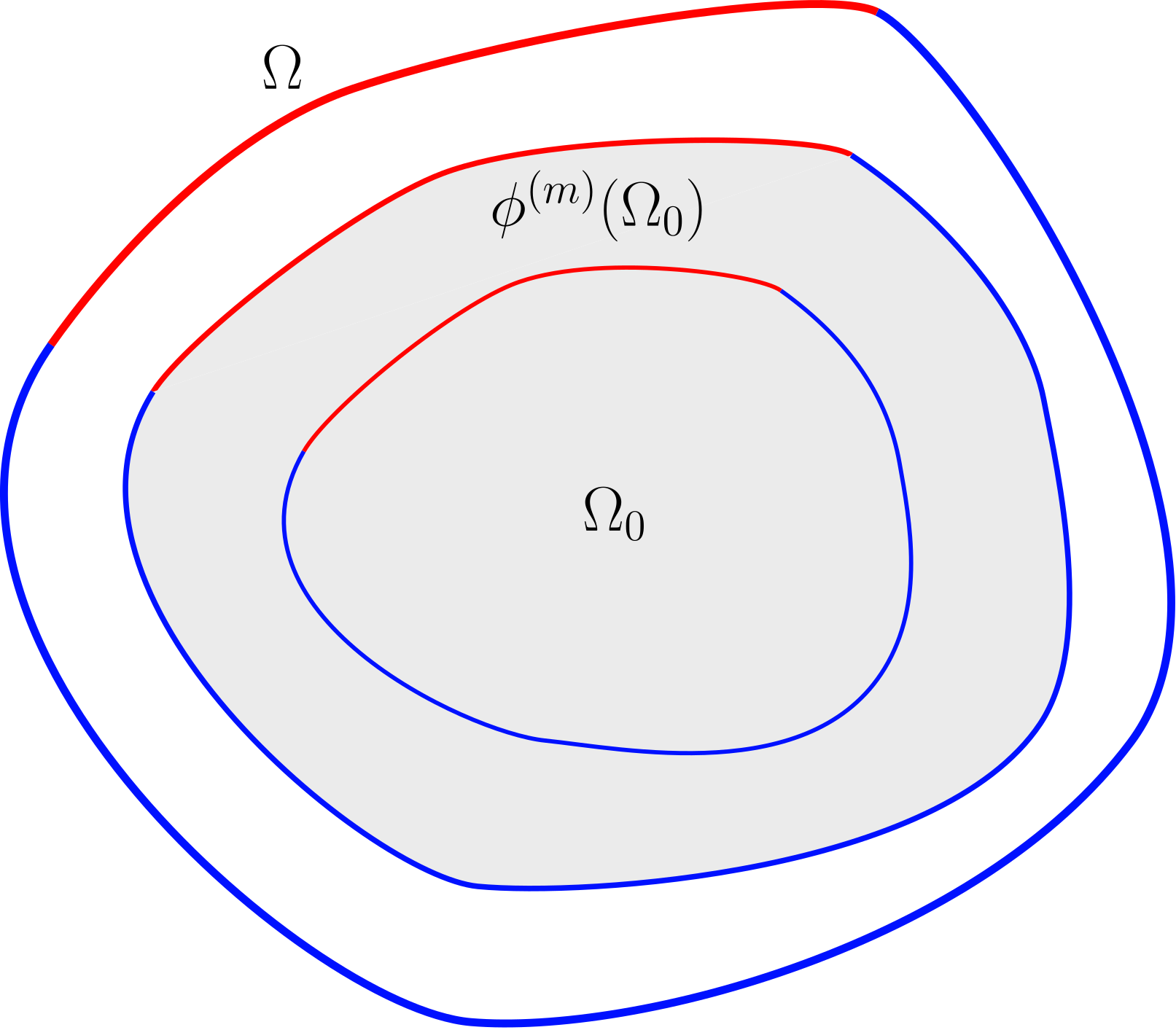}
\caption{ \label{3domains} Example of reference domain $\Omega_0$, target domain $\Omega$, and intermediate domain $\boldsymbol{\phi}^{(m)}(\Omega_0$).  }
\end{center}
\end{figure}
 
\begin{remark}[Comparison with \cite{de2016optimization}]
Let us comment on the differences between the approach we propose and the one in~\cite{de2016optimization}. In~\cite{de2016optimization}, the authors consider a similar iterative algorithm with the difference that the gradient direction $\bu^{(m)}$ is obtained as the solution of a problem of the form
\begin{equation}\label{eq:var5}
\forall \bv^{(m)}\in \widetilde{\boldsymbol{V}}^{(m)}, \quad \int_{ \Omega^{(m)}} \sigma ( \bu^{(m)}):\varepsilon (\bv^{(m)}) d\boldsymbol{x}  = - \int_{\partial\Omega^{(m)}} d_\Omega(\boldsymbol{x}) \bv^{(m)}(\boldsymbol{x})\cdot  \bn^{(m)}ds,
\end{equation}
where $\widetilde{\boldsymbol{V}}^{(m)}$ is a finite-dimensional subspace of $\boldsymbol{H}^1_{0,\omega^{(m)}}(\Omega^{(m)}):= \left\{ \bu \in \boldsymbol{H}^1(\Omega^{(m)}): \; \bu = \boldsymbol{0} \mbox{ on }\omega^{(m)}\right\}$ and $\omega^{(m)} \subset \Omega^{(m)}$ is an open subdomain of $\Omega^{(m)}$ with positive measure. Our motivation for considering problems of the form (\ref{eq:var3}) instead of problems of the form (\ref{eq:var5}) is twofold:
\begin{itemize}
\item On the one hand, in the present approach, there is no need for the choice of a subdomain $\omega^{(m)}$ of $\Omega^{(m)}$, which in particular avoids to have null displacements into some arbitrarily chosen region of the domain $\Omega^{(m)}$; 
\item on the other hand, the second term on the left-hand side of  (\ref{eq:var3}) may be seen as a Tikohonov regularization term to select a solution $\bu^{(m)}$ such that its normal component is as small as possible on the boundary of $\Omega^{(m)}$. This allows, in particular close to convergence, to allow tangential displacements along the boundary of the domain. This feature turns out to be particularly useful in our numerical tests.
\end{itemize}
\end{remark}

\begin{remark}[Parameter $\gamma^{(m)}$]
For simplicity, in our numerical tests, the sequence of parameters $(\gamma^{(m)})_{m\geq 0}$ is chosen to be equal to some constant value $\gamma$. Let us highlight, however, that, if for all $m\in \N$, we choose $\gamma^{(m)} \leq \displaystyle \min \left(\gamma_0, \frac{1}{2\| \bu^{(m)}\|_{\boldsymbol{W}^{1,\infty}(\Omega^{(m)})}} \right)$ for some $\gamma_0>0$, then it is guaranteed by Lemma~$\ref{variation of phi}$ that $\boldsymbol{\phi}^{(m)}\in \boldsymbol{\mathcal{T}}_{\Omega_0}$ for all $ m\in \N$.
\rev{Thus, one approach to adapt the value of $\gamma^{(m)}$ during the computation is to choose small values at the beginning of the iterations (when $\| \bu^{(m)}\|_{\boldsymbol{W}^{1,\infty}(\Omega^{(m)})}$ is large) and larger values as $\phi^{(m)}(\Omega_0)$ approaches $\Omega$ (and $\| \bu^{(m)}\|_{\boldsymbol{W}^{1,\infty}(\Omega^{(m)})}$ gets smaller). It is expected that taking a too small value of $\gamma^{(m)}$ hinders algorithmic performance, whereas a too large value can lead to element distortion and eventually failure.}

\end{remark}
\begin{remark}[Gradient descent]
The present procedure is a gradient descent algorithm for the resolution of the optimization problem (\ref{min J}). Indeed, $\bpsi^{(m)}:= \bu^{(m)} \circ \bphi^{(m)}$ is the unique solution of 
    $$
    \forall \bxi^{(m)} \in \boldsymbol{V}_0^{(m)}, \quad c_{\boldsymbol{\phi}^{(m)}}(\bpsi^{(m)},\bxi^{(m)}) = DJ_g(\bphi^{(m)})(\bxi^{(m)}),
    $$
    where for all $\bphi\in \boldsymbol{\mathcal T}_{\Omega_0}$,
    $$
     c_{\boldsymbol{\bphi}}: \boldsymbol{H}^1(\Omega_0) \times \boldsymbol{H}^1(\Omega_0) \ni (\bpsi,\xi) \longmapsto c_{\boldsymbol{\bphi}}(\bpsi,\bxi) := a_{\bphi}(\bpsi \circ \bphi^{-1}, \bxi\circ \bphi^{-1}),
    $$
    and
    $$
    \boldsymbol{V}_0^{(m)}:= \left\{ \bv \circ \bphi^{(m)}: \; \bv \in \boldsymbol{V}^{(m)}\right\} \subset \boldsymbol{H}^1(\Omega_0).
    $$
    The function $\bpsi^{(m)} \in \boldsymbol{\mathcal{T}}'_{\bphi^{(m)}}$ is a gradient descent direction, computed with respect to the inner product $c_{\bphi^{(m)}}$ on $\boldsymbol{V}_0^{(m)}$, and (\ref{eq:iterate}) amounts to update $\bphi^{(m+1)}$ as $\bphi^{(m+1)} = \bphi^{(m)} + \gamma^{(m)} \bpsi^{(m)}$.
\end{remark}

\subsection{Shape matching with constraints\rev{: main morphing algorithm}} \label{Shape matching with tags}
The goal of this section is to propose \rev{our main morphing algorithm. It is based on a modification} of the iterative procedure presented in the previous section so as to enforce matching conditions concerning points and lines as in \eqref{points mapping}-\eqref{curve mapping}. Specifically, we compute at each iteration $m\in \N$ a displacement field $\bu^{(m)}\in \boldsymbol{V}^{(m)}$ solution to
\begin{equation}\label{eq:linear1}
\forall \bv^{(m)}\in \boldsymbol{V}^{(m)}, \quad a_{\bphi^{(m)}}(\bu^{(m)}, \bv^{(m)}) = b_{\bphi^{(m)}}(\bv^{(m)}),
\end{equation}
for some continuous linear functional $b_{\bphi^{(m)}}: \boldsymbol{H}^1(\bphi^{(m)}(\Omega_0)) \to \mathbb{R}$ encoding the constraints \eqref{points mapping}-\eqref{curve mapping}. The updated morphing $\bphi^{(m+1)}$ is again defined by \eqref{eq:iterate}.

Let us focus more specifically on the case where $d=2$ for the sake of clarity. Then, for all $\bphi \in \boldsymbol{\mathcal T}_{\Omega_0}$ and all $\bv \in \boldsymbol{H}^1(\bphi^{(m)}(\Omega_0))$,  the quantity $b_{\bphi}(\bv)$ is defined as the sum of two terms:
$$
b_{\bphi}(\bv) = b_{\bphi}^p(\bv) + b_{\bphi}^l(\bv), 
$$
where $b_{\bphi}^p$ ( resp., $b_{\bphi}^l$) is a point-matching (resp., line-matching) linear form. 

On the one hand, the point-matching linear form is defined as follows. For all $1\leq k \leq N_p$, we consider a neighborhood $N_{k}^0$ of $\rbP_{k}^0$ in $\partial \Omega_0$ (which is taken to be small), and define the following linear form on $\boldsymbol{H}^1(\boldsymbol{\phi}(\Omega_0))$: 
\begin{align}\label{b1}
 b^p_{\boldsymbol{\phi}}(\bv) := \beta_1  \sum_{k=1}^{N_{p}}  \displaystyle \int_{\boldsymbol{\phi}(N_{k}^0)}  (\rbP_{k}- \boldsymbol{\phi}(\rbP_{k}^0))\cdot \bv ds,
\end{align}
with the user-dependent parameter $\beta_1 >0$. The aim of this term is to force each point $\rbP_{k}^0$ to match with its corresponding point $\rbP_{k}$ at convergence of the scheme. Notice that $\boldsymbol{\phi}(\rbP_{k}^0)$ is well defined since $\boldsymbol{\phi} \in \boldsymbol{W}^{1,\infty}(\Omega_0)$. 

On the other hand, the line-matching linear form is defined as follows. For any bounded closed subset $A\subset \mathbb{R}^2$, we denote by $1_A$ its characteristic function and $\boldsymbol{\Pi}_{A}(\boldsymbol{x})$ denotes one minimizer of the following minimization problem:
\begin{equation}\label{eq:min1}
\boldsymbol{\Pi}_{A}(\boldsymbol{x}) \in \mathop{\argmin}_{\boldsymbol{y} \in A}\left\|\boldsymbol{x}-\boldsymbol{y}\right\|.
\end{equation}
Such an element is not uniquely defined in general, in which case one has to make a choice among all minimizers of (\ref{eq:min1}). Then we define the \textit{vector distance function} $\boldsymbol{D}^{\partial\Omega}_{\boldsymbol{\phi}}: \bphi(\partial \Omega_0) \to \mathbb{R}^2$ as follows:
\begin{align} \boldsymbol{D}^{\partial\Omega}_{\boldsymbol{\phi}}:
\boldsymbol{\phi}(\partial\Omega_0)\ni \boldsymbol{x} \longmapsto  \boldsymbol{D}^{\partial\Omega}_{\boldsymbol{\phi}}(\boldsymbol{x}) := \sum_{k=1}^{N_l}(\Pi_{\overline{L_{k}}}(\boldsymbol{x})-\boldsymbol{x}) 1_{\boldsymbol{\phi}(L_{k}^0)}(\boldsymbol{x}) \in \R^2 .  \label{vectDistance}
\end{align}
An illustration of $\boldsymbol{D}^{\partial\Omega}_{\boldsymbol{\phi}}$ is shown in Figure \ref{vectorialDistanceRep}. The linear form $b_{\bphi}^l : \boldsymbol{H}^1(\bphi(\Omega_0)) \to \mathbb{R}$ is then defined as follows:
\begin{equation}\label{b2}
         \forall \bv \in \boldsymbol{H}^1(\bphi(\Omega_0)), \quad b^l_{\boldsymbol{\phi}}(\bv) := \beta_2 \int_{ \boldsymbol{\phi}(\partial\Omega_0)}  \big(\boldsymbol{D}^{\partial\Omega}_{\boldsymbol{\phi}}\cdot \bn_{\bphi}\big) \hspace{0.1em} \big(\bv\cdot \bn_{\bphi}\big) ds =  \beta_2 \sum_{k=1}^{N_l} \int_{ \boldsymbol{\phi}(L_k^0)} \big( (\Pi_{\overline{L_{k}}}-\bId)\cdot \bn_{\bphi} \big) \hspace{0.1em} \big(\bv\cdot \bn_{\bphi}\big) ds,
\end{equation}
for some user-dependent parameter $\beta_2>0$.

In what follows, we refer to the procedure described above as the \textbf{vector distance algorithm}.
\begin{remark}[Alternative definition]
An alternative definition for $b_{\bphi}^l$ is 
\begin{equation}\label{b22}
         \forall \bv \in \boldsymbol{H}^1(\bphi(\Omega_0)), \quad b^l_{\boldsymbol{\phi}}(\bv) := \beta_2 \int_{ \boldsymbol{\phi}(\partial\Omega_0)}  \boldsymbol{D}^{\partial\Omega}_{\boldsymbol{\phi}}\cdot \bv ds.
\end{equation}
Numerical tests were also performed with this alternative definition. Altogether, the quality of the resulting transported mesh was observed to be better when using \eqref{b2} than when using \eqref{b22}.
\end{remark}

\begin{figure}[ht] 
\begin{center}
\includegraphics[scale=0.4]{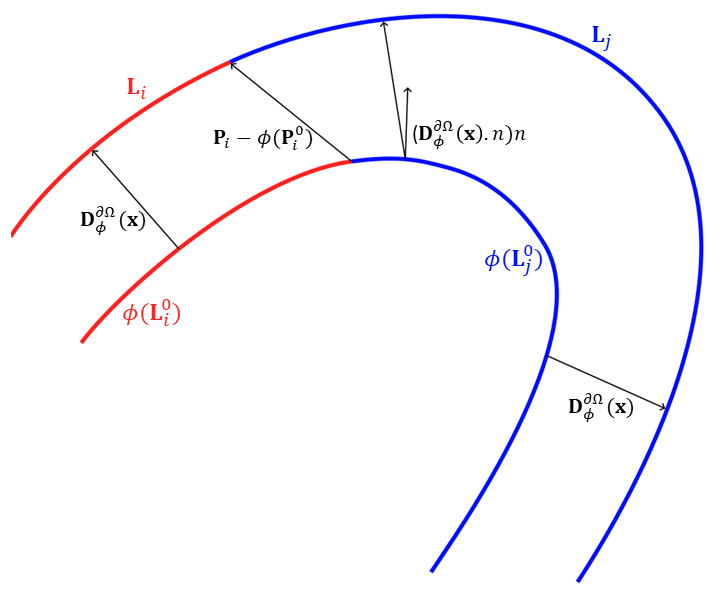}
\caption{\label{vectorialDistanceRep} Visual representation of \eqref{vectDistance}.  $\boldsymbol{D}^{\partial\Omega}_{\boldsymbol{\phi}}(\boldsymbol{x})$ is the vector that points from $\boldsymbol{x}\in \boldsymbol{\phi}(L_i^0)$ to its projection on $L_i$.}
\end{center}
\end{figure}

\subsection{Implementation details} \label{Implementaion details}

The aim of this section is to give some details about the practical implementation of the procedures described in the two previous sections.

In the initialization step, conforming meshes $\mathcal{M}_0$ and $\mathcal{M}$ of $\Omega_0$ and $\Omega$, respectively, are chosen, typically employing simplicial mesh cells. At each iteration $m \in \N$, the mesh $\mathcal M_0$ is transformed into a mesh $\mathcal M^{(m)}$ of $\Omega^{(m)}=\bphi^{(m)}(\Omega_0)$ via the morphing $\bphi^{(m)}$. The finite-dimensional space $\boldsymbol{V}^{(m)}$ is then chosen at each iteration as the classical $\mathbb{P}_1$ finite-element space associated with the mesh $\mathcal M^{(m)}$. In principle, the transported mesh $\mathcal M^{(m)}$ could contain ill-shaped elements. In such a situation, one could potentially introduce a new mesh of $\Omega^{(m)}$. This was not needed in our numerical tests. \rev{We also notice in passing that the computation of the vector distance function 
$\boldsymbol{D}^{\partial\Omega}_{\boldsymbol{\phi}^{(m)}}$ only uses the boundary mesh, whereas our implementation of the signed distance function uses the full mesh of the target domain. In both cases, the target mesh should be refined near the boundary of the target domain so as to capture its curvature. We refer the reader to Section~\ref{Numerical results offline} for some numerical illustrations.}

For each vertex $\boldsymbol{x}$ in the boundary mesh $\boldsymbol{\phi}^{(m)}(\partial \mathcal{M}_0)$, we compute $d_{\Omega}(\boldsymbol{x})$ by checking if $\boldsymbol{x}$ is inside $\mathcal{M}$ and determining the projection of $\boldsymbol{x}$ onto $\partial \mathcal{M}$. This latter operation is realized by determining the closest node on $\partial \mathcal{M}$ to $\boldsymbol{x}$ (using a KD-tree structure for example), identifying boundary elements sharing this node (forming candidate elements), and projecting $\boldsymbol{x}$ onto these elements. To compute $\boldsymbol{D}^{\partial\Omega}_{\boldsymbol{\phi}^{(m)}}(\boldsymbol{x})$, we determine the index $1\leq k \leq N_l$ such that $\boldsymbol{x}\in \boldsymbol{\phi}^{(m)}(L^0_{k}) \subset \boldsymbol{\phi}^{(m)}(\partial \mathcal{M}_0)\subset \R^2$, and compute the projection of $\boldsymbol{x}$ onto $\overline{L_{k}}$. Notice that computing $\boldsymbol{D}^{\partial\Omega}_{\boldsymbol{\phi}^{(m)}}(\boldsymbol{x})$ tends to be less costly than computing $d_{\Omega}(\boldsymbol{x})$, since we do not have to determine the position of $\boldsymbol{x}$ relative to $\partial \Omega$ to determine the sign of $d_{\Omega}(\boldsymbol{x})$. In any case, once the matching term is evaluated, we can compute $\bu^{(m)}$ by solving the variational problem \eqref{eq:var1} or \eqref{eq:linear1}. 

The value of the parameter $\gamma^{(m)}$ can be adjusted throughout the iterations, as long as it remains sufficiently small to ensure that $\bphi^{(m+1)}$ belongs to $\boldsymbol{\mathcal T}_{\Omega_0}$ (see Lemma~\ref{variation of phi}). In our numerical tests, the value of $\gamma^{(m)}$ is chosen to be equal to some constant value $\gamma>0$ which is specified below.

Let us introduce here two quantities that will be used to measure the geometrical error and thus to assess the quality of a given morphing $\bphi \in \boldsymbol{\mathcal T}_{\Omega_0}$, and to define the stopping criterion of the two iterative algorithms we have presented in the previous sections. The first quantity is based on the use of the signed distance function (and is thus suitable to define a convergence criterion for the signed distance algorithm): 
\begin{equation}\label{eq:Delta1}
\mathrm{\Delta}_1(\bphi, \Omega_0, \Omega):=
\sup\{ |d_{\Omega}(\boldsymbol{x})| \:: \: \boldsymbol{x}\in \bphi(\partial\Omega_0)\} = \left\| d_\Omega \right\|_{\boldsymbol{L}^\infty(\bphi(\partial\Omega_0))}.
\end{equation}
The second quantity relies on the vector distance function (and is thus used for the definition of a convergence criterion for the vector distance algorithm): 
\begin{equation}\label{eq:Delta2}
\mathrm{\Delta}_2(\bphi, \Omega_0, \Omega):=\sup\{\|\boldsymbol{D}_{\bphi}^{\partial \Omega}(\boldsymbol{x})\| : \; \boldsymbol{x}\in \bphi(\partial\Omega_0)\} = \left\| \boldsymbol{D}_{\bphi}^{\partial \Omega} \right\|_{\boldsymbol{L}^\infty(\bphi(\partial\Omega_0))}.
\end{equation}
More precisely, both quantities are evaluated using the meshes $\mathcal{M}_0$ and $\mathcal{M}$, \rev{and the $L^\infty$-norms in~\eqref{eq:Delta1} and~\eqref{eq:Delta2} are evaluated approximately by either sampling only the boundary nodes of $\boldsymbol{\phi}(\partial \mathcal{M}_0)$ or using additionally 9 interior points on each boundary edge of $\boldsymbol{\phi}(\partial \mathcal{M}_0)$. The former strategy is obviously less computationally intensive. As illustrated below in our experiments (see Figure~\ref{errors_different_meshes}), it is viable provided the boundary mesh $\boldsymbol{\phi}(\partial \mathcal{M}_0)$ is sufficiently refined.}
Finally, given a stopping threshold $\epsilon>0$, \rev{which should ideally be of the order of the size of the boundary elements of the morphed reference mesh, the convergence criterion for the iterative algorithm is $\Delta_i(\bphi^{(M)},\Omega_0,\Omega) < \epsilon$ for $i=1,2$}. After convergence at iteration $M$, we perform one final correction step, by computing a finite element approximation of the unique solution $\bu^* \in \boldsymbol{H}^1(\bphi^{(M)}(\Omega_0))$ of

\begin{align}
    \Biggl\{
    \begin{array}{ll}
         - {\rm div}\left( \sigma(\bu^*)\right) = 0, & \mbox{ in }\bphi^{(M)}(\Omega_0), \\
       \bu^* = \boldsymbol{D}^{\partial\Omega}_{\boldsymbol{\phi}^{(M)}}, & \mbox{ on } \bphi^{(M)}(\partial\Omega_0).\\
    \end{array}
\end{align}
The final morphism is set to be $\boldsymbol{\phi}^*:= (\bId + \bu^*) \circ \boldsymbol{\phi}^{(M)}$. This final correction step guarantees that $\boldsymbol{\phi}^*(\partial \Omega_0)$ coincides with $\partial \Omega$. An illustration of the output of this final correction step is presented in Figure~\ref{correctice step illustration}.

\begin{figure}[ht] 
     \centering
     \begin{subfigure}[b]{0.45\textwidth}
         \centering
        \includegraphics[scale=0.15]{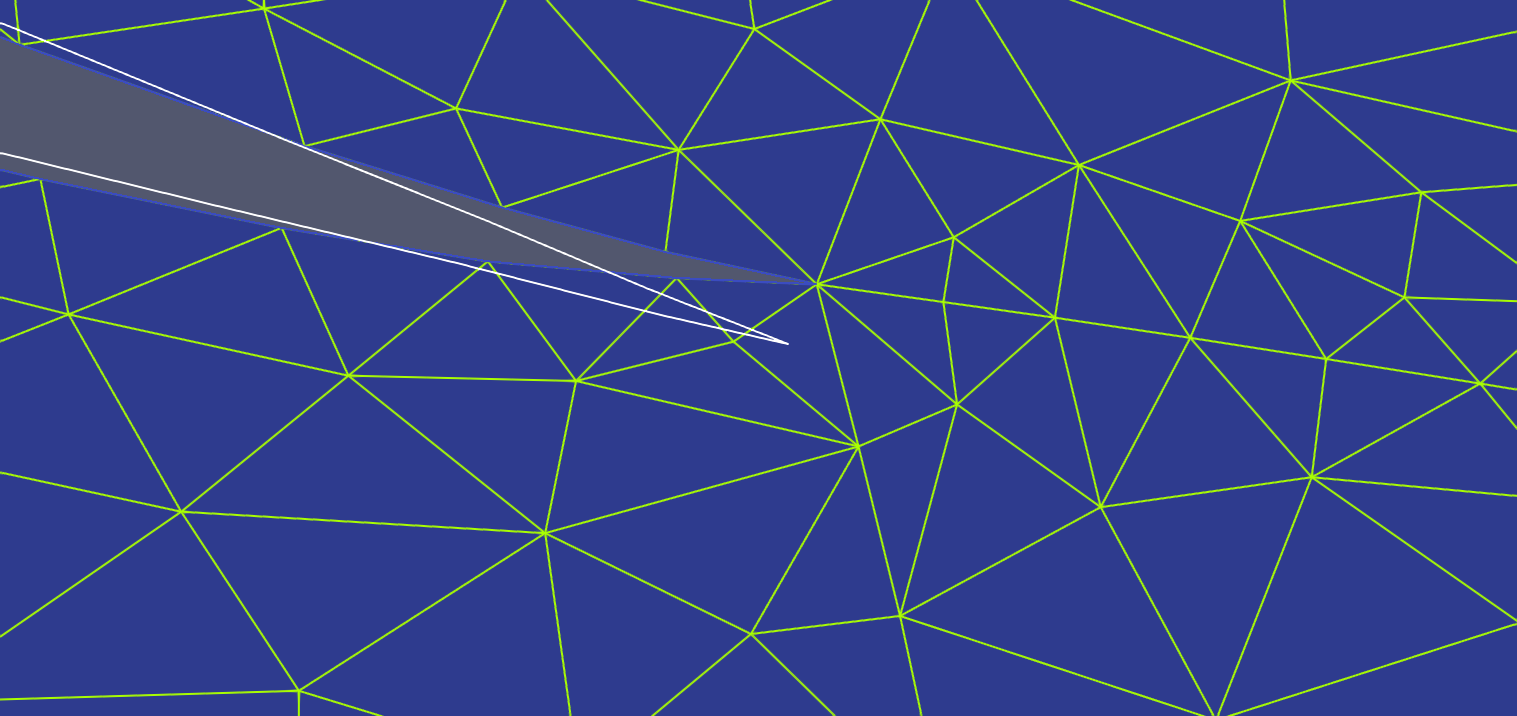}
     \caption{ Mesh of $\boldsymbol{\phi}^{(M)}(\Omega_0)$ .}
     \end{subfigure}
     \hfill
     \begin{subfigure}[b]{0.45\textwidth}
         \centering
\includegraphics[scale=0.15]{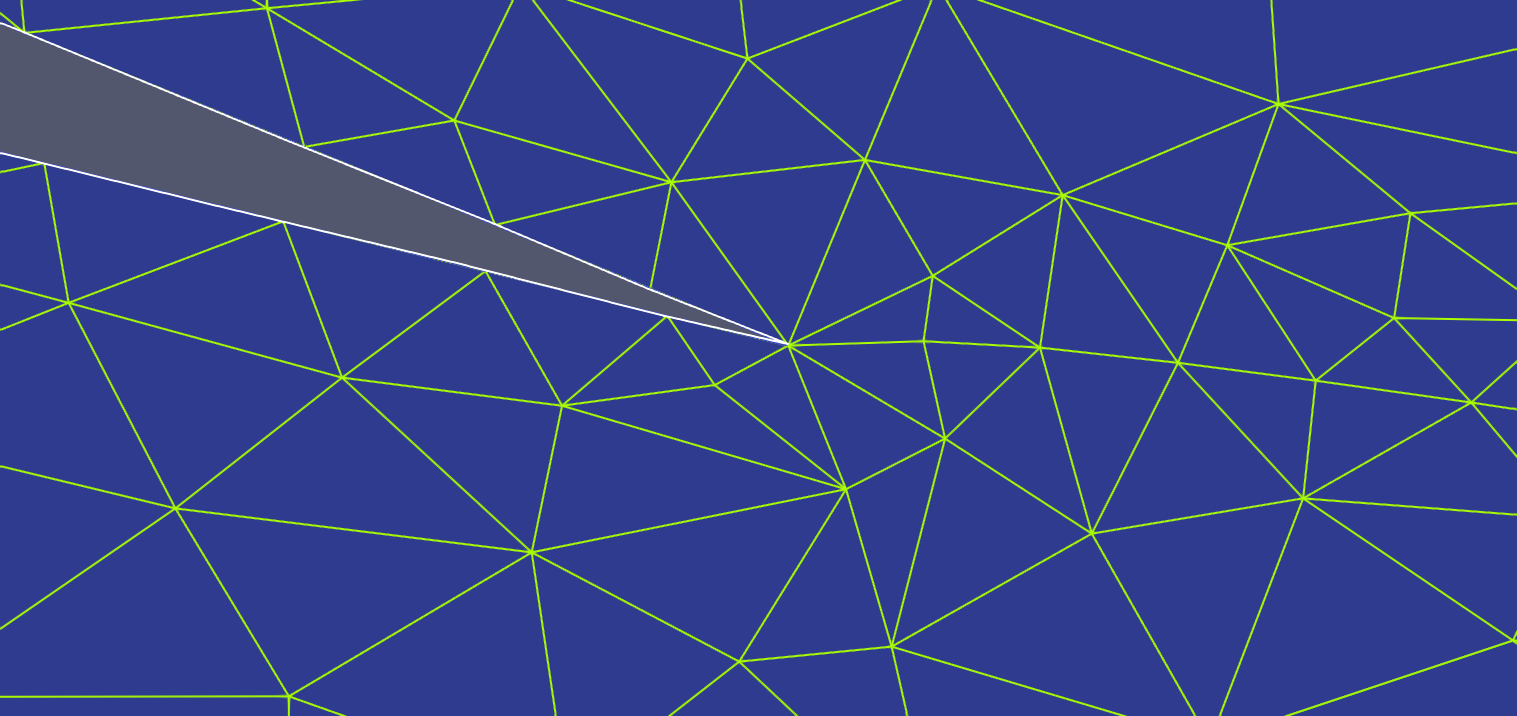}
\caption{ Final morphed mesh aligned with $\partial \Omega$.}
     \end{subfigure}
\caption{\label{correctice step illustration} Illustration of the final correction step.}
\end{figure}

\subsection{Numerical results} \label{Numerical results offline}
In this section, we present numerical results obtained with the procedures described in the previous sections on two two-dimensional test cases.  All the results were obtained using the Muscat library \cite{muscat2024}. 

\subsubsection{Tensile2D dataset} \label{Tensile2D offline}
The first example is taken from the dataset in \cite{casenave_2023_10124594}. For all $R>0$, the set $B(R)$ is defined as $B(R):=\{ (x,y) \in \R^2 / (x-1)^2+y^2\leq R^2\} \bigcup \{ (x,y) \in \R^2 / (x+1)^2+y^2\leq R^2\}$. We consider the reference domain $\Omega_0:=[-1,1]^2\backslash B(0.5)$ and the target domain $\Omega:=[-1,1]^2\backslash B(0.2)$, both shown in Figure~\ref{Tensile2D geo offline}a and~\ref{Tensile2D geo offline}b, respectively. \rev{We emphasize that the parameter $R$ is not used in the morphing computation, but merely as a label to enumerate the geometries in the dataset.} We consider $N_p=4$ control points in $\partial\Omega_0$ having coordinates $(-1,0.5),(-1,-0.5),(1,0.5),(-1,-0.5)$. We consider $N_l=4$ control lines on $\partial\Omega_0$  which consist of the two half-circles (highlighted in red on the reference domain in Figure \ref{Tensile2D geo offline}) and the top and bottom parts of the boundary (highlighted in green). In Figure \ref{fig:Tensile2D ref mesh}, we show the mesh of the reference domain $\Omega_0$, provided in the dataset, superimposed to the boundary of the target domain $\Omega$. The reference mesh has approximately 19k elements. \rev{In order to compute the signed distance function, we use the mesh of the target domain provided in the dataset, which is composed of 23k elements. In Table 1, we report the normalized time to compute the signed distance function for different resolutions of the target mesh. The normalization is made with respect to the time to compute the signed distance function for a target mesh of 1.3k elements. We observe that the size of the target mesh does not excessively impact the computation of the signed distance function.}

\begin{table}[h]
\centering
\rev{\begin{tabular}{|c|c|c|c|} 
\hline
 \#elements (target mesh) & 1.3k & 23k & 356k \\
\hline
 Time ratio & 1   & 1.22 &   1.76   \\
\hline
\end{tabular}}
\caption{\rev{Normalized computational time for the signed distance function on different meshes of the target domain.}}
\label{tab:time_sdf}
\end{table}

The aim of the following tests is to highlight the advantages of the vector distance algorithm with respect to the distance function algorithm. The vector distance algorithm is run with the parameters $E:=1, \nu:=0.3,\alpha:=200, \gamma:=8$, $\beta_1:=0$ and $\beta_2:=1$. The convergence is obtained after 145 iterations with a tolerance $\epsilon =10^{-3}$ and a stopping criterion based on $\Delta_2$. The evolution of the deformed mesh is shown in Figure \ref{fig:Evolution_VDF_SDF} (top row). For comparison, we show in the same figure (bottom row) the evolution of the mesh using the signed distance algorithm, with the parameters $E:=1, \nu:=0.3,\alpha:=200, \gamma:=8$. For the signed distance algorithm, convergence is attained after 180 iterations with a stopping criterion based on $\Delta_1$ and the same value of $\epsilon$ as above. When using the signed distance function, the half-circles in $\partial \Omega_0$ are not mapped onto the half-circles in $\partial \Omega$. 
\rev{Another advantage of the vector distance algorithm is its computational effectivity. Indeed, in our implementation, evaluating the vector distance is typically two times faster than the signed distance.} As mentioned in Section \ref{Implementaion details}, this is due to the fact that calculating the signed distance function is more expensive than calculating the vector distance function. 

\begin{figure}[ht]
     \centering
     \begin{subfigure}[b]{0.3\textwidth}
         \centering
        \includegraphics[scale=0.3]{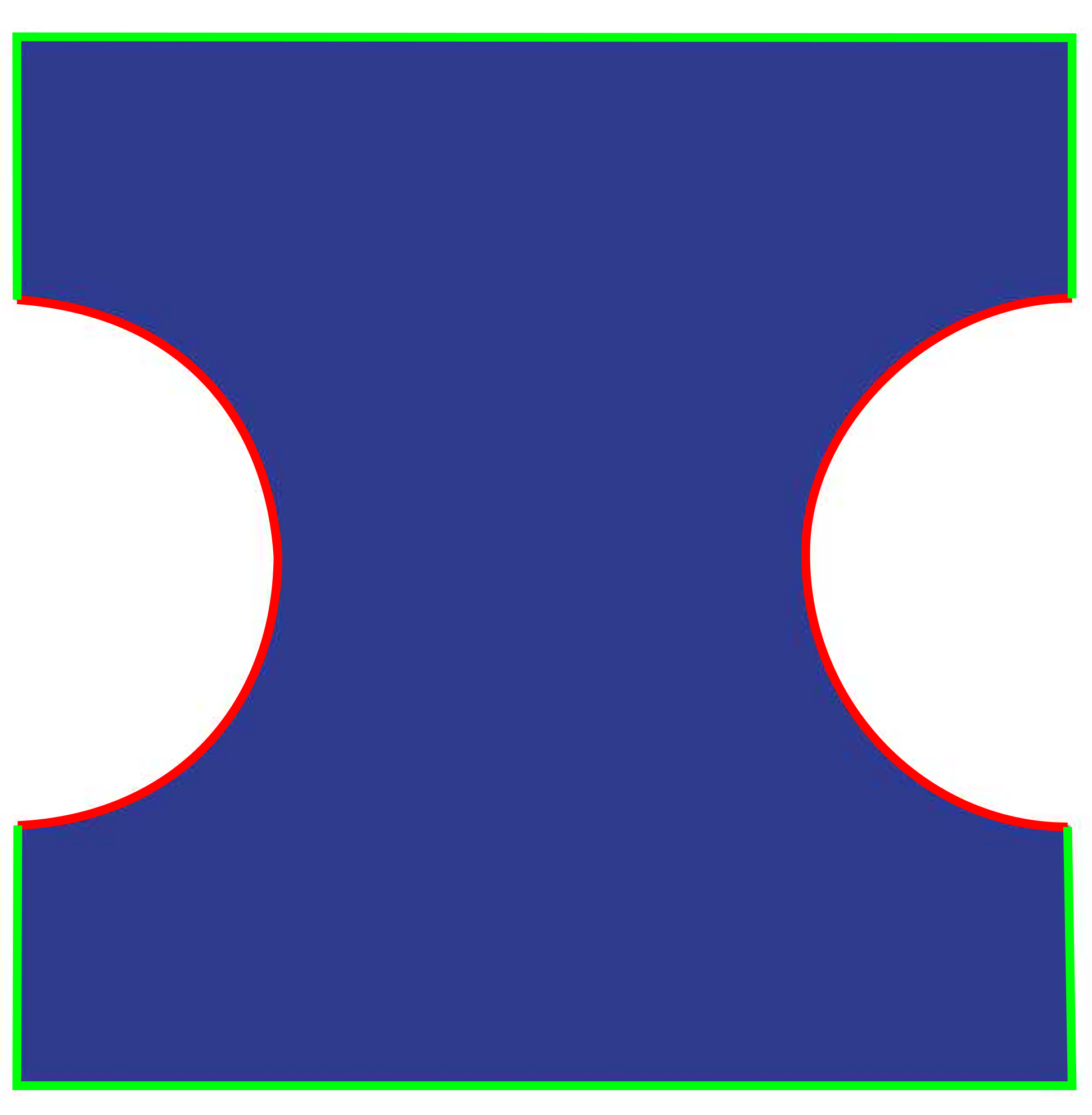}
     \caption{ Reference domain $\Omega_0$.}
     \end{subfigure}
     \hfill
     \begin{subfigure}[b]{0.3\textwidth}
         \centering
\includegraphics[scale=0.3]{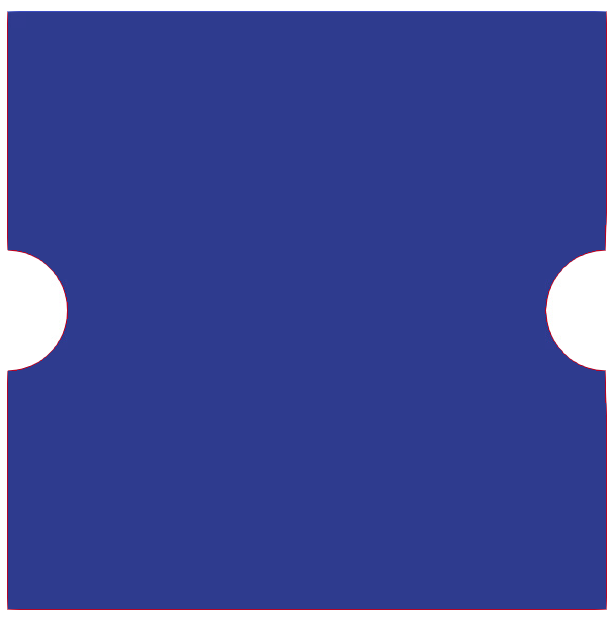}
\caption{ Target domain $\Omega$.}
     \end{subfigure}
      \hfill
          \begin{subfigure}[b]{0.3\textwidth}
         \centering
        \includegraphics[scale=0.3]{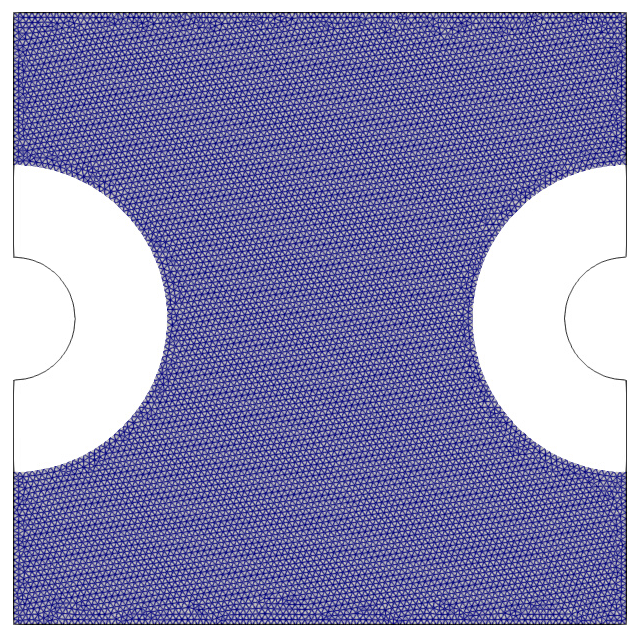}
        \caption{ Mesh $\mathcal M_0$ of the reference domain $\Omega_0$.}
        \label{fig:Tensile2D ref mesh}
     \end{subfigure}
\caption{\label{Tensile2D geo offline} Reference and target domains, with the partition used on the boundary of the reference domain.}
\end{figure}

\begin{figure}[ht] 
     \centering
     \includegraphics[scale=0.3]{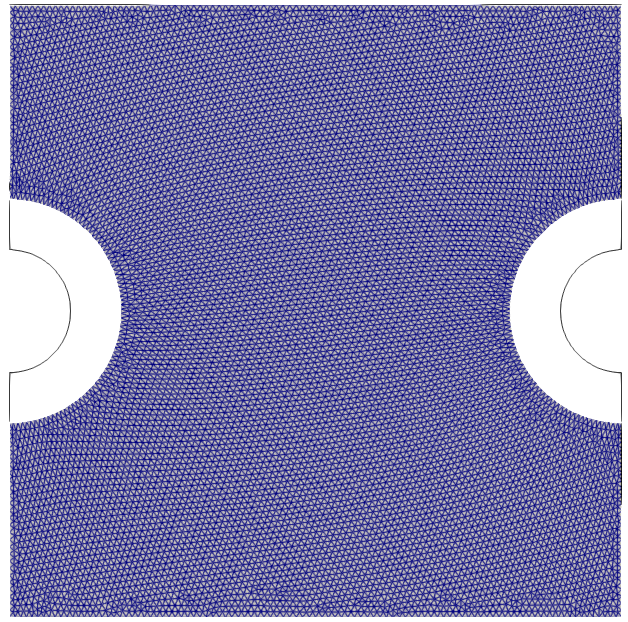}
     \includegraphics[scale=0.3]{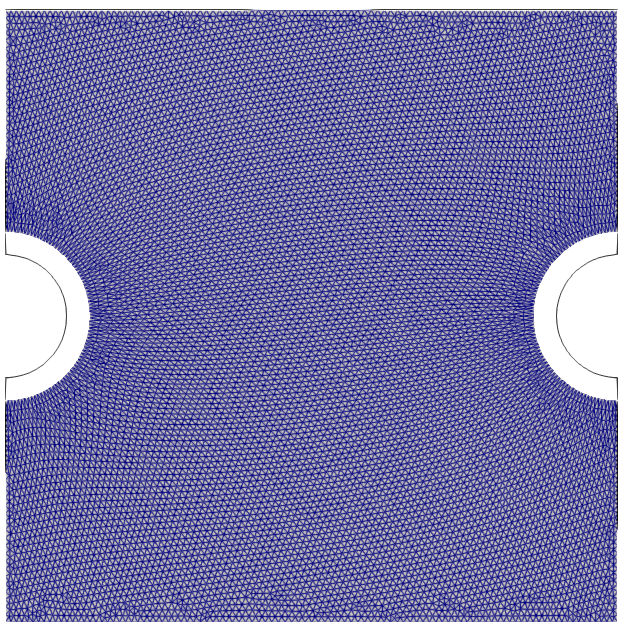}
     \includegraphics[scale=0.3]{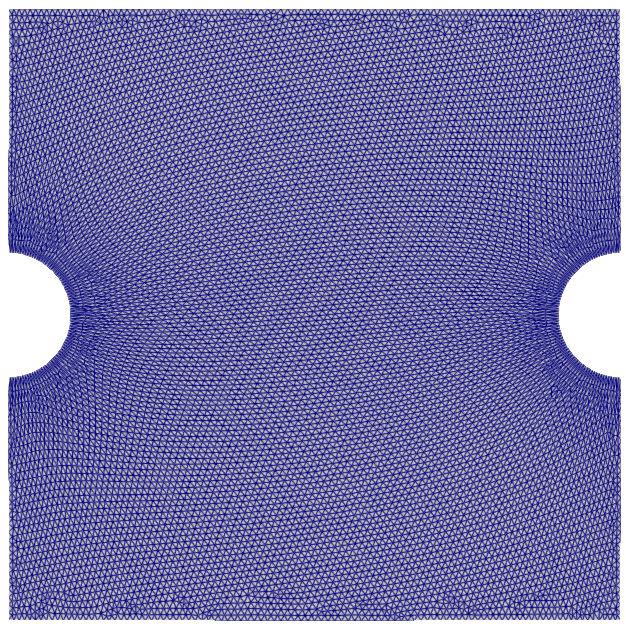}

\vspace{1em}

     \includegraphics[scale=0.3]{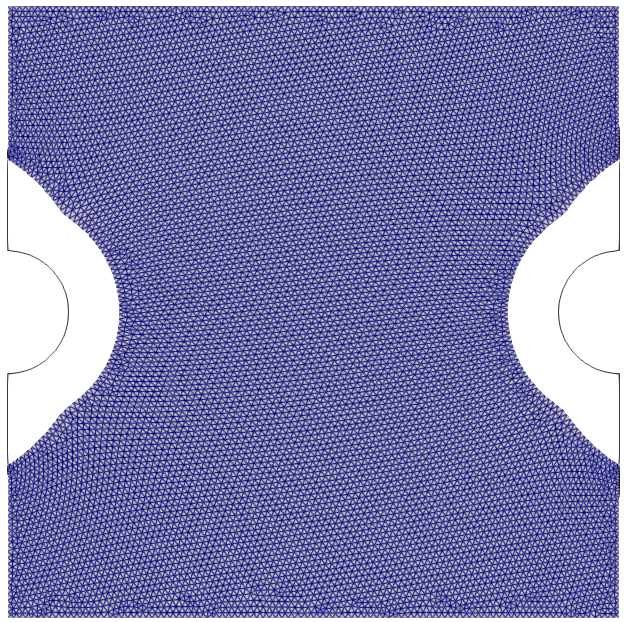}
     \includegraphics[scale=0.3]{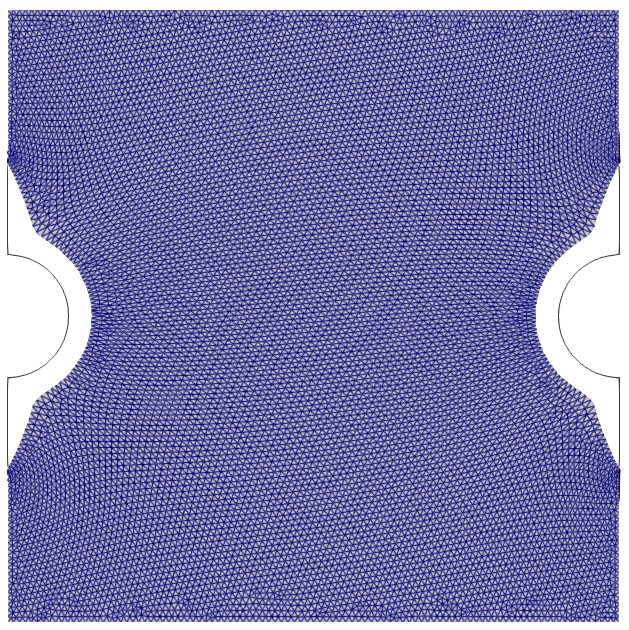}
     \includegraphics[scale=0.3]{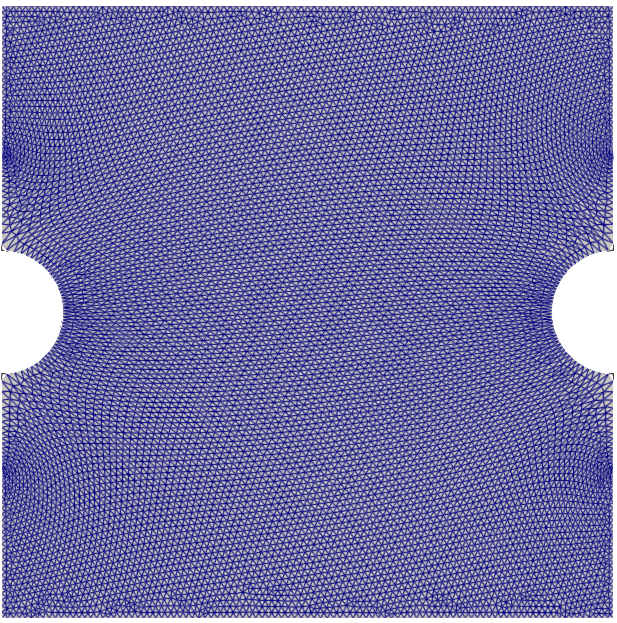}

\caption{\label{fig:Evolution_VDF_SDF}  Evolution of the mesh. Top row: vector distance algorithm;
bottom row: signed distance algorithm. Left column: 15 iterations; middle column: 35 iterations; right column: at convergence.}
\end{figure}

Figure~\ref{Delta_tensile2D} illustrates the behavior of the geometrical error $\Delta_1$ (a) and $\Delta_2$ (b) as a function of the number of iterations of the chosen algorithm (signed distance function ($sdf$) or vector distance function ($vdf$)). We observe that both quantities $\Delta_1^{sdf}$ and $\Delta_1^{\mathrm{vdf}}$ converge to $0$ as the number of iterations increases. However, $\Delta_2^{\mathrm{vdf}}$ also converges to $0$ with respect to the number of iterations, whereas $\Delta_2^{sdf}$ does not. 
\begin{figure}[th] 
     \centering
     \begin{subfigure}[b]{0.49\textwidth}
         \centering
        \includegraphics[scale=0.6]{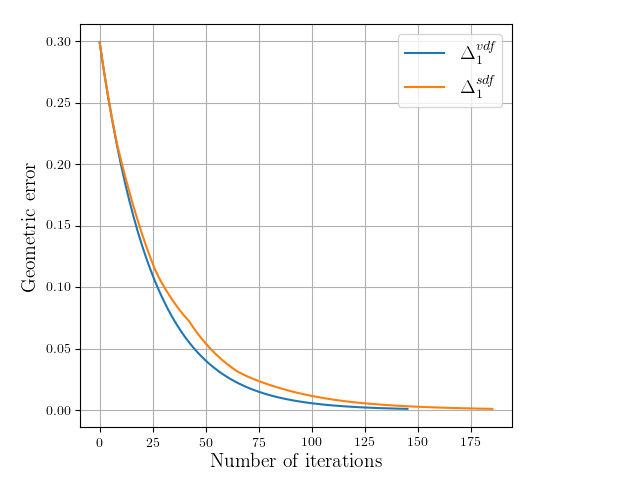}
     \caption{ Evolution of $\Delta_1$.}
     \end{subfigure}
     \hfill
     \begin{subfigure}[b]{0.49\textwidth}
         \centering
\includegraphics[scale=0.6]{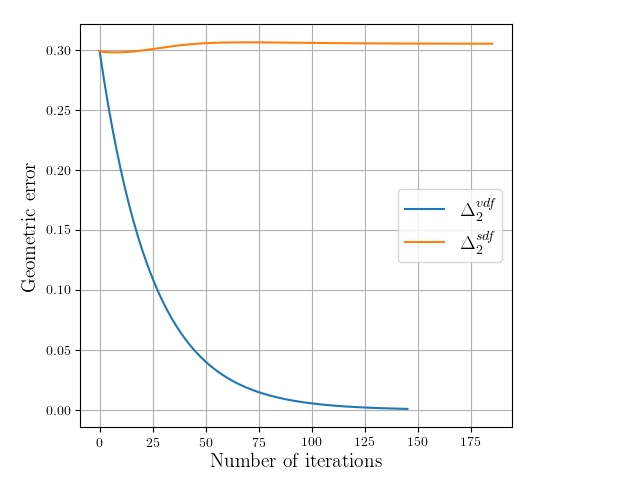}
\caption{ Evolution of $\Delta_2$.}
     \end{subfigure}
\caption{\label{Delta_tensile2D} Evolution of $\Delta_1$ and $\Delta_2$ for the two algorithms for one of the samples. Using the vector distance algorithm (so that $\Delta_2$ converges $0$), we also have that $ \Delta_1$ converges to $0$. This is not necessarily the case when using the signed distance algorithm. We can have that $\Delta_1$ converges to $0$ without having $\Delta_2$ converging to $0$. }
\end{figure}

Figure~\ref{Delta_tensile2D_samples} shows the average, minimum and maximum values of $\Delta_1$ (a) and $\Delta_2$ (b) as a function of the number of iterations of the algorithm. We observe that both quantities $\Delta_1^{\mathrm{vdf}}$ and $\Delta_2^{\mathrm{vdf}}$ converge exponentially to $0$ with respect to the number of iterations, which is not the case of $\Delta_1^{sdf}$. This highlights another advantage of the vector distance algorithm in comparison to the signed distance algorithm. 

\begin{figure}[htb] 
     \centering
     \begin{subfigure}[b]{0.46\textwidth}
         \centering
        \includegraphics[scale=0.30]{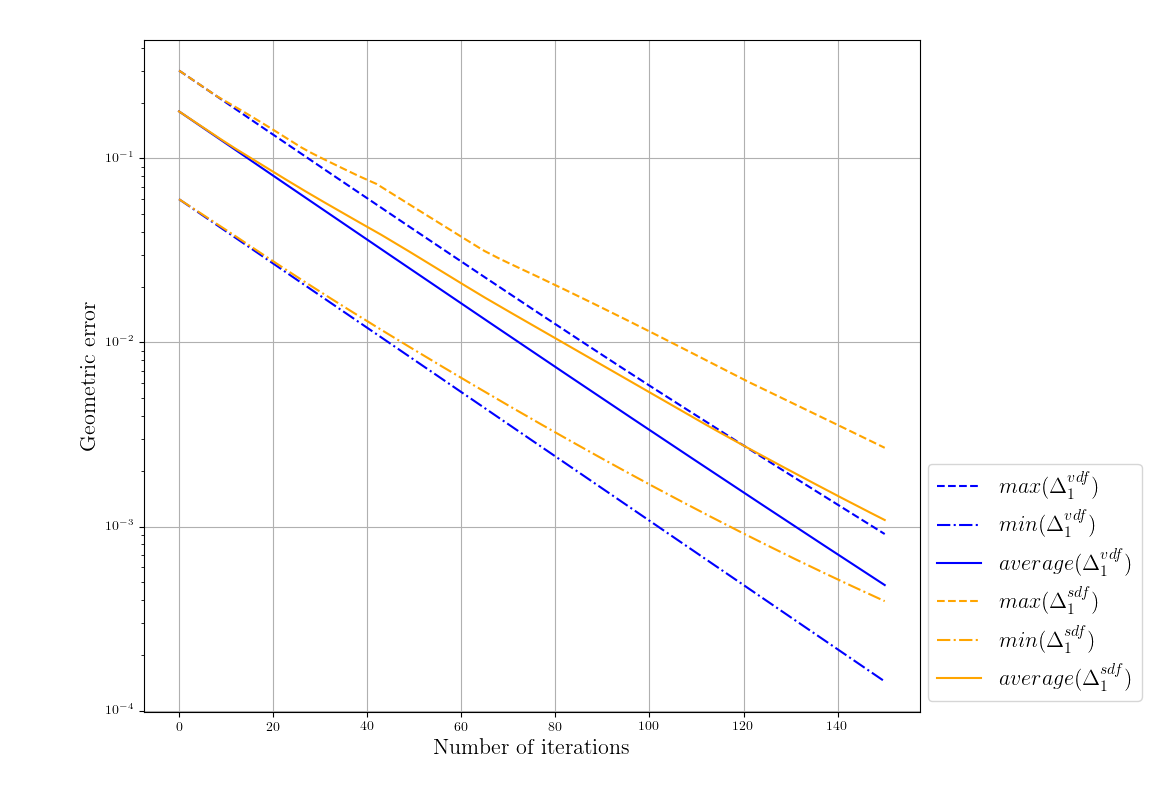}
     \caption{ Average, maximum and minimum error $\Delta_1$ on a subset of 10 samples calculated using the two formulations. }
     \end{subfigure}
     \hfill
     \begin{subfigure}[b]{0.46\textwidth}
         \centering
\includegraphics[scale=0.30]{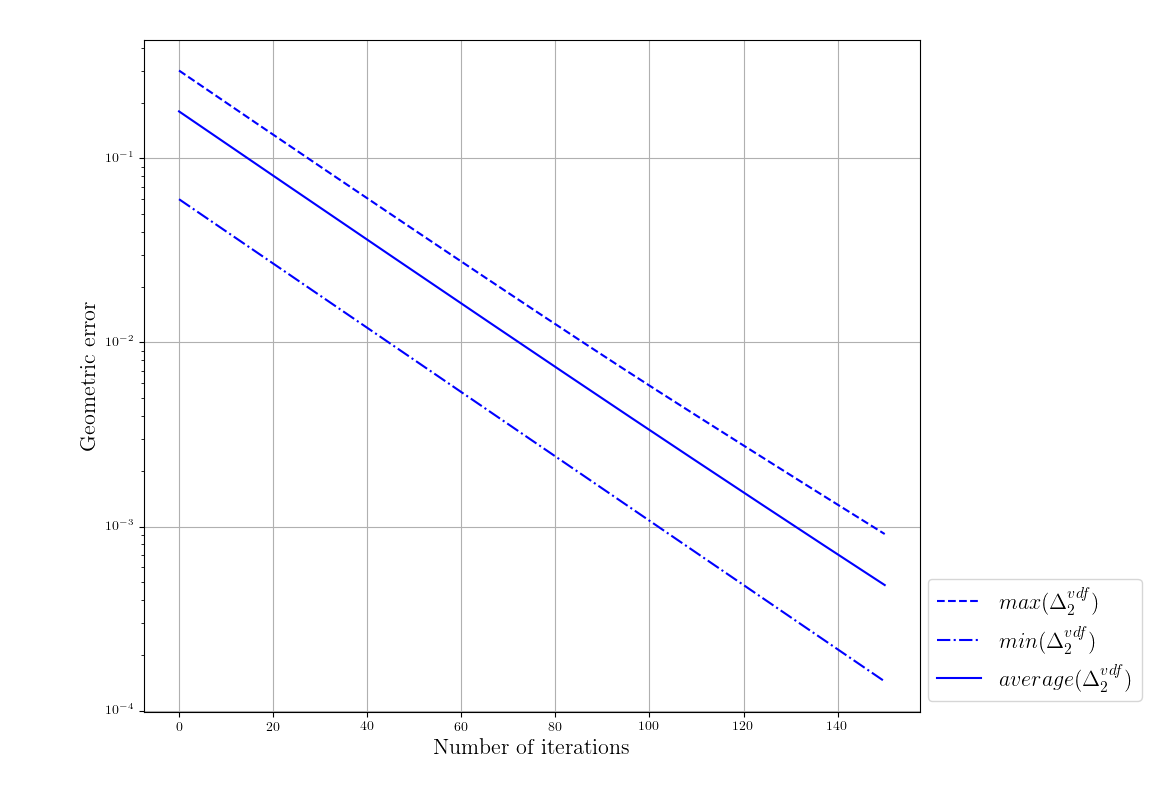}
\caption{ Average, maximum and minimum error $\Delta_2$ on a subset of 10 samples calculated using the vector distance formulation.}
     \end{subfigure}
\caption{\label{Delta_tensile2D_samples} Average, minimum and maximum geometrical errors $\Delta_1$ and $\Delta_2$ on a subset of 10 samples. }
\end{figure}

\rev{In order to illustrate the influence of the reference mesh on the morphing computation, we consider three different meshes of $\Omega_0$. The first one, $\mathcal{M}_0$, is the mesh provided in the dataset, that has approximately 19k volume elements and 562 boundary elements. We consider also $\mathcal{M}_0^{\rm fine}$, a mesh with only 12k volume elements, but which is finer than $\mathcal{M}_0$ at the boundary, with up to 900 boundary elements. Finally, we consider a coarse mesh, $\mathcal{M}_0^{\rm coarse}$, with 158 volume elements and 39 boundary elements. The three reference meshes are shown in Figure \ref{fig:diffrent refMesh}, and in Figure \ref{errors_different_meshes}, we report the geometrical error $\Delta_2^{\mathrm{vdf}}$ on the three reference meshes and the same target mesh $\mathcal{M}$, as a function of the number of iterations. Notice that the $L^\infty$-norms in~\eqref{eq:Delta2} are evaluated either at the boundary nodes and 9 additional internal points in each boundary edge (solid lines) or only at boundary nodes (dashed lines). The first observation is that the error gets smaller as the number of boundary elements increases, whereas the number of elements inside the domain is much less relevant. Furthermore, evaluating the $L^\infty$-norm only at the boundary nodes leads (expectedly) to smaller errors. For the coarse mesh, the difference is quite pronounced as the errors become smaller (below $10^{-3}$), but this is not the case for the finer meshes. We can draw two conclusions: (i)
to better approximate the target mesh $\mathcal{M}$, the reference mesh should be sufficiently refined near the boundary; (ii) for geometrical errors of the order of $10^{-3}$, evaluating the $L^\infty$-norm at boundary nodes is sufficient. We will use this setting in what follows.}

\begin{figure}[htb]
   \centering

     \begin{subfigure}[b]{0.3\textwidth}
         \centering
         \includegraphics[scale=0.3]{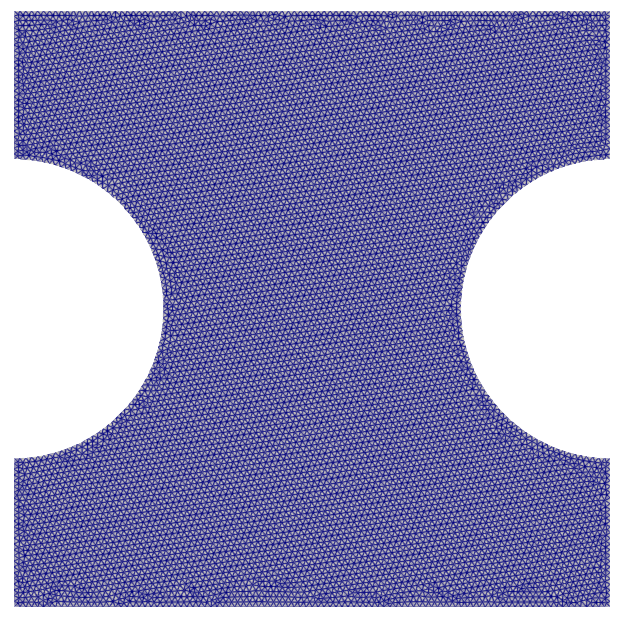}
     \caption{Reference mesh $\mathcal{M}_0$ in the dataset.}
     \end{subfigure}
     \hfill
     \begin{subfigure}[b]{0.3\textwidth}
         \centering
\includegraphics[scale=0.3]{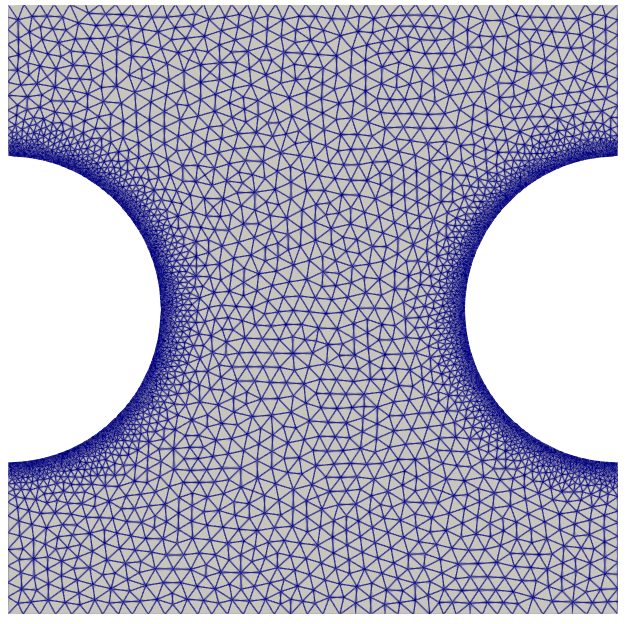}
\caption{Refined mesh near the boundary, $\mathcal{M}_0^{\rm fine}$.}
     \end{subfigure}
      \hfill
          \begin{subfigure}[b]{0.3\textwidth}
         \centering
        \includegraphics[scale=0.3]{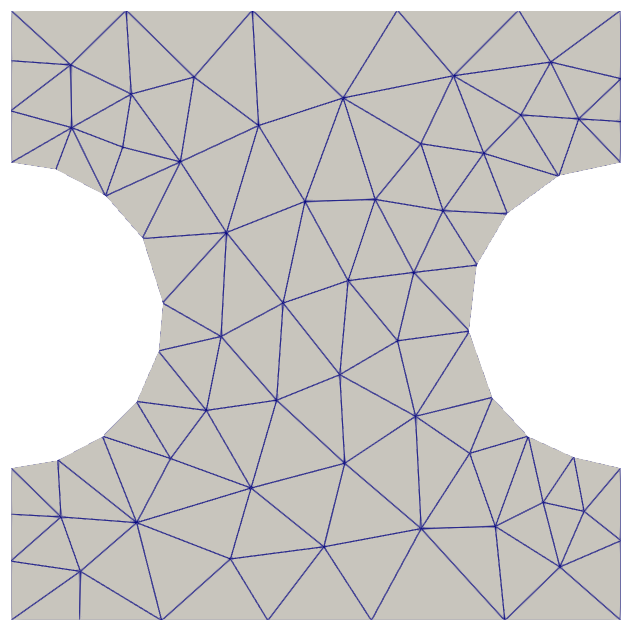}
        \caption{Coarse mesh, $\mathcal{M}_0^{\rm coarse}$. }
     \end{subfigure}
\caption{\label{fig:diffrent refMesh} \rev{Three meshes of the reference domain $\Omega_0$.}}
\end{figure}

\begin{figure}[htb] 
\begin{center}
\includegraphics[scale=0.5]{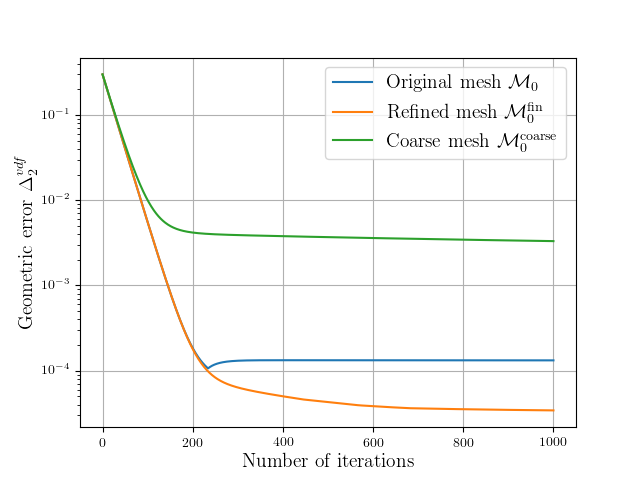}
\caption{\label{errors_different_meshes} \rev{Geometric error $\Delta_2^{\mathrm{vdf}}$ on the three meshes shown in Figure~\ref{fig:diffrent refMesh}. Solid lines: $L^\infty$-norms in~\eqref{eq:Delta2} evaluated at the boundary nodes and 9 additional internal points in each boundary edge; dashed lines: $L^\infty$-norms evaluated only at boundary nodes.}}
\end{center}
\end{figure}

\rev{Finally, we assess the quality of the morphed mesh in terms of shape-regularity
evaluated as the maximum over the mesh elements of the ratio of the cell diameter to the 
length of the smallest edge. Results are reported in the left panel of Figure~\ref{fig:Mesmetric}.
We observe that the mesh quality somewhat deteriorates, with an increase of the 
shape-regularity parameter by a factor of three at the end of the iterations. This seems reasonable in view of the large tangential deformations required in this test case. }

\begin{figure}[htb]
     \centering
     \includegraphics[scale=0.35]{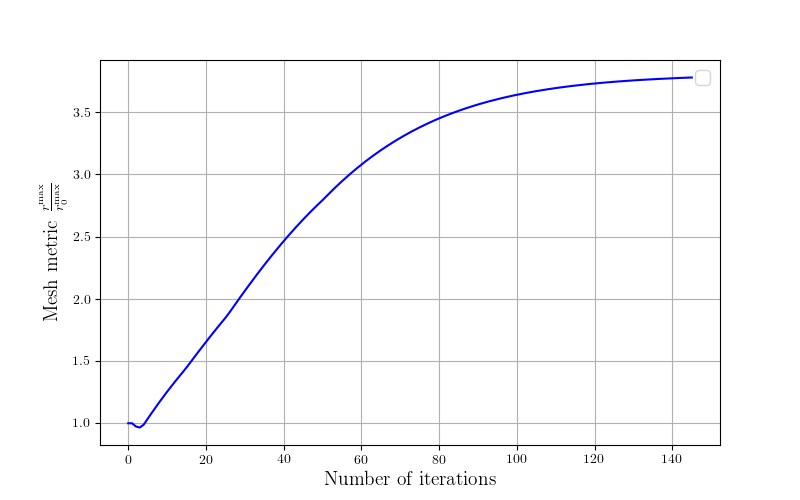} \qquad
     \includegraphics[scale=0.35]{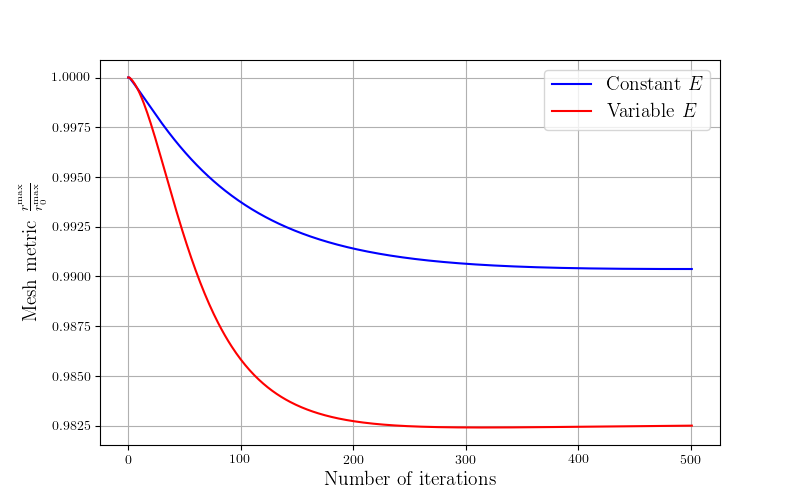}
\caption{\label{fig:Mesmetric} \rev{Shape-regularity of the morphed meshes 
(evaluated as the maximum over the mesh elements of the ratio of the cell diameter to the 
length of the smallest edge) as a function of the number of iterations. Left panel: Tensile2D dataset; Right panel: AirfRANS dataset (see Section~\ref{AirfRANS offline}).}}
\end{figure}

\subsubsection{AirfRANS dataset} \label{AirfRANS offline}
In this second test, we consider 2D airfoils taken from the dataset in \cite{bonnet2022airfrans}. \rev{The airfoils in the dataset are generated from some parametrization, but, once again, this parametrization is not used in the morphing computation.}

We choose two samples, one as the reference and the other as the target domain. The morphing should map the lower wing surface (resp., the upper wing surface) of the airfoil $\Omega_0$ to the lower wing surface (resp., the upper wing surface) of the airfoil $\Omega$. We consider $N_p=2$ target points, the two points being located at the leading edge $(0,0)$ and the trailing edge $(1,0)$ of the wing. At the start of the algorithm, these points coincide in the reference and the target geometries, but do not remain coincident through all the iterations. The external boundary representing the far field is fixed. The mesh of $\Omega_0$ that is used to compute the morphing is different than the mesh provided in the dataset. To alleviate the computational burden, we use a coarse shape-regular mesh of $\Omega_0$ with approximately 8000 elements. Morphings that are computed on the coarse mesh can then be interpolated on the original finer meshes of the dataset (see Figure \ref{Naca mesh offline after}). Note, however, that this step may be delicate since the interpolation of the morphing may not preserve bijectivity in general, although we never encountered this issue in our numerical results. 

The parameters used for the simulations are $E:=0.1, \nu:=0.3,\alpha:=500$, $\gamma=5$, $\beta_1:=10$ and $\beta_2:=1$. We observe that the signed distance algorithm does not always converge: this is the reason why we only present results obtained with the vector function algorithm on the AirfRANS dataset. 
The value of the stopping criterion is chosen to be equal to $\epsilon = 5 \times 10^{-4}$. Convergence is obtained after 492 iterations, in about 92 seconds. The evolution of the deformation of the reference airfoil is shown in Figure~\ref{fig:Evolution_VDF_naca} after 50, 100 and 492 iterations.

\begin{figure}[ht] 
     \centering
     \begin{subfigure}[b]{0.3\textwidth}
         \centering
        \includegraphics[scale=0.1]{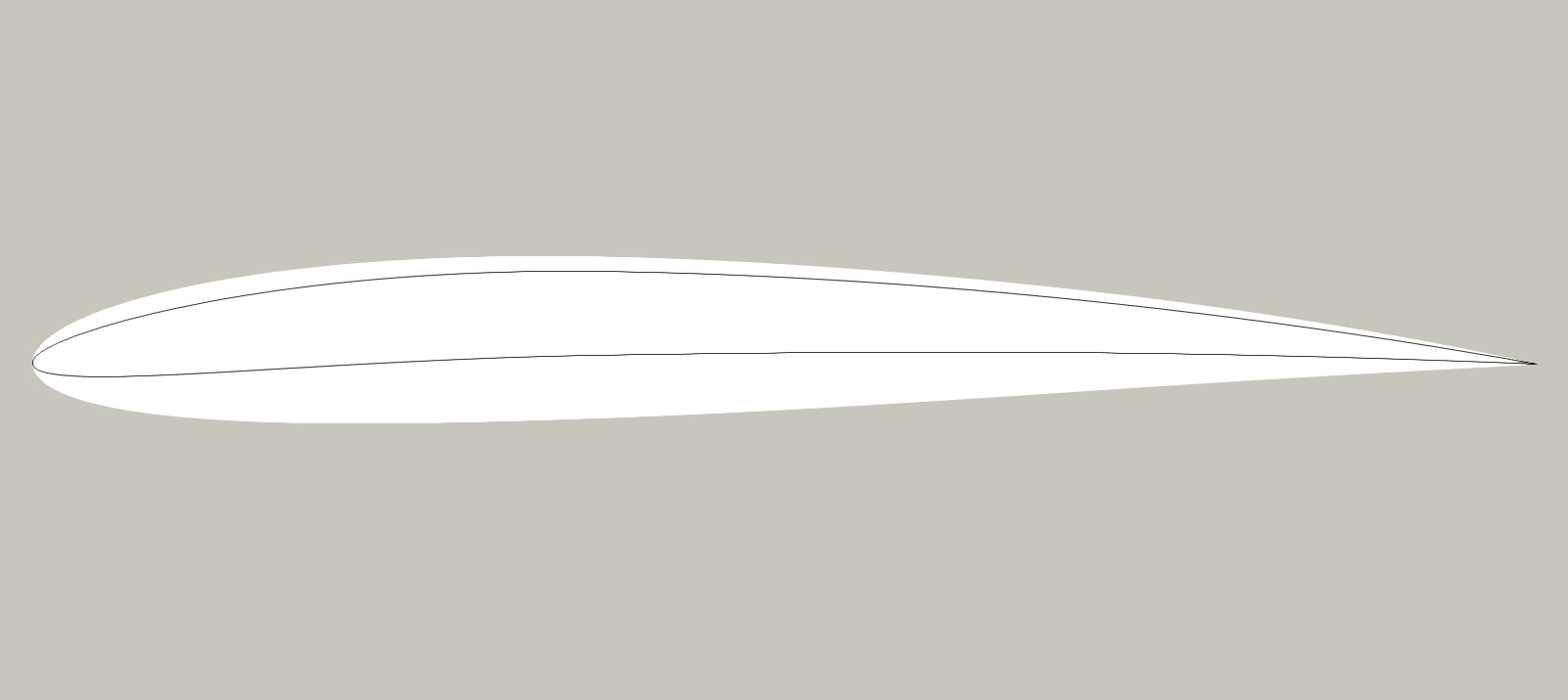}
        \caption{Deformed airfoil after 50 iterations.}
     \end{subfigure}
     \hfill
     \begin{subfigure}[b]{0.3\textwidth}
         \centering
\includegraphics[scale=0.1]{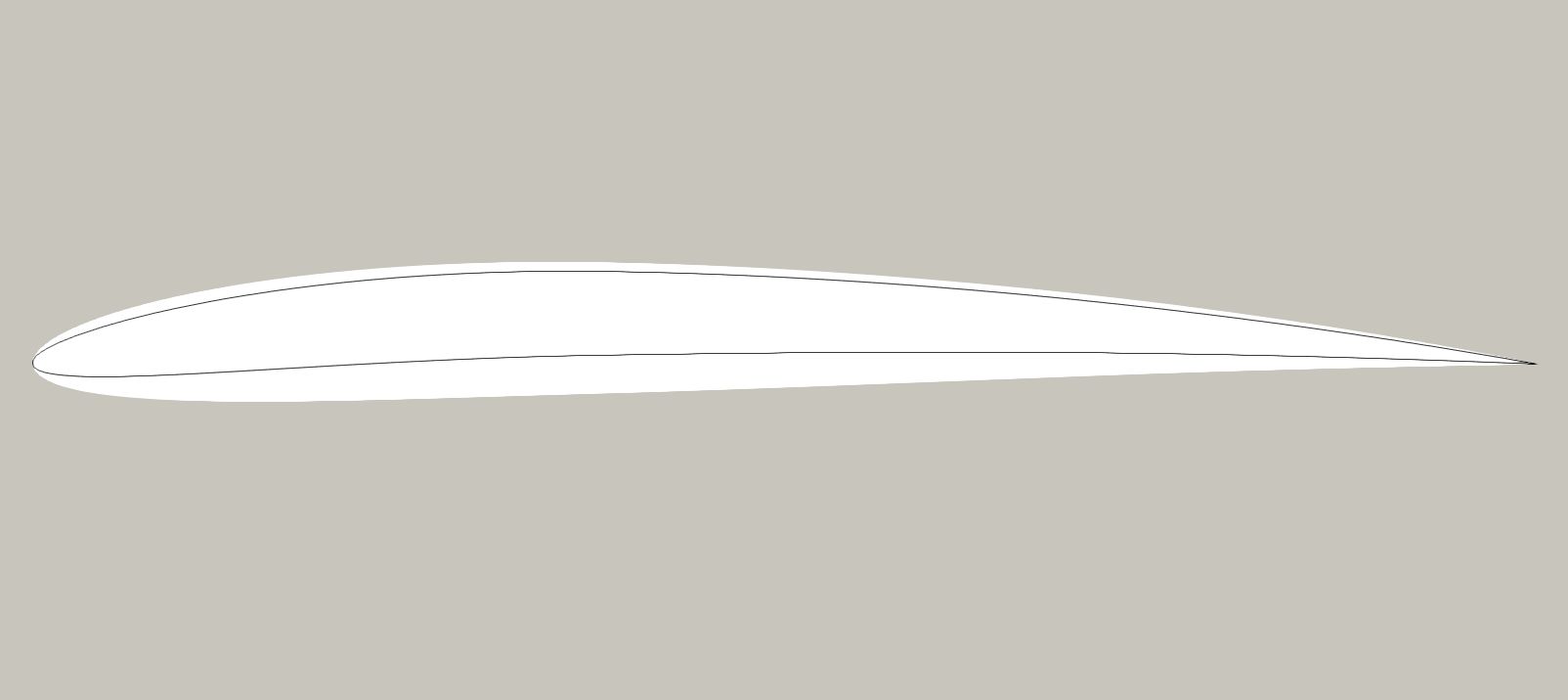}
     \caption{Deformed airfoil after 100 iterations.}
     \end{subfigure}
     \hfill
     \begin{subfigure}[b]{0.3\textwidth}
         \centering
\includegraphics[scale=0.1]{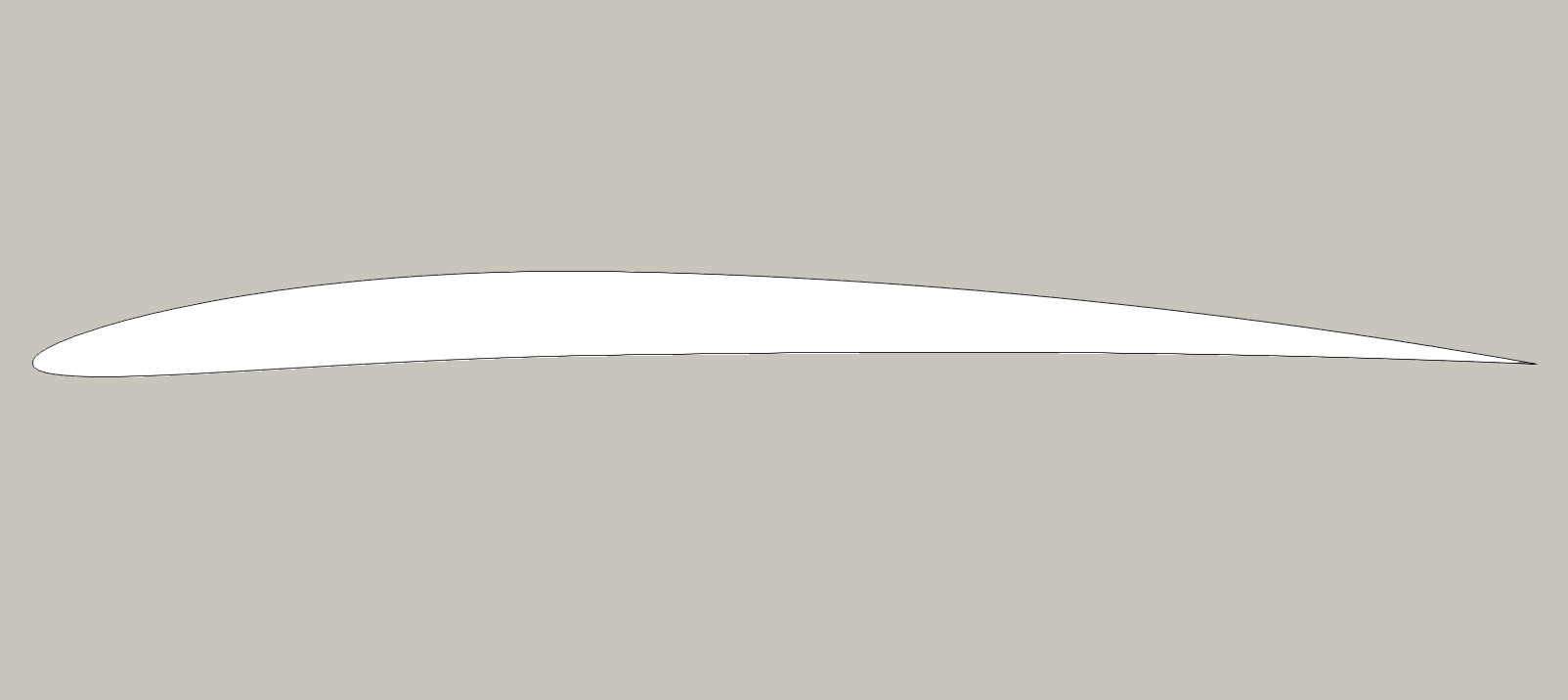}
     \caption{Deformed airfoil at convergence.{\color{white} align images}}
            
     \end{subfigure}
\caption{\label{fig:Evolution_VDF_naca}  Evolution of the airfoil using the vector distance algorithm.}
\end{figure}

\begin{figure}[ht]
     \centering
     \begin{subfigure}[b]{0.3\textwidth}
         \centering
        \includegraphics[scale=0.2]{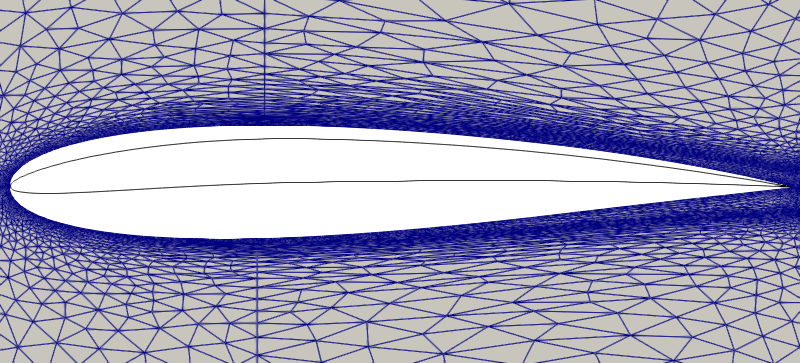}
     \caption{Mesh used to calculate the morphing.\vspace{1.1em}}
     \end{subfigure}
     \hfill
          \begin{subfigure}[b]{0.3\textwidth}
         \centering
        \includegraphics[scale=0.2]{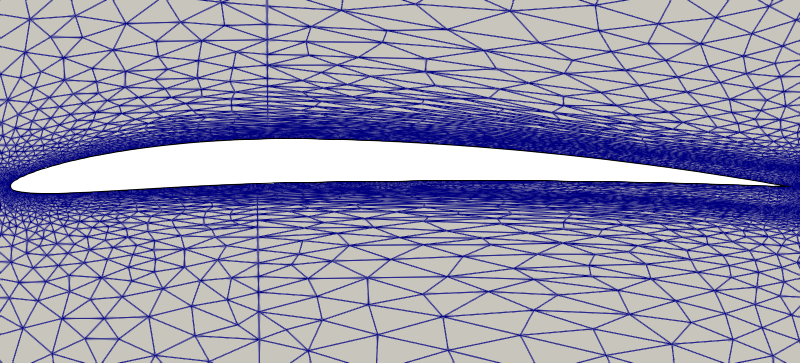}
     \caption{Deformation of the mesh at convergence.\vspace{1.1em}}
     \end{subfigure}
     \hfill
     \begin{subfigure}[b]{0.3\textwidth}
         \centering
\includegraphics[scale=0.2]{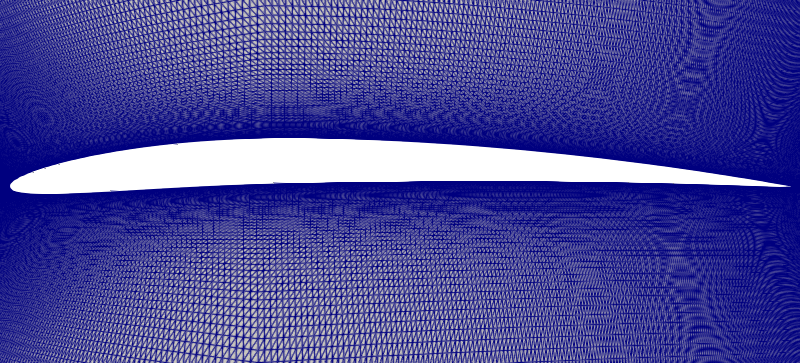}
\caption{Morphed mesh in the dataset obtained by interpolation. }
     \end{subfigure}
\caption{\label{Naca mesh offline after} Morphing of the two meshes.}
\end{figure}

In Figure~\ref{Delta_2_naca_samples} we plot the average, minimum and maximum value of $\Delta_2^{\mathrm{vdf}}$ as a function of the number of iterations over a set of $10$ samples. As in the previous test case, we observe that the vector distance algorithm converges exponentially with respect to the number of iterations. \rev{Finally, in the right panel of Figure~\ref{fig:Mesmetric}, we report the shape-regularity parameter of the morphed meshes as a function of the iterations. Here, we only consider the shape-regularity of the mesh elements touching the airfoil. We observe that the shape-regularity parameter remains practically unmodified along the iterations. A known device from the literature \cite{baker2002mesh,masud2007adaptive} to improve the shape-regularity of the morphed mesh is to consider a spatially variable Young modulus depending on the size of the mesh elements. Using this technique allows a (slight) further improvement on mesh quality (see the red curve in Figure~\ref{fig:Mesmetric}).}

\begin{figure}[ht] 
\begin{center}
\includegraphics[scale=0.29]{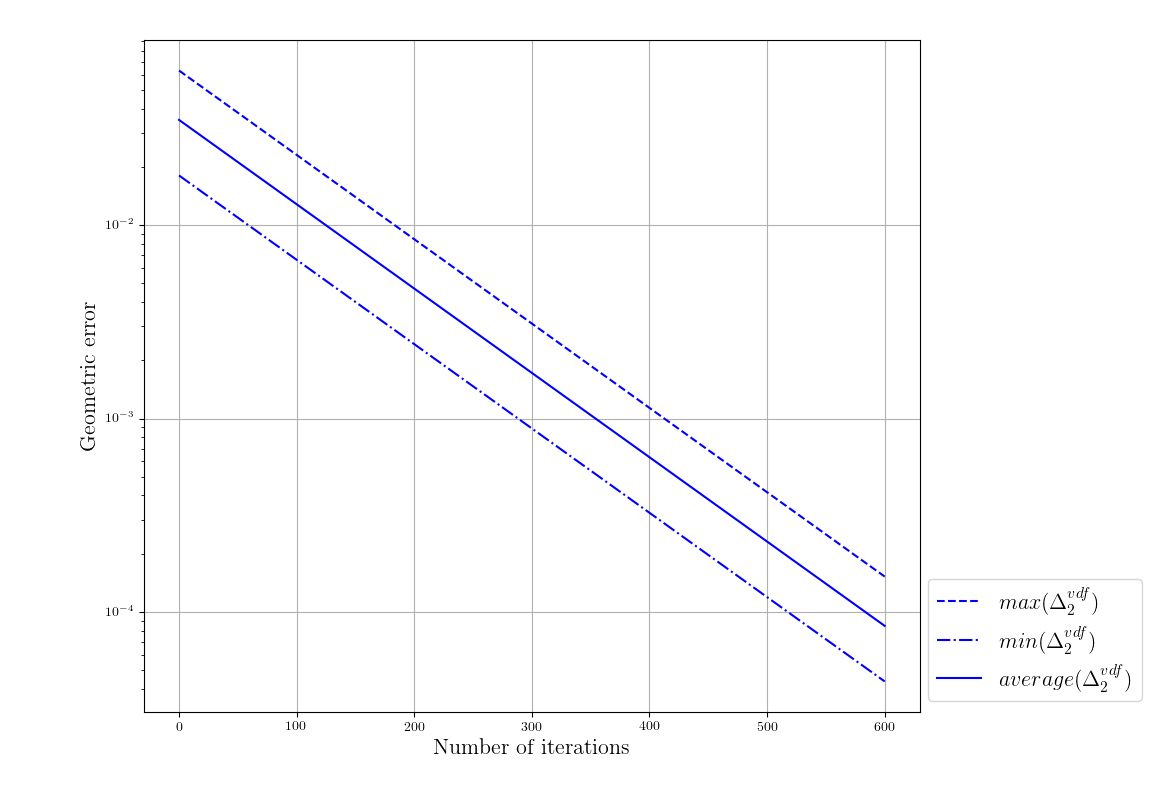}
\caption{\label{Delta_2_naca_samples} Average, maximum and minimum geometrical errors $\Delta_2$ in logarithmic scale on a subset of 10 samples, using the vector distance algorithm.  }
\end{center}
\end{figure}

\FloatBarrier

\section{Reduced-order modeling with geometric variability} \label{Section Online}
The proposed high-fidelity morphing technique requires the resolution of a linear elasticity problem at each iteration, a process which can be time-consuming. In the context of model-order reduction with geometric variability, fast computation of this morphing is crucial for deriving efficient reduced-order models. Therefore, we introduce here a reduction technique aiming at speeding up these calculations to improve overall efficiency.
\rev{Since the numerical tests of the previous section showed the superiority of the vector distance algorithm from Section~\ref{Shape matching with tags} over the signed distance algorithm from Section~\ref{shape matching opti}, we henceforth use exclusively the former (the latter being also applicable in a reduced-order modeling context).}

\subsection{Offline phase}
Given $n$ target domains $\{\Omega_i\}_{1\leq i \leq n}$ which compose our training set, we start by calculating the $n$ morphings $\{\boldsymbol{\phi}_i\}_{1\leq i \leq n} \subset \boldsymbol{\mathcal T}_{\Omega_0}$ from a fixed reference domain $\Omega_0$ to each target domain $\Omega_i$ so that $\boldsymbol{\phi}_i(\Omega_0)=\Omega_i$ for all $1\leq i \leq n$. This can be done using either the signed distance algorithm or the vector distance algorithm presented in the previous section. 
\rev{We emphasize that, in the offline phase, the morphings are constructed by iterating to convergence the iterative algorithm as described in Section~\ref{Implementaion details}.}

We then apply snapshot-POD (Proper Orthogonal Decomposition) on the family of displacement fields $\bpsi_i:=\boldsymbol{\phi}_i-\bId$ ($1\leq i \leq n$) with respect to the $\boldsymbol{L}^2(\Omega_0)$-inner product. We denote by $\lambda_1 \leq \ldots \leq \lambda_n$ the $n$ eigenvalues of the correlation matrix $\boldsymbol{C}:=(\langle \bpsi_i, \bpsi_j \rangle_{\boldsymbol{L}^2(\Omega_0)})_{1\leq i,j\leq n} \in \mathbb{R}^{n\times n}$ and by $\{\bzeta_i\}_{1\leq i \leq n} \subset \boldsymbol{W}^{1,\infty}(\Omega_0)$ the corresponding POD modes. For a given number $r\in \mathbb{N}^*$ of selected POD modes, we introduce the mapping
\begin{align}  
        \boldsymbol{\varphi}_r: \R^{r} \ni \alpha:= (\alpha_j)_{1\leq j \leq r}   \longmapsto  \boldsymbol{\varphi}_r(\alpha) := \bId +  \sum_{j=1}^r \alpha_j \boldsymbol{\zeta}_j \in  \boldsymbol{W}^{1,\infty}(\Omega_0).
\end{align}   
For all $1\leq i \leq n$, we define the vector $\alpha^i=(\alpha_j^i)_{1\leq j \leq r} \in \R^r$ such that 
$$
\forall 1\leq j \leq r, \; \alpha_j^i:= \langle \bphi_i - \bId, \bzeta_j \rangle_{\boldsymbol{L}^2(\Omega_0)} =  \langle \bpsi_i, \bzeta_j \rangle_{\boldsymbol{L}^2(\Omega_0)},
$$
so that $\sum_{j=1}^r \alpha_j^i \bzeta_j$ is the $\boldsymbol{L}^2(\Omega_0)$-orthogonal projection of $\bpsi_i := \bphi_i - \bId$ onto ${\rm Span}\{\bzeta_1, \ldots, \bzeta_r\}$. Each morphing $\bphi_i$ can then be approximated as 
\begin{align} \label{morphing approximation}
\boldsymbol{\phi}_i \approx \boldsymbol{\varphi}_r(\alpha^i) := \bId +  \sum_{j=1}^r \alpha^i_j \boldsymbol{\zeta}_j = \bId + \sum_{j=1}^r \langle \boldsymbol{\phi}_i-\bId,\boldsymbol{\zeta}_j\rangle_{\boldsymbol{L}^2(\Omega_0)} \boldsymbol{\zeta}_j,
\end{align}
and each geometry $\Omega_i=\boldsymbol{\phi}_i(\Omega_0)$ can be identified with the vector $\alpha^i \in \R^r$.

In general, one can choose the value $1\leq r\leq n$ in one of the following two ways:
\begin{enumerate}
    \item[(i)] For a prescribed tolerance $\delta^{\mathrm{POD}}>0$, choose $r$ as the smallest positive integer such that
\begin{align} \label{eq:critere_POD}
1-\frac{ \displaystyle \sum_{j=1}^{r} \lambda_j }{\displaystyle  \sum_{j=1}^n \lambda_j} \leq \delta^{\mathrm{POD}}.
\end{align}    
This criterion aims at controlling the accuracy of the reconstruction of the morphings using the first $r$ POD modes in $\boldsymbol{L}^2(\Omega_0)$-norm.
\item[(ii)] For a prescribed tolerance $\delta^{\mathrm{geo}}>0$, we choose $r$ as the smallest integer such that
\begin{align}\label{Hausdorff criterion}
    \max_{1\leq i\leq n} \Delta_2(\boldsymbol{\varphi}_r(\alpha^i), \Omega_0,\Omega_i) < \delta^{\mathrm{geo}},
\end{align}
where the error measure $\Delta_2$ is defined in~\eqref{eq:Delta2}. This second criterion allows one 
to control more directly the geometrical error between $\boldsymbol{\varphi}_r(\alpha^i)(\Omega_0)$ and $\Omega_i$. \rev{Here, $\delta^{\mathrm{geo}}$ should be taken of the order of the size of the boundary elements of the morphed reference meshes from the training set.}
\end{enumerate}
\rev{In what follows, we focus on the second criterion~\eqref{Hausdorff criterion}. Moreover,}
we always ensure that $r$ is large enough so that, for all $1\leq i \leq n$, the POD approximation $\bvarphi_r(\balpha^i)$ is indeed a 
diffeomorphism from $\Omega_0$ onto $\bvarphi_r(\balpha^i)(\Omega_0)$.

\begin{remark}[Geometry vs.~vector $\alpha$]
The correspondence between a geometry $\Omega_i$ and a vector $\balpha^i \in \mathbb{R}^r$ is not necessarily unique: for a geometry $\Omega_i$ such that $\Delta_2(\boldsymbol{\varphi}_r(\alpha^i), \Omega_0,\Omega_i) < \delta^{\mathrm{geo}}$, we may find another vector $\Bar{\alpha}$ such that $\Delta_2(\boldsymbol{\varphi}_r(\Bar{\alpha}),\Omega_0,\Omega_i) < \delta^{\mathrm{geo}}$ as well, without having $\boldsymbol{\phi}_i=\boldsymbol{\varphi}_r(\alpha^i)$ up to the error on the POD. In other terms, we can find multiple morphings mapping $\Omega_0$ onto $\Omega_i$ in the affine space $\bId + {\rm Span}\{ \bzeta_1, \ldots, \bzeta_r\}$.
\end{remark}

\subsection{Online phase}\label{reduced problem}
Given a new geometry $\widetilde{\Omega} \subset \mathbb{R}^d$ that is a domain of $\mathbb{R}^d$, we search for a morphing $ \widetilde{\boldsymbol{\phi}} \in \boldsymbol{\mathcal T}_{\Omega_0}$ in the affine space $\bId + {\rm Span}\{\boldsymbol{\zeta}_{i} \}_{ 1\leq i \leq r}, $ so that $\widetilde{\boldsymbol{\phi}}(\Omega_0)$ is close to $\widetilde{\Omega}$ with respect to the criterion \rev{$\Delta_2$ defined in~\eqref{eq:Delta2}}. More precisely, the morphing $\widetilde{\boldsymbol{\phi}}$ will be computed as $\widetilde{\boldsymbol{\phi}}=\boldsymbol{\varphi}_r(\widetilde{\alpha})$ for some $\widetilde{\balpha} \in \R^{r}$.

\rev{W}e introduce the functional
\begin{align} \label{I}
          \mathcal{B}: \R^{r} \ni \alpha \longmapsto \mathcal{B}(\alpha) := (\mathcal{B}_j(\alpha))_{1\leq j \leq r} \in  \R^{r},   
\end{align}
such that, for all $1\leq j \leq r$ and all $\balpha \in \mathbb{R}^r$,
\begin{align}    
    \mathcal{B}_j(\alpha) := {}& b^p_{\boldsymbol{\varphi}_r(\alpha)}(\boldsymbol{\zeta}_j\circ \boldsymbol{\varphi}_r^{-1}(\alpha)) +  b^l_{\boldsymbol{\varphi}_r(\alpha)}(\boldsymbol{\zeta}_j\circ \boldsymbol{\varphi}_r^{-1}(\alpha)) \nonumber \\
    = {}& \beta_1  \sum_{k=1}^{N_{p}}  \displaystyle \int_{N(\boldsymbol{\varphi}_r(\alpha)(\rbP_{k}^0))}(\rbP_{k}- \boldsymbol{\varphi}_r(\alpha)(\rbP_{k}^0))\cdot \boldsymbol{\zeta}_j\circ \boldsymbol{\varphi}_r^{-1}(\alpha)(\boldsymbol{x}) \,ds \nonumber\\
    & + \beta_2 \int_{\boldsymbol{\varphi}_r(\alpha)(\partial\Omega_0)}\left(\boldsymbol{D}^{\partial\widetilde{\Omega}}_{\boldsymbol{\varphi}_r(\alpha)} \cdot \bn_{\boldsymbol{\varphi}_r(\alpha)} \right) \left(\boldsymbol{\zeta}_j\circ \boldsymbol{\varphi}_r^{-1}(\alpha) \cdot \boldsymbol{n}_{\boldsymbol{\varphi}_r(\alpha)} \right)(\boldsymbol{x})\, ds, \label{I explicit}
\end{align}
where $b^p_{\boldsymbol{\varphi}_r(\alpha)}$ and $b^l_{\boldsymbol{\varphi}_r(\alpha)}$ are defined in \eqref{b1} and  \eqref{b2}, respectively. 

The online procedure we propose to compute $\widetilde{\balpha}$ is an iterative procedure which we now describe. 

\subsubsection{Initialization using the vector distance function} \label{Initialization}
In line with the high-fidelity construction of the morphing, we could initialize the online iterative procedure so that $\boldsymbol{\phi}^{(0)}=\boldsymbol{\varphi}_r(\widetilde{\alpha}^{(0)})=\bId$. This corresponds to $\widetilde{\alpha}^{(0)}=0_{\R^r}$. However, this approach was observed to yield results which were not satisfactory, neither from an accuracy nor from an efficiency point of view. The initialization procedure we propose here remedies these shortcomings. It builds on the observation that if the new geometry $\widetilde{\Omega}$ is close to one of the geometries belonging to the training set, one should be able to use this information to initialize the algorithm with a solution near an optimal solution. The idea is to rely on the construction of an appropriate  regression model. More precisely, suppose that we have (or we can determine) a quantity that defines each geometry in the dataset. We can then build a regression metamodel that, for a given geometry $\widetilde{\Omega}$, takes as input that quantity and produces as output the morphing coordinates $\widetilde{\balpha} \in \mathbb{R}^r$.

We propose to use the vector distance function defined in \eqref{vectDistance} by proceeding as follows:
\begin{enumerate}
    \item In the offline phase:
    \begin{enumerate}
        \item For each geometry $\Omega_i$, calculate the function $\boldsymbol{D}^{\partial\Omega_i}_{\bId} $ such that
        \begin{align} \boldsymbol{D}^{\partial\Omega_i}_{\bId} :
        \partial\Omega_0 \ni \boldsymbol{x} \longmapsto  \boldsymbol{D}^{\partial\Omega_i}_{\bId}(\boldsymbol{x}):= \sum_{k=1}^{N_l}(\boldsymbol{\Pi}_{L^i_{k}}(\boldsymbol{x})-\boldsymbol{x})1_{L_{k}^0}(\boldsymbol{x}) \in \R^2,
        \end{align}
    where $\{L^i_{k}\}_{1\leq k \leq N_l}$ is the set of curves partitioning the boundary of $\Omega_i$. 
    \item Compute the POD of the family of functions $\{\boldsymbol{D}^{\partial\Omega_i}_{\bId}\}_{1 \leq i \leq n}$ in $\boldsymbol{L}^2(\partial \Omega_0)$ and denote by $(\btheta_j)_{1\leq j \leq n}$ the corresponding set of POD modes. Fix some $q\in \mathbb{N}^*$ and for all $1\leq i \leq n$, compute $d^i = \left( d^i_j \right)_{1\leq j \leq q}\in \mathbb{R}^{q}$ as
    $$
    \forall 1\leq j \leq q, \quad d^i_j = \left\langle \boldsymbol{D}^{\partial\Omega_i}_{\bId}, \btheta_j \right\rangle_{\boldsymbol{L}^2(\partial \Omega_0)}.
    $$
    \item Train a regression model that takes as input the vector $d^i \in \R^q$ and as output the generalized coordinates $\alpha^i \in \R^r$ of the morphing $\boldsymbol{\phi}_i$.  Denote by $\mathcal R: \mathbb{R}^q \to \mathbb{R}^r$ the corresponding regression model. 
    \end{enumerate}   
    \item In the online phase:
    \begin{enumerate}
        \item For a new geometry $\widetilde{\Omega}$, calculate the vector distance $\boldsymbol{D}^{\partial\widetilde{\Omega}}_{\bId}$, then project on the low-dimensional representation to obtain the corresponding vector $\widetilde{d} = \left(\widetilde{d}_j\right)_{1\leq j \leq q} \in \R^q$ such that 
        $$
        \forall 1\leq j \leq q, \quad \widetilde{d}_j = \left\langle \boldsymbol{D}^{\partial\widetilde{\Omega}}_{\bId}, \btheta_j \right\rangle_{\boldsymbol{L}^2(\partial \Omega_0)}.
        $$
        \item Define $\widetilde{\alpha}^{(0)} = \mathcal R\left(\widetilde{d} \right)$.
    \end{enumerate}
\end{enumerate}
The regression model used in this work is the Gaussian process regression (GPR)~\cite{rasmussen2006gaussian}.

We make the following two observations. First, for a geometry $\widetilde{\Omega}$, taking $\boldsymbol{D}^{\partial\Tilde{\Omega}}_{\bId}$ as input does not mean that the output of the metamodel will map each point $\boldsymbol{x}\in \partial \Omega_0 $ to its projection onto $\partial \Tilde{\Omega}$. Here, the vector distance is used only to measure in some way the deviation of each $\partial \Tilde{\Omega}$ from $\partial \Omega_0$. 
Second, we emphasize that the above approach is not devised as a means of directly predicting the morphing coefficients without using the iterative algorithm (to be presented in the next section). Indeed, this would lead to two main drawbacks. Firstly, the output of the metamodel does not generally precisely satisfy $\boldsymbol{\varphi}_r(\widetilde{\alpha})(\Omega_0)=\widetilde{\Omega}$. Secondly, it is possible for two different geometries to yield identical inputs $\widetilde{d}$, resulting in the same coefficients $\widetilde{\alpha}$ from the regression model. In conclusion, the above approach merely serves as a means of predicting an initialization $\widetilde{\alpha}^{(0)}$ for the online optimization algorithm, that is (hopefully) sufficiently close to the optimal solution. Thus, even if two distinct geometries share the same initialization during the online phase, they will not produce identical morphings after solving the optimization problem.

\subsubsection{Online iterative algorithm}\label{Optimization algorithm}

To find the final reduced coordinates $\widetilde{\alpha}\in \mathbb{R}^r$ for the new geometry $\Tilde{\Omega}$, we use an iterative algorithm which consists in updating at each iteration $m$ the vector $\widetilde{\alpha}^{(m)}\in \R^r$ as
\begin{align} \label{iterative classic}
    \widetilde{\alpha}^{(m+1)}=\widetilde{\alpha}^{(m)} - \gamma^{(m)} \mathcal{B}(\widetilde{\alpha}^{(m)}),
\end{align}
for some $\gamma^{(m)}>0$ starting from the initial value $\widetilde{\balpha}^{(0)}\in \mathbb{R}^r$ obtained from the initialization procedure described in the previous section. In practice, the value $\gamma^{(m)}$ is always chosen to be equal to some constant value $\gamma>0$ for all iterations $m \in \N$. 

\begin{remark}[Elasticty-based update]
   Another possibility could have been to use an inner product associated with the elasticity bilinear $a_{\boldsymbol{\varphi}_r(\alpha)}$ defined in \eqref{inner product}. We have $a_{\boldsymbol{\varphi}_r(\alpha)}(\boldsymbol{\varphi}_r(\bu),\boldsymbol{\varphi}_r(\bv))=\langle M(\alpha) \bu, \bv \rangle_{\boldsymbol{L}^2(\mathbb{R}^r)}$ with the stiffness matrix $M(\alpha):= \left(a_{\boldsymbol{\varphi}_r(\alpha)}(\bzeta_i, \bzeta_j) \right)_{1\leq i,j \leq r} \in \mathbb{R}^{r\times r}$. The iterative algorithm then becomes   
\begin{equation} \label{iterative elas}
        \alpha^{(m+1)}=\alpha^{(m)} - \gamma^{(m)} M^{-1}(\alpha^{(m)}) \mathcal{B}(\alpha^{(m)}).
\end{equation}
While this inner product actually introduces physical information to deform the mesh, it requires determining, at each iteration, the stiffness matrix $M(\alpha^{(m)})$, which boils down to calculating $\frac{r(r+1)}{2}$ volume integrals. This can be quite costly. The advantage of the approach relying on \eqref{iterative classic} is that we only need to compute surface integrals, instead of computing volume integrals and solving a linear elasticity system at each iteration. Thus, the computational efficiency is much higher than with the approach relying on \eqref{iterative elas}. 
\end{remark}

\subsubsection{Stopping criterion and out-of-distribution geometry}
\rev{Recall the geometrical error tolerance $\delta^{\mathrm{geo}}>0$ introduced above}. Then, for every new geometry $\widetilde{\Omega}$ considered in the online phase, the iterative procedure described in Section \ref{Optimization algorithm} is carried out until the following stopping criterion is met: 
$$\Delta_2(\boldsymbol{\varphi}_r(\Tilde{\alpha}^{(m)}), \Omega_0,\widetilde{\Omega})<\delta^{\mathrm{geo}}.$$
We also choose a value $M_{max}\in \mathbb{N}^*$ corresponding to a maximum number of iterations and an a priori error threshold $\delta_{\nabla}>0$.
One practical way to choose the value of $\delta_{\nabla}$ is to define it as $\displaystyle \delta_{\nabla} := \frac{1}{n}\sum_{i=1}^n \|\mathcal{B}(\boldsymbol{\alpha}^i)\|$. 
If the above stopping criterion is not reached after $M_{max}$ iterations, we evaluate $\eta:=\|\mathcal{B}(\Tilde{\alpha}^{(M_{max})})\|$ 
and proceed as follows:    
\begin{enumerate}
    \item If $\eta  \geq \delta_{\nabla}$, the iterative algorithm did not reach convergence. Depending on the required precision and the cost per iteration, we may allow here to increase the number of iterations $M_{max}$.
    \item On the other hand, if $\eta <\delta_{\nabla}$, this means that the target domain $\widetilde{\Omega}$ cannot be well-approximated in the form $\bphi(\Omega_0)$ for some morphism $\bphi$ computed as an element of $\bId + {\rm Span}\{\boldsymbol{\zeta}_i, \; 1\leq i \leq r\}$. The geometry $\widetilde{\Omega}$ is then classified as being out-of-distribution (ood) and one of the following two steps is performed:
\begin{enumerate}
    \item Either increase the number of modes $r$; this allows for more flexibility in finding a reduced morphing $\widetilde{\bphi}$ such that $\widetilde{\bphi}(\Omega_0)$ is close to $\widetilde{\Omega}$.
    \item Or use the high-fidelity routine to compute a high-fidelity map $\widetilde{\boldsymbol{\phi}}: \Omega_0 \to \widetilde{\Omega}$, possibly initialized with $\displaystyle \widetilde{\boldsymbol{\phi}}^{(0)} = \boldsymbol{\varphi}_r(\Tilde{\alpha}^{(M_{max})})$, update $r := r+1$ and define $\bzeta_{r+1}:= \widetilde{\bphi}$.
\end{enumerate}
\end{enumerate}

\subsection{Overall workflow} \label{Overall workflow}
The following tables summarize our offline and online workflows:\\
\noindent\fbox{%
\begin{algorithm}[H]
\SetAlgoLined
\caption{Offline workflow}
\KwData{Training set of domains $\{\Omega_i\}_{1\leq i \leq n}$}
\KwIn{Reference domain $\Omega_0$, tolerance $\delta^{\mathrm{geo}}>0$, step size $\gamma>0$ }
\For{$i \gets 1$ \KwTo $n$}{
    Calculate $\boldsymbol{D}^{\partial\Omega_i}_{\bId}$\;
    Initialize $\boldsymbol{\phi}_i^{(0)}=\bId$\;
    $m \gets 0$\;
    \Repeat{$\Delta_2\left( \boldsymbol{\phi}_i^{(m)}, \Omega_0, \Omega_i\right)< \epsilon$}{
        Solve for $\bu_{\Omega_i}^{(m)}$\;
        Update $\boldsymbol{\phi}_i^{(m+1)} \gets \boldsymbol{\phi}_i^{(m)} + \gamma \bu_{\Omega_i}^{(m)}\circ \boldsymbol{\phi}_i^{(m)} $ \;
        Calculate $\boldsymbol{D}^{\partial\Omega_i}_{\boldsymbol{\phi}_i^{(m+1)}}$\;
        $m \gets m + 1$\;
    }
    $\boldsymbol{\phi}_i \gets \boldsymbol{\phi}_i^{(m)}$\;
}
\textbf{POD:} $\{\boldsymbol{\phi}_i\}_{1\leq i \leq n} \rightarrow \{\boldsymbol{\zeta}_j\}_{1\leq j \leq r}, \{\alpha^i\}_{1\leq i \leq n}$ with tolerance $\delta^{\mathrm{geo}}$\;
\textbf{SVD:} $\{\boldsymbol{D}^{\partial\Omega_i}_{\bId}\}_{1\leq i \leq n} \rightarrow \{d^i\}_{1\leq i \leq n}$\;
\textbf{Train GPR :} $\{d^i\}_{1\leq i \leq n}, \{\alpha^i\}_{1\leq i \leq n} \rightarrow \mathcal R$\;
\textbf{Determine :} $\delta_{\nabla}$\;
\end{algorithm}
}
\\
\noindent\fbox{%
\begin{algorithm}[H]
\SetAlgoLined
\caption{Online Workflow}
\KwData{Reduced-order basis $\{\boldsymbol{\zeta}_j\}_{1\leq i \leq r}$, bounds: $\delta^{\mathrm{geo}}, \delta_{\nabla}$}
\KwIn{Reference domain $\Omega_0$, target domain $\widetilde{\Omega}$, maximum number of iterations $M_{\text{max}}$, step size $\gamma>0$}
\KwOut{Generalized coordinates $\widetilde{\alpha}$}
Calculate $\boldsymbol{D}^{\partial\Tilde{\Omega}}_{\text{Id}}$\;
Project $\boldsymbol{D}^{\partial\Tilde{\Omega}}_{\text{Id}}$ to obtain $\widetilde{d}$\;
Use GPR to obtain $\widetilde{\alpha}^{(0)}$\;
$m \gets 0$\;

\While{$m \leq M_{\text{max}}$}{
    $\widetilde{\alpha}^{(m+1)} \gets \widetilde{\alpha}^{(m)} - \gamma \mathcal{B}(\widetilde{\alpha}^{(m+1)})$\;
    Calculate $\boldsymbol{D}^{\partial\widetilde{\Omega}}_{\boldsymbol{\varphi}_r(\widetilde{\alpha}^{(m+1)})}$\;

\If{$\Delta_2(\boldsymbol{\varphi}_r(\Tilde{\alpha}^{(m)}), \Omega_0,\widetilde{\Omega})<\delta^{\mathrm{geo}}$ is true}{
    Terminate the loop\;
}
    $m \gets m + 1$\;
    \If{ $m=M_{\text{max}}$ \text{ and } $\Delta_2(\boldsymbol{\varphi}_r(\Tilde{\alpha}^{(M_{\text{max}})}), \Omega_0,\Tilde{\Omega})>\delta^{\mathrm{geo}}$}{
    \If{$ \|\mathcal{B}(\widetilde{\alpha}^{(M_{\text{max}})})\|_2 \geq \delta_{\nabla}$}{Increase $M_{\text{max}}$ \;}
    \Else{Increase $r$ or perform offline routine for $\widetilde{\Omega}$\;}
} 
}

\end{algorithm}
} 
\subsection{Complexity}\label{complexity}
The cost of one iteration in the offline phase comprises the assembly of the stiffness matrix associated with the bilinear form $a_{\boldsymbol{\phi}}$ defined in \eqref{inner product}, the computation of the matching term (the vector distance function \eqref{vectDistance}), the assembly of the right-hand-side vector corresponding to the linear form ($\Tilde{b}^p_{\boldsymbol{\phi}}$ and $\Tilde{b}^l_{\boldsymbol{\phi}}$ in \eqref{b1} and \eqref{b2}), and finally the resolution of the resulting sparse linear system to obtain the coordinates of the displacement field in the finite element basis.

The cost of one iteration in the online phase comprises computing the matching term (the vector distance function \eqref{vectDistance}), and the evaluation of $\mathcal{B}(\alpha)$ for $\balpha \in \mathbb{R}^r$, which corresponds to computing $r$ integrals. 

We denote by $\mathcal{N}$ the number of nodes of $\mathcal{M}_0$, the mesh of $\Omega_0$ that is used in the computation. The number of nodes on $\partial\Omega_0$ depends on the dimension of the problem, and for 2D elements, is of the order of $O(\sqrt{\mathcal{N}})$. We also denote by $p$ the number of nodes used to discretize the boundary of the target domain. 
\begin{table}[h]
\centering
\begin{tabular}{|c|c|c|} 
\hline
  & \textbf{Offline } & \textbf{Online } \\ 
\hline
\textbf{Matrix assembly} & $O(\mathcal{N}$) & - \\
\textbf{Matching term computation} & $O(\log(p)\sqrt{\mathcal{N}})$ & $O(\log(p)\sqrt{\mathcal{N}})$ \\
\textbf{Computation of the gradient} & $O(\mathcal{\sqrt{\mathcal{N}}}$) & $ O(r\mathcal{\sqrt{\mathcal{N}}})$ \\
\textbf{Linear system resolution (dense matrix)} & $ O(\mathcal{N}^3) $ & - \\
\hline
\end{tabular}
\caption{Cost of one iteration in the offline and online phases.}
\label{tab:complexity}
\end{table}

In Table~\ref{tab:complexity}, we report the complexity per iteration for the offline and online phases. The complexity of the computation of the matching term is shown for the vector distance function using the KD-tree algorithm to determine the closet point. The complexity is actually larger for the signed distance as we need to calculate also the sign for each node.

For the general case of dense matrices of size $\mathcal{N}$, the complexity of solving a linear system by a direct method is of order $O(\mathcal{N}^3)$. For sparse matrices such as the ones encountered here,  
the complexity depends on the algorithm used and the sparsity of the matrix. In our implementation, we used the LU decomposition for sparse matrices to solve the linear systems. Usually, this step is the most expensive one in the offline phase. 

The efficiency of the online phase results from the fact that we do not need to solve any linear system. Another important aspect which speeds up the computations in the online phase is the initialization step described in Section \ref{Initialization}. Because we initialize the iterative procedure close to the solution, the number of iterations needed to achieve convergence is significantly smaller than the number of iterations needed in the offline phase. Additional speed-up can be gained also from using parallel implementation to calculate the $r$ integrals in \eqref{I explicit}.

\subsection{Numerical results} \label{Numerical results online}
In this section, we present numerical results to illustrate the performance of the above offline/online algorithm.
\subsubsection{Tensile2D dataset} \label{Tensile2D online}
\textbf{Offline phase:} We adopt the same notation as in Section \ref{Tensile2D offline}. The size of the training set is $n=500$. For all $1\leq i \leq n$, we define $\Omega_i=[-1,1]^2\ \backslash B(R_i)$, with $R_i=0.2+0.6\times\frac{i}{n}$. We define the reference domain $\Omega_0:=\Omega_{250}$. This is the same reference domain as the one used in Section \ref{Tensile2D offline}. We start by calculating the morphings $\{\boldsymbol{\phi}_i\}_{1\leq i \leq n}$ using the vector distance algorithm. We recall that the parameterization is not used in the construction of the morphings. 

\begin{figure}[ht] 
\begin{center}
\includegraphics[scale=0.55]{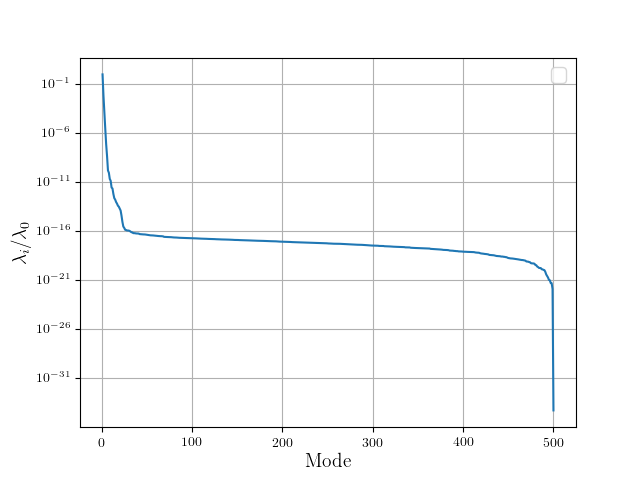}
\caption{\label{eignevalues Tensile2D} \rev{Decay of the eigenvalues of the correlation matrix for the Tensile2D dataset.}}
\end{center}
\end{figure}

\begin{figure}[ht]
    \centering
        \includegraphics[scale=0.15]{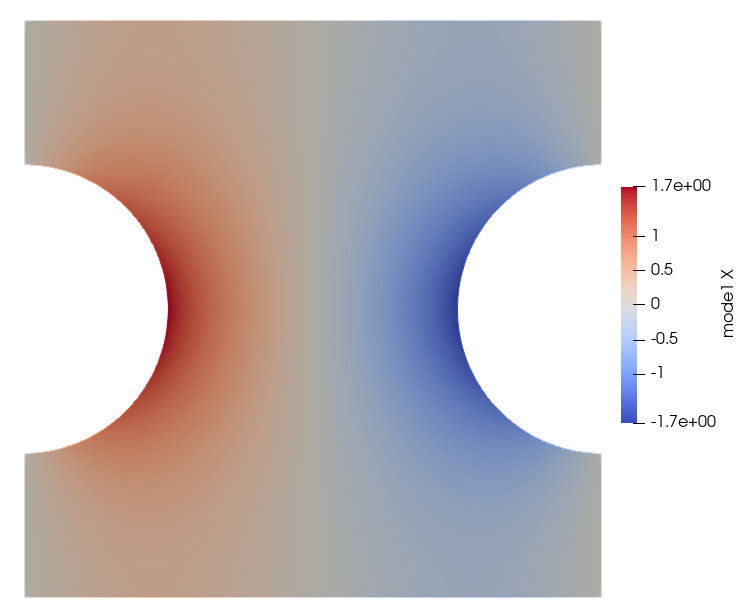} \quad
        \includegraphics[scale=0.15]{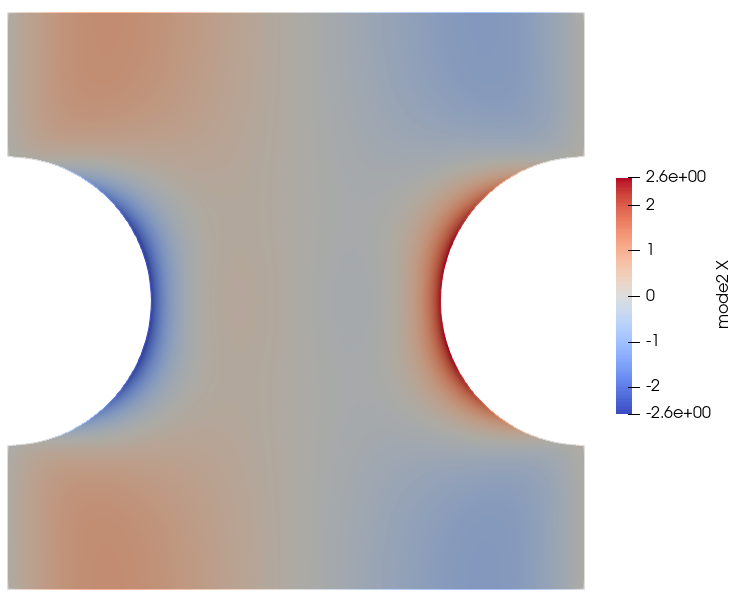} \quad
        \includegraphics[scale=0.15]{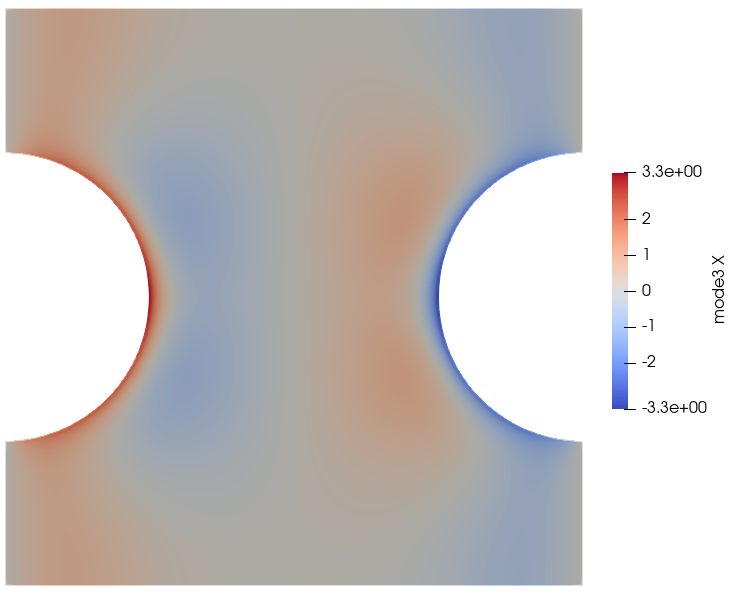} \quad
        \includegraphics[scale=0.15]{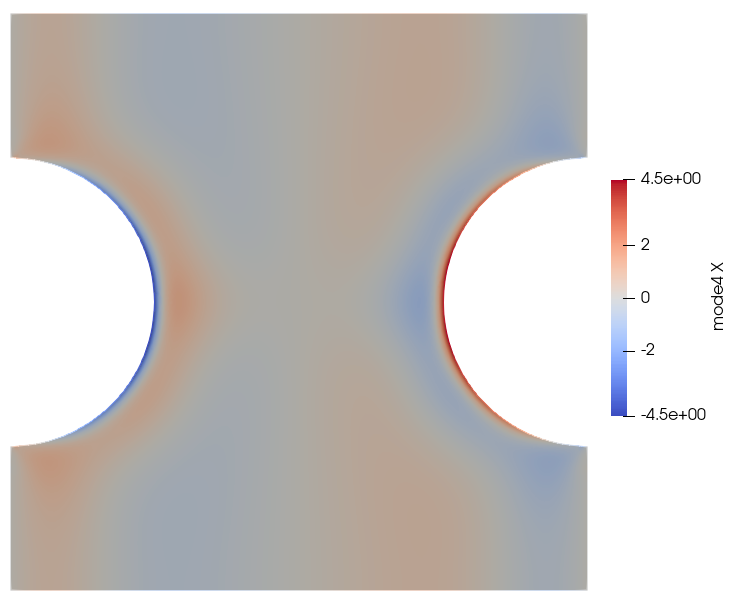} 
        \vspace{1em}

        \includegraphics[scale=0.15]{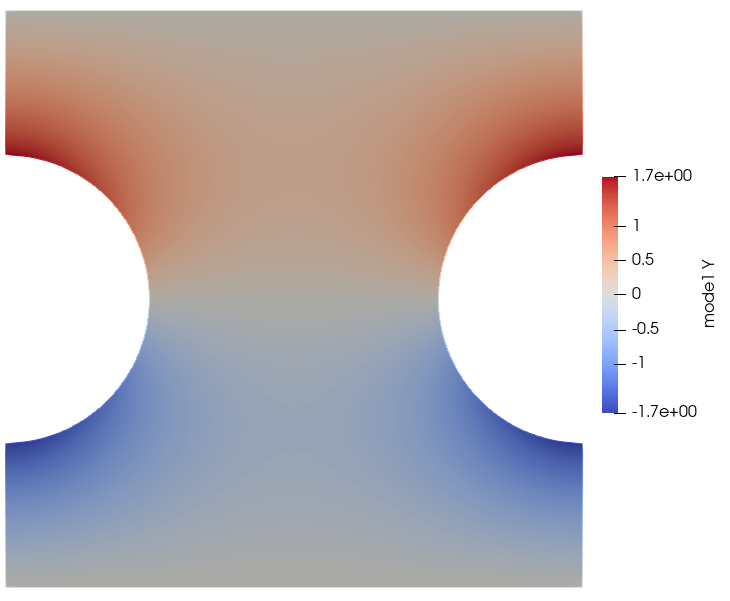} \quad
        \includegraphics[scale=0.15]{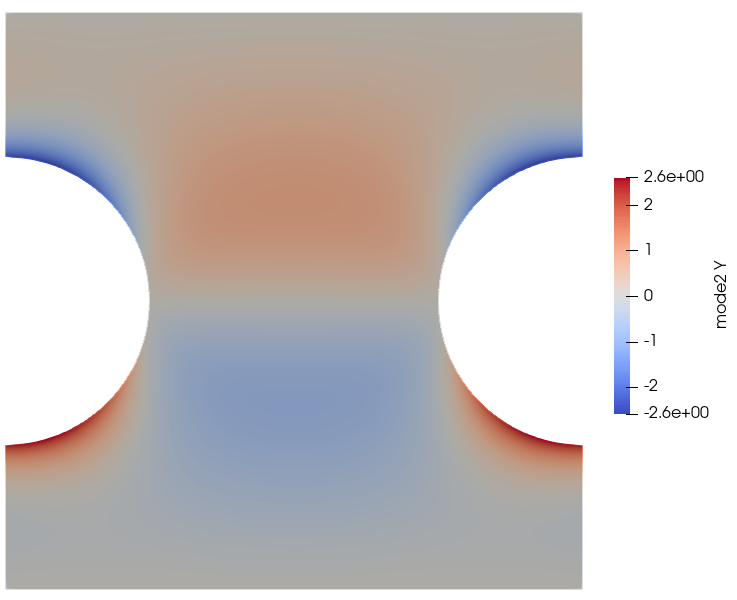} \quad
        \includegraphics[scale=0.15]{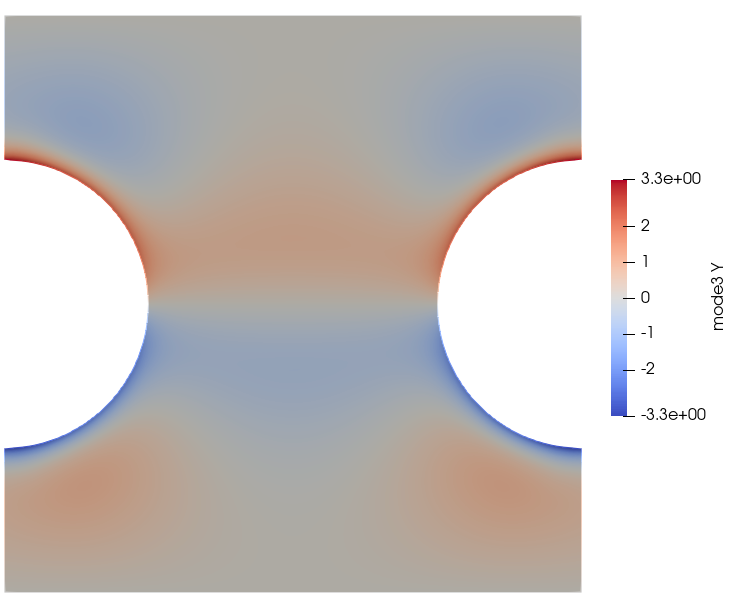} \quad
        \includegraphics[scale=0.15]{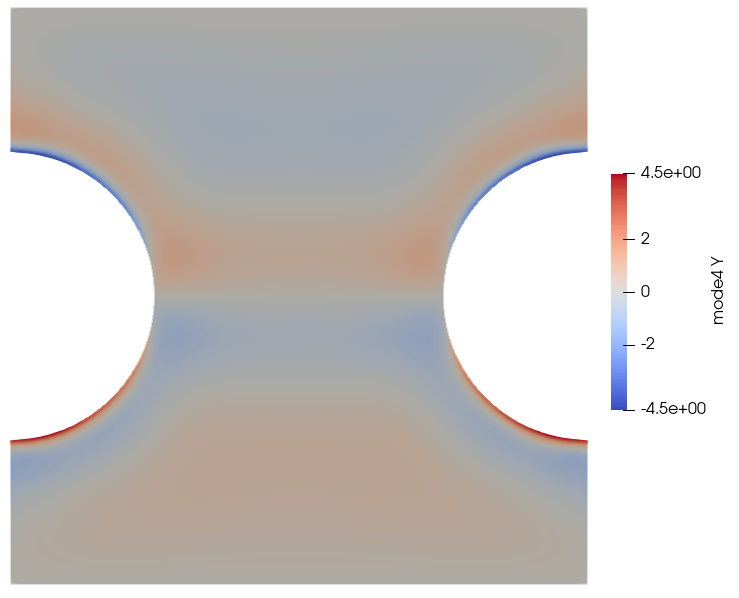} \quad
    \caption{\rev{First four POD modes for the Tensile2D dataset. First (resp., second) row: $x$- (resp., $y$-) component. From left to right: modes 1, 2, 3, 4.}}
    \label{fig:modes displacement fields}
\end{figure}

Next, we choose $\delta^{\mathrm{geo}}:=5\times10^{-4}$ \rev{(the size of the boundary elements of the morphed reference meshes is in the range $0.01$ to $0.05$)}. Employing POD with the criterion \eqref{Hausdorff criterion} leads to $r=5$ modes. \rev{In Figure \ref{eignevalues Tensile2D}, we report the decay of the eigenvalues of the correlation matrix corresponding to the displacement fields. We observe a swift decay after a few modes, but two modes are not sufficient to collect most of the energy. We show in Figure \ref{fig:modes displacement fields} the first four POD modes. As expected, the POD modes are essentially concentrated near the curved boundaries.} Finally, we train a Gaussian process regression (GPR) that takes as input the SVD coordinates of the vector distance function with $q=5$, and gives as output the generalized morphing coordinates to initialize the online optimization problem.

\textbf{Online phase:} The testing set is composed of $n_{\rm test}:=200$ geometries $\{\widetilde{\Omega}_j\}_{1\leq j\leq n_{\rm test}}$ which have the same form as the training set, that is, $\widetilde{\Omega}_j=[-1,1]^2\ \backslash B(\Tilde{R}_j)$ for some (supposedly unknown) radius $ \Tilde{R}_j$. All the radii $\Tilde{R}_j$ are different from those of the training set. For each $\widetilde{\Omega}_j$, we use the vector distance function in the regression model to predict the initial iteration to the online optimization problem. In Table \ref{tab:Tensilde2D_online_Delta2}, we report the quantities 
\begin{align*}
  &\Delta^{\mathrm{geo}}_{\rm avg}(r,N)\displaystyle := \frac{1}{N}\sum_{i=1}^N \Delta_2(\boldsymbol{\varphi}_r(\alpha^i)(\Omega_0),\widetilde{\Omega}_i),\\
  &\Delta_{\rm max}^{\rm geo}(r,N) := \max_{1\leq i\leq N} \{\Delta_2(\boldsymbol{\varphi}_r(\alpha^i)(\Omega_0),\widetilde{\Omega}_i)\}, \\
  &\Delta_{\rm min}^{\rm geo}(r,N) := \min_{1\leq i\leq N} \{\Delta_2(\boldsymbol{\varphi}_r(\alpha^i)(\Omega_0),\widetilde{\Omega}_i)\},
\end{align*}
for $r=5$ and $N= n_{\rm test}$. As observed in Table \ref{tab:Tensilde2D_online_Delta2}, for all the samples in the test set, the initialized solution here is an optimal one that satisfies the stopping criterion, so the online iterative procedure is not used. Thus, the cost of each morphing calculation is only one evaluation of the vector distance function and one evaluation of the GPR which drastically cuts down the cost of morphing computation. In Table \ref{tab:Tensilde2D}, we report the ratio of the average (resp., maximum) time needed to compute the high-fidelity morphing (offline) over the average (resp., maximum) time needed to compute the reduced-order morphing (online) using our implementations. We observe that the reduced-order model we propose is about 270 times faster than the high-fidelity one. The steps required to construct the online phase model are morphing computation, morphing POD, vector distance function POD and GPR training. The time required to construct the online phase model is dominated by the morphing computation.
\begin{table}[h]
\centering
\begin{tabular}{|c|c|c|} 
\hline
$\Delta^{\mathrm{geo}}_{\rm avg}(r,n_{\rm test})$ & $\Delta^{\mathrm{geo}}_{\rm max}(r,n_{\rm test})$& $\Delta^{\mathrm{geo}}_{\rm min}(r,n_{\rm test})$ \\
\hline
 $1.5\times 10^{-4}$   & $3.6 \times 10^{-4}$  &   $5.2\times 10^{-7}$   \\
\hline
\end{tabular}
\caption{Average, maximum, and minimum values of the criterion $\Delta_2$ for all the samples from the dataset in the online phase after the initialization.}
\label{tab:Tensilde2D_online_Delta2}
\end{table}

\begin{table}[h]
\centering
\begin{tabular}{|c|c|} 
\hline
\textbf{Ratio of average time (offline/online)} & 267.1  \\
\hline
\textbf{Ratio of maximum time (offline/online)} & 269.6  \\
\hline
\end{tabular}
\caption{Ratio of average and maximum time to compute the morphing in the offline and online phases for the Tensile2D dataset.}
\label{tab:Tensilde2D}
\end{table}

\subsubsection{AirfRANS} \label{AirfRANS online}
\textbf{Offline phase.} For this test, we use all the 1000 airfoils in the AirfRANS dataset. The number of samples in the training set is $n:=800$. We use the same reference geometry, mesh and physical parameters as in Section \ref{AirfRANS offline}. After calculating each morphing, we apply POD on the displacement fields to obtain the principal modes of the displacements. \rev{The eigenvalues of the correlation matrix of the displacement field are plotted in Figure~\ref{eignevalues AirfRANS}. The decrease is not as swift as for the Tensile2D dataset. In particular, we observe that 
more than a couple of modes are necessary to capture most of the energy (typically, about 50).}
Finally, we train the GPR to use it to predict an initial iteration for the samples in the testing set.

\begin{figure}[ht] 
\begin{center}
\includegraphics[scale=0.55]{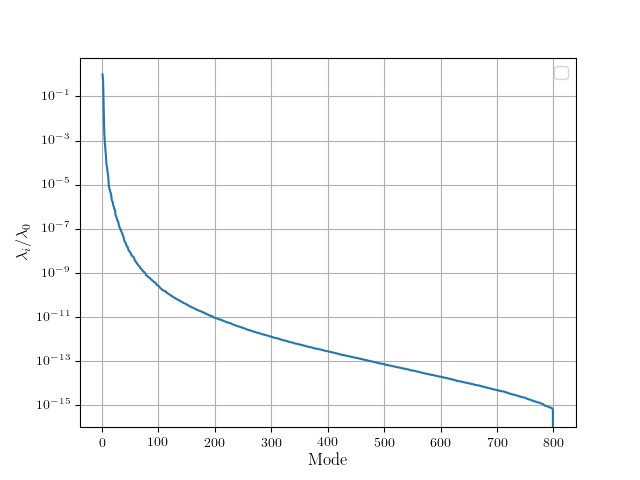}
\caption{\label{eignevalues AirfRANS} \rev{Decay of the eigenvalues of the correlation matrix for the AirfRANS dataset.}}
\end{center}
\end{figure}

\textbf{Online phase.} The testing set is composed of the remaining $n_{\rm test}:=200$ samples.
Here, the initialization of the morphing is not sufficient to satisfy our criterion on the error, so that we also use the online optimization strategy. 
\begin{figure}[ht] 
\begin{center}
\includegraphics[scale=0.5]{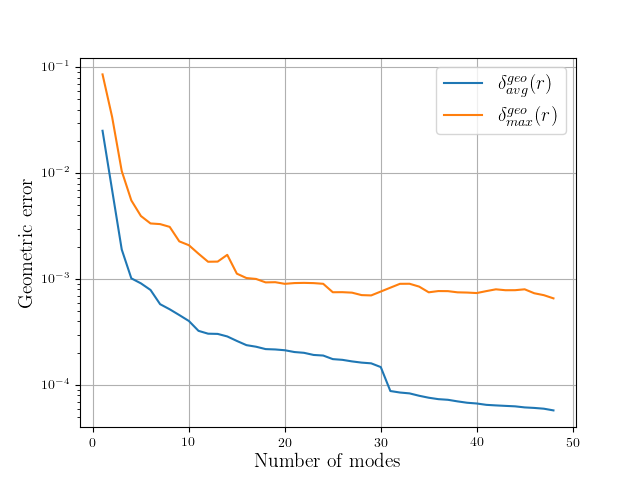}
\caption{\label{Geometric_error_func_modes} AirfRANS dataset: Evolution of the geometrical errors $\Delta^{\mathrm{geo}}_{\rm avg}(r,n)$ and $\Delta_{\rm max}^{\rm geo}(r,n) $ (in logarithmic scale) for the training set as a function of the number of modes $r$. The errors are not equal to zero owing to the POD truncation error.}
\end{center}
\end{figure} 

In Figure \ref{Geometric_error_func_modes}, we report both the average and maximum geometrical errors in the training set as a function of the number of modes. As expected, both errors tend to zero as we add more modes for morphing reconstruction. Note, however, that the convergence process is not monotone. This is due to the fact that additional modes actually can have the effect of better approaching the morphing field $\boldsymbol{\phi}_i$, and not necessarily minimizing the geometrical error. Obviously, taking all the modes produces zero error between $\boldsymbol{\phi}_i$ and $\boldsymbol{\varphi}_r(\alpha^i)$, and, as a result, zero geometrical error.

We test the morphing strategy for $r \in \{12,16,20,24,28,32,48\}$. For each value, we re-initialize $\Tilde{\alpha}^{(0)}$ in $\R^r$ for each sample in the testing set and perform the optimization in $\R^r$. For all the values of $r$, we fix the same geometric tolerance $\delta^{\mathrm{geo}}=1.5\times 10^{-4}$ \rev{(which is consistent with the values reported in Figure~\ref{Geometric_error_func_modes} for $r=32$ modes). Notice also that the size of the boundary elements of the morphed reference meshes is of the same order.}. In Figure \ref{converged_samples}, we show the number of samples for which convergence is achieved, i.e., satisfying $\Delta_2(\boldsymbol{\varphi}_r(\Tilde{\alpha}^{(m)})(\Omega_0),\Tilde{\Omega})<\delta^{\mathrm{geo}}$, as a function of the number of iterations. Here, zero iteration means that the initialized solution is sufficiently close so that further optimization is not needed.  \rev{In the left panel of Figure \ref{fig:converged_samples_48modes}, we report the number of samples that converged at the first iteration, whereas, in the right panel, we present an histogram of the number of samples that converged at each iteration for $r=48$.}

\begin{figure}[ht] 
\begin{center}
\includegraphics[scale=0.34]{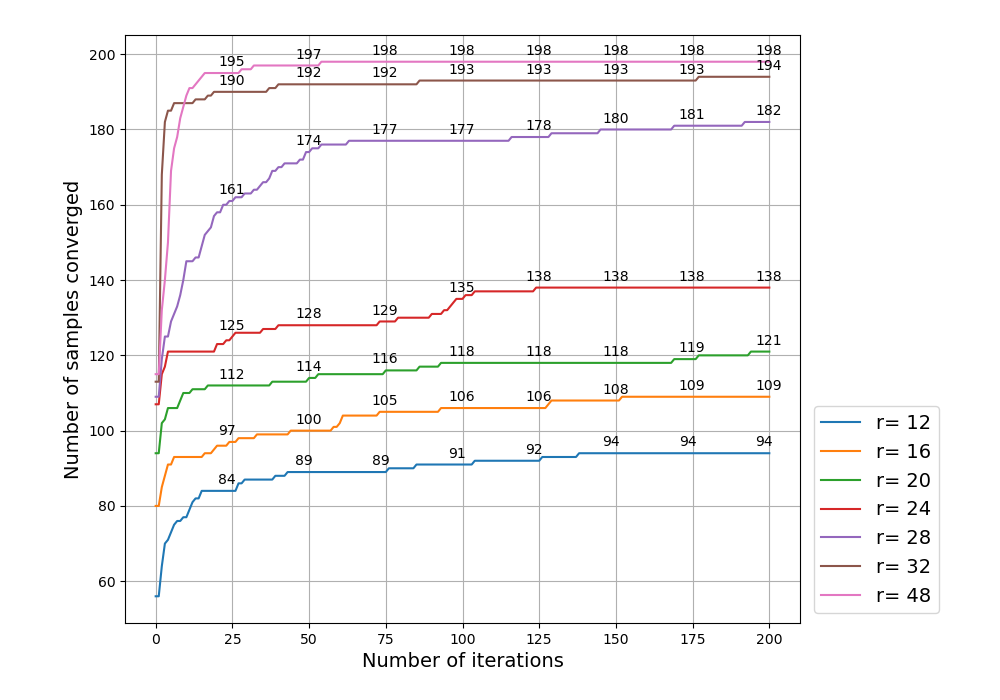}
\caption{\label{converged_samples} AirfRANS dataset: Number of converged samples as a function of the number of iterations for different values of $r$.}
\end{center}
\end{figure}

\begin{figure}[ht] 
     \centering
     \begin{subfigure}[b]{0.449\textwidth}
         \centering
        \includegraphics[scale=0.34]{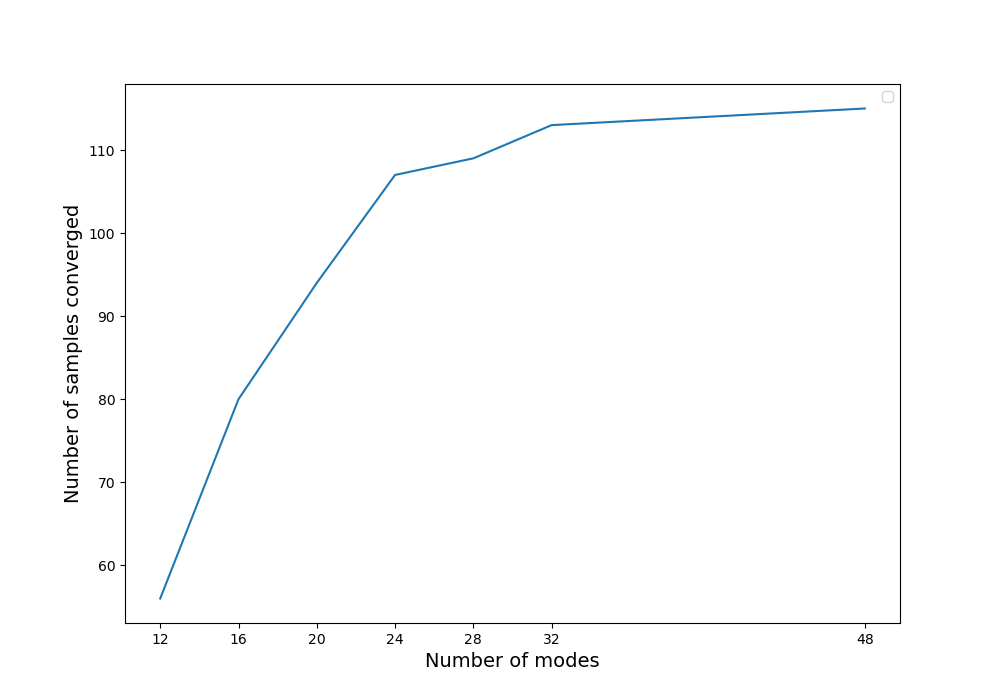}
        \caption{\rev{Convergence in one iteration.}}
     \end{subfigure}
     \hfill
     \begin{subfigure}[b]{0.449\textwidth}
         \centering
\includegraphics[scale=0.34]{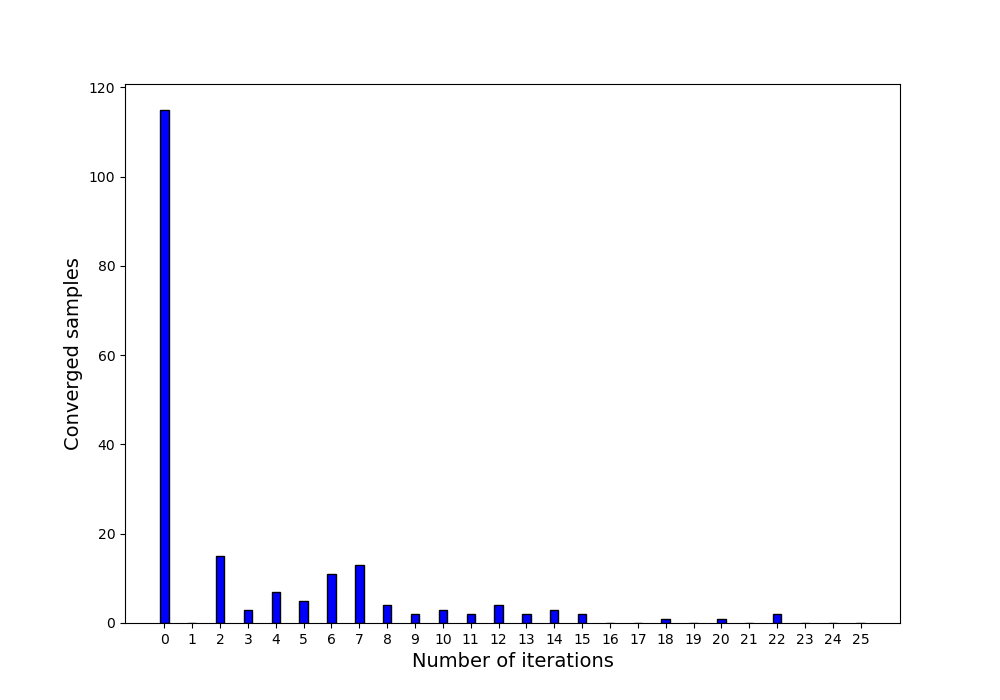}
     \caption{\rev{Converged samples at each iteration using $r=48$ modes.}}
     \end{subfigure}
\caption{\label{fig:converged_samples_48modes} \rev{AirfRANS dataset: Number of converged samples}}.
\end{figure}

When using more modes, the cost per iteration is higher but the convergence is achieved in fewer iterations, so that the overall cost to convergence is actually significantly lower. In Figure \ref{time_naca_online}, we report the time needed to compute all the morphings in the test set for the different values of $r$. 
As we can see, increasing the number of modes allows for faster convergence.

\begin{figure}[ht] 
     \centering
     \begin{subfigure}[b]{0.49\textwidth}
         \centering
        \includegraphics[scale=0.38]{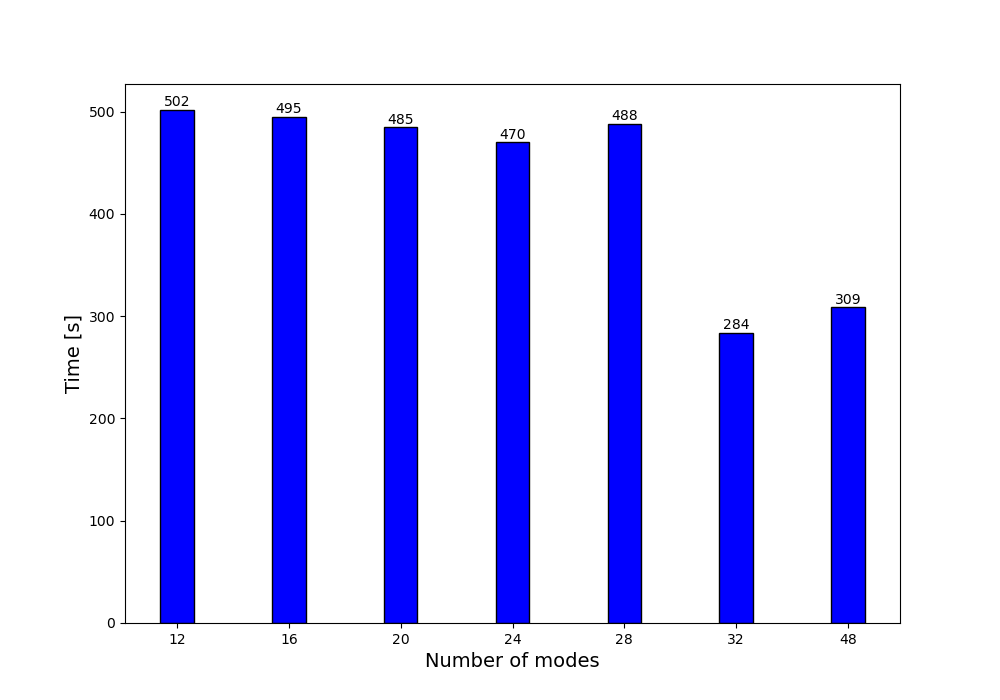}
     \caption{Time to converge with $M_{\rm max}=25$ iterations. }
     \end{subfigure}
     \hfill
     \begin{subfigure}[b]{0.49\textwidth}
         \centering
\includegraphics[scale=0.38]{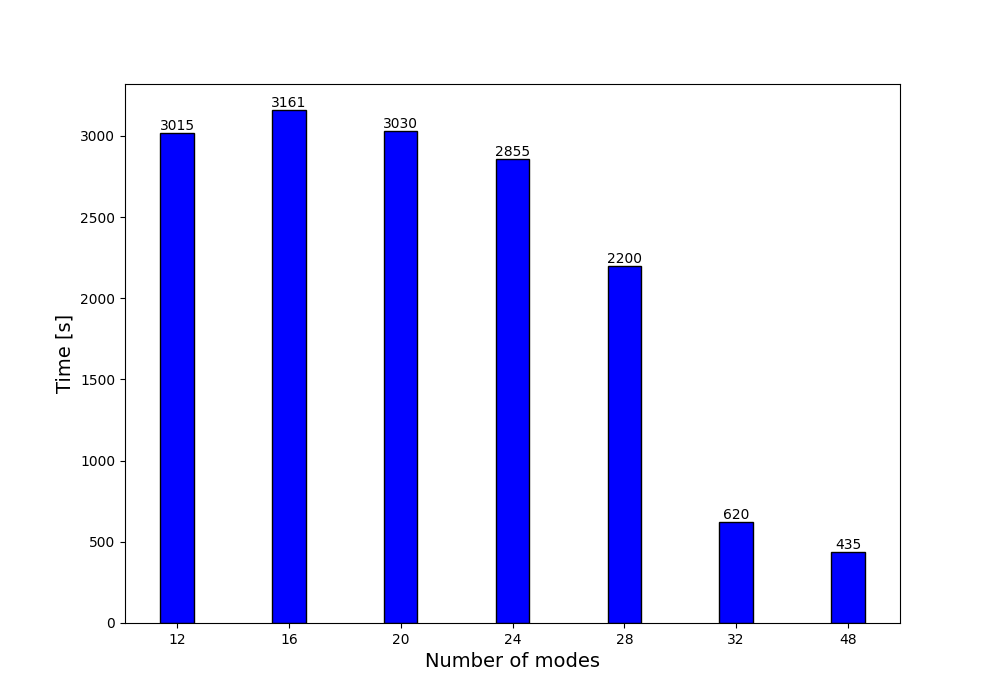}
\caption{ Time to converge with $M_{\rm max}=200$ iterations.}
     \end{subfigure}
\caption{\label{time_naca_online} Overall time needed to compute all the morphings for different values of $r$ and for the maximum number of iterations $M_{\rm max}$. } 
\end{figure}

\FloatBarrier

 \section{Learning scalar outputs from simulations}\label{learning scalars}

In this section, we show numerical results to illustrate how the above morphing strategy can be exploited to build regression models to predict scalar outputs from physical simulations under non-parameterized geometrical variability. This approach is physics-agnostic, that is, the physical equations do not play a role in the process.

\subsection{Methodology}
Let $\{\Omega_i\}_{1\leq i \leq n}$ to be a collection of different geometries. Each geometry is equipped with a (non-geometrical) parameter $\mu_i \in \mathcal P$ where $\mathcal P \subset \mathbb{R}^p$ is a set of parameter values that is used to perform the physical simulations. The parameters can be boundary conditions, material properties and so on; however, we emphasize here that the parametrization of the geometries is not known. In this context, the objective is to determine the outputs of interest of the physical problem, which consist of:
\begin{enumerate}
    \item The physical fields $U_i:=(u_{i,j})_{1\leq j \leq n_{\rm fields}}$ with $u_{i,j}: \Omega_i \to \R$ for all $1\leq i \leq n$ and $1\leq j \leq n_{\rm fields}$ with $n_{\rm fields}\in \mathbb{N}^*$. These fields are usually solutions to a set of partial differential equations. For example, depending on the problem, these can be stress, deformation, velocity, pressure, etc...
    \item The scalar outputs $W_i:=(w_{i,j})_{1\leq j \leq n_{\rm scalars}}$ for all $1\leq i \leq n$ and $1\leq j \leq n_{\rm scalars}$ with $n_{\rm scalars}\in \mathbb{N}^*$. Examples of scalar quantities of interest are the drag and lift coefficients.
\end{enumerate}
Here, we restrict ourselves to the prediction of scalars outputs.
Given the set of input pairs $(\Omega_i,\mu_i)_{1\leq i \leq n}$, and outputs $(W_i)_{1\leq i \leq n}$, calculated using a high-fidelity model, our goal is to learn a mapping $\mathcal{W}$ which maps a pair $(\Omega, \mu)$, where $\Omega$ is a subdomain of $\mathbb{R}^d$ and $\mu\in \mathcal P$ is a parameter value, to the corresponding output $W \in \mathbb{R}^{n_{scalars}}$ so that $W = \mathcal W(\Omega, \mu)$.

Because the geometries are not parameterized, the only available information that represents each geometry is its mesh $\mathcal{M}_i$. However, the learning task on meshes can be quite challenging owing to the high number of degrees of freedom that should be taken as input. To deal with large meshes, solutions using deep neural network architectures are the most popular of machine learning techniques \cite{brunton2020machine,willard2022integrating}. Furthermore, recent advances rely on graph neural networks \cite{scarselli2008graph} as they can overcome the limitation of having graph input with different numbers of nodes \cite{pfaff2020learning}. In \cite{perez2024gaussian}, the authors propose a method that does not rely on neural network architecture, and uses Gaussian process regression model based on the sliced Wasserstein--Weisfeiler--Lehman kernel between graphs to deal with variable geometry to predict scalar outputs. 

Instead, we propose here to consider the offline/online morphing technique described above. We proceed as follows:
\begin{enumerate}
    \item In the offline phase, given the input pairs $(\Omega_i,\mu_i)_{1\leq i \leq n}$ and the outputs $(W_i)_{1\leq i \leq n}$: \begin{enumerate}
        \item Choose a reference domain $\Omega_0$ and calculate the morphings $\boldsymbol{\phi}_i:  \Omega_0 \to \Omega_i$ (Section \ref{Section Offline}).
        \item Apply the snapshot-POD on $(\boldsymbol{\phi}_i)_{1\leq i \leq n}$, and calculate the generalized coordinates $\alpha^i \in \R^r$ for each geometry (Section \ref{Section Online}).
        \item Train the regression model $\mathcal{W}$:
        \begin{align}
            \R^r \times  \mathcal P \ni (\alpha , \mu) \mapsto \mathcal{W}(\alpha,\mu) \in \R^{n_{\rm scalars}}.
        \end{align}
    \end{enumerate}
    Notice that, each geometry is parameterized by the coordinates of the POD modes of the displacement field $\boldsymbol{\phi}_i - \bId$.
    \item In the online phase, given a new pair $(\widetilde{\Omega},\widetilde{\mu})$: \begin{enumerate}
        \item Calculate the vector $\widetilde{\alpha}\in \mathbb{R}^r$ that corresponds to the morphing $\boldsymbol{\varphi}_r(\Tilde{\alpha}) \in \boldsymbol{\mathcal T}_{\Omega_0}$ corresponding to the target domain $\widetilde{\Omega}$ (Section \ref{Section Online}).
        \item Use the regression model to obtain the scalar outputs $\widetilde{W}=\mathcal{W}(\widetilde{\alpha},\widetilde{\mu})$.
    \end{enumerate}
\end{enumerate}
We use a Gaussian process regression for the learning task. For the training phase, we employ anisotropic Matern-5/2 kernels and zero mean-functions for the priors, and the training is done using the GPy package \cite{gpy2014}.

The proposed strategy is similar to the MMGP method from \cite{casenave2024mmgp}. The two main differences are: (i) the morphing algorithm used here is more versatile and is not tailored to specific cases; (ii) the increased efficiency of the present method owing to the offline-online separation of the morphing algorithm. Moreover, the present method computes morphings (and thus displacement fields) from the reference domain, eliminating the need for some finite element interpolation of the displacement fields to a common support in order to apply the snapshot-POD as in MMGP (where morphings are computed towards the reference domain).

\subsection{AirfRANS: drag coefficient prediction}
We apply the above methodology to the AirfRANS dataset. In addition to a mesh of the NACA profile, each sample in the AirfRANS dataset has two scalars as input: the inlet velocity $v_0$ and the angle of attack $\theta_0$. The outputs of the physical simulation are the velocity, pressure and dynamic viscosity fields, as well as the drag and lift coefficients. The outputs are obtained using a 2D incompressible RANS model.

We focus our attention here on learning the drag coefficient $C_d$ from the inputs $\mu=(v_0,\theta_0)$ and the geometry $\Omega$. The Gaussian process regression model $\mathcal{W}$ takes as input the morphing generalized coordinates $\alpha^i$ and the physical parameters $\mu_i=(v_0^i,\theta_0^i)$, and gives as output the drag $C^i_d$. To see the effect of the number of modes and the stopping criterion on the precision of the prediction, we perform the following tests:
\begin{enumerate}
    \item Test 1: we compute the morphings online using $r$ modes for $r\in \{12,16,20,24,28,32,36,40,44,48\}$ (including the initial solution prediction). We use the calculated coordinates to predict $C_d$. We use the same stopping geometrical criterion as above, $\delta^{\mathrm{geo}}:=1.5\times10^{-4}$, for the different values of $r$. We stop the optimization after $M_{max}=200$ iterations if the algorithm does not converge.
    \item Test 2: we compute the morphings using $r'=48$ modes, but we take only the first $r$ coordinates to perform the prediction, with $r \in \{12,16,20,24,28,32,36,40,44,48\}$. In this case, each morphing $\boldsymbol{\varphi}_{r'}(\alpha^i)$ is calculated once, but the number of used components of $\alpha^i$ changes. We use the same tolerance $\delta^{\mathrm{geo}}$ and maximum number of iterations $M_{max}$ as test 1.
    \item Test 3: similar to Test 2, but with $r'=64$ modes.
    \item Test 4: as in Test 2, we take $r'=48$ modes. But we change the stopping criterion: we perform a fixed number of iterations for all the samples regardless of the geometrical error. We choose here to perform 25 iterations for all samples. 
\end{enumerate}
\begin{figure}[ht]
\begin{center}
\includegraphics[scale=0.55]{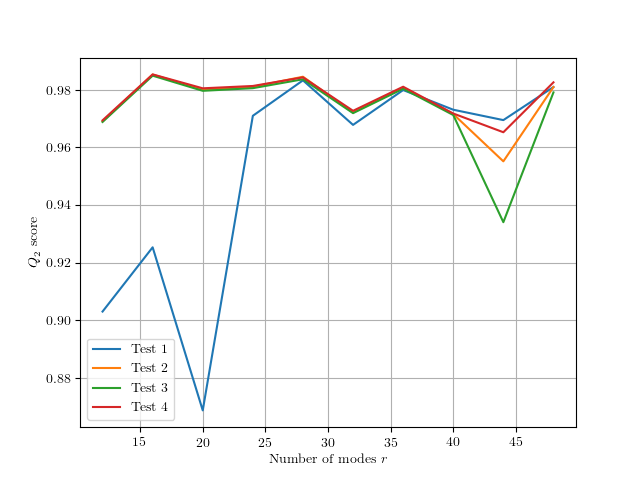}
\caption{\label{Q_2} $Q^2$ scores (see \eqref{Q2 definition}) for different values of $r$. }
\end{center}
\end{figure}

We notice that for different values of $r$, the regression model $\mathcal{W}_r$ changes (we use the subscript $r$ to indicate this). However, the model does not change for a given value of $r$ over the different tests. All the models $\mathcal{W}_r$ are trained once in the offline phase and used for the different tests. 

To evaluate the performance of the method to predict the drag coefficient $C_d$, we evaluate, for each test and for each value of $r$, the $Q^2$-score defined as:
\begin{align}\label{Q2 definition}
Q^2 := 1 - \frac{\displaystyle \sum_{i=1}^{n_{\rm test}}(y_i - f_i )^2}{\displaystyle \sum_{i=1}^{n_{\rm test}}(y_i -\Bar{y} )^2},   
\end{align}
with $y_i$ the true values of the drag $C_d$, $f_i$ the predicted values using the model $\mathcal{W}$, and $\Bar{y}:=  \displaystyle \frac{1}{n_{\rm test}} \sum_{i=1}^{n_{\rm test}} y_i $ the mean. In the best-case scenario, the score is $Q^2=1$, which means that the model predicts correctly all the true values.
 
In Figure~\ref{Q_2}, we present the various $Q^2$ scores. The main observation is that using more modes ($r'$ modes) to calculate the morphing can be beneficial when using models that take $r$ ($r < r'$) mode coefficients as inputs. For instance,  calculating the morphing with $48$ modes but utilizing only the first 16 components of $\alpha$ to predict the values of $C_d$ with $\mathcal{W}_{16}$ yields a superior $Q^2$ score compared to calculating the morphing using only 16 modes and using the obtained coordinates $\mathcal{W}_{16}$ to predict $C_d$. Thus, employing more modes to calculate the morphings enhances the quality of the coefficients for the prediction.
 
From the conducted tests, our best $Q^2$ score is obtained for Test 4 when using 16 modes for the prediction, with $Q^2=0.9853$. A similar result is obtained for Test 2 using also 16 modes for the prediction, with $Q^2=0.9852$. In comparison with the results shown in \cite{casenave2024mmgp}, both results surpassed the scores obtained using, for the same dataset, MMGP ($Q^2=0.9831$), a graph convolutional neural network GCNN \cite{scarselli2008graph} ($Q^2=0.9596$), and MeshGraphNets (MGN) \cite{pfaff2020learning} ($Q^2=0.9743$). 

\section{Conclusion}\label{Conclusion}

We presented a new method to construct morphings between geometries that share the same topology. The technique is suitable to model-order reduction with non-parameterized geometries, as it does not suppose any knowledge of a parameterization of the geometries. In the offline phase, morphings are constructed using elastic deformations from a reference domain onto a target domain. \rev{The approach shares similarities with the method proposed in \cite{de2016optimization}, but also adds the ability of matching of points and lines at the boundary of the target geometry}. In the online phase, morphings are \rev{computed directly in the POD basis} as the solution to a low-dimensional fixed-point iteration problem\rev{, and the algorithm can detect geometries that are out of distribution.}
 
We provided numerical examples \rev{in 2D} to show the performance of the proposed method. \rev{First, in the offline phase, the vector distance algorithm was shown to be more efficient than the signed distance algorithm, both in terms of computation time and convergence. Second, the results of Section \ref{Section Online} showed that, with the proposed initialization step, the online morphing computation is very fast, reaching time ratios between the online and offline phases of the order of 300. This is crucial in a reduced-order modeling.} Third, we illustrated how the computed morphings can be used to predict scalar quantities in physical problems using Gaussian process regression models. \rev{The results were shown to outperform state-of-the-art methods, achieving the best $Q^2$-score.} 
 
\rev{Among several possibilities, we outline two main directions for future work. First, the extension of the method to 3D. While the principles of the algorithm remain unchanged, it deserves to be extensively tested numerically in 3D. Second, the devising of optimal morphing strategies with the objective of minimizing the number of modes to represent new geometries. Even more interestingly, one can aim at minimizing the number of modes with the combined goal of representing new geometries and physical fields on these geometries.}

\section*{Acknowledgements}

Funded/Co-funded by the European Union (ERC, HighLEAP, 101077204). Views and opinions expressed are however those of the author(s) only and do not necessarily reflect those of the European Union or the European Research Council. Neither the European Union nor the granting authority can be held responsible for them.

\newpage
\bibliographystyle{plain}
\bibliography{bib.bib}

\end{document}